\newcommand{\ang}{\sphericalangle}
\newcommand{\Act}{{\rm Act}}
\newcommand{\MCG}{{\rm MCG} (\Sigma)}
\newtheorem{thm}{Theorem}
\newtheorem{theo}[thm]{Theorem}
\newtheorem{defi}[thm]{Definition}
\newtheorem{prop}[thm]{Proposition}
\newtheorem{lemma}[thm]{Lemma}
  \def\command@factory#1{%
    \expandafter\def\csname cal#1\endcsname{\mathcal{#1}}
    \expandafter\def\csname bb#1\endcsname{\mathbb{#1}}
    \expandafter\def\csname rm#1\endcsname{\mathrm{#1}}
  }
\newcommand{\G}{\Gamma}
\newcommand{\g}{\gamma}
\newcommand {\onto} {\twoheadrightarrow}
\title{The normal closure of  big Dehn twists, and  plate
  spinning with rotating
  families }
\author{Fran\c{c}ois Dahmani}
\date{}
\begin{document}
\maketitle
\begin{abstract} We study the normal closure of a big power of one or several Dehn
  twists in a Mapping Class Group.  We prove that it has a 
  presentation whose relators consist only of commutators between twists of
  disjoint support, thus answering a question of Ivanov.
  Our method is to use  the theory of projection
  complexes of Bestvina Bromberg and Fujiwara, together with the theory
  of  rotating families, simultaneously on several spaces.  
\end{abstract}
\setcounter{tocdepth}{2}
\tableofcontents

\section*{Introduction}

Consider a closed orientable surface $\Sigma$ of negative Euler
characteristic. 
 The Mapping Class Group of $\Sigma$, denoted by $\MCG$, is the quotient of the group
of orientation-preserving homeomorphisms by the path-connected component
of the identity. A classical theorem of Dehn and Nielsen indicates a
natural isomorphism between this
group and a subgroup of index $2$ of the outer automorphism group of $\pi_1(\Sigma)$.

As Riemann uniformisation theorem makes  
$\pi_1(\Sigma)$ act  as a lattice on the hyperbolic plane, one can argue that $\MCG$ is (in a sense) 
some hyperbolic
analogue of $SL_2(\bbZ)$ which is of index $2$ in
the automorphism group of $\bbZ^2$, a lattice in the euclidean plane. 

However, contrarily to $SL_2(\bbZ)$, some nontrivial elements of $\MCG$ have large centraliser. For instance, consider
a simple closed curve $\alpha$ on $\Sigma$, a tubular neighborhood of
it $\alpha^{(t)} \simeq  [-\epsilon, \epsilon] \times \alpha
\hookrightarrow \Sigma$ and define a (simple) Dehn twist
$\tau$ as the identity in $\Sigma \setminus \alpha^{(t)}$, and as a
full twist on $\alpha^{(t)}$, namely, identifying $\alpha$ with $S^1$,
the map $[(\eta, e^{i\theta}) \mapsto (\eta, e^{i(\theta +
  \frac{(\eta+\epsilon)\pi}{\epsilon } )} )]$.  A Dehn twist will
obviously commute with any mapping class whose support is disjoint
from this tube, and therefore with a lot  of other Dehn twists.  By
a theorem of Dehn,  $\MCG$   is generated by Dehn twists around simple closed
curves, thus by an intricate set of generators linked by
commutation relations, but also braid relations and lantern
relations. 
These differences can lead to modify the expected analogy with
the euclidean case in order to include  $SL_n(\bbZ)$ for $n\geq
3$ (generated by elementary matrices). 

 Thurston, and Nielsen, (see the discussion and references in \cite{HT}) classified  mapping classes into three
cases, those of finite order, those that are reducible in the sense
that they have infinite order and that some nontrivial power  preserves the homotopy class of a simple
closed curve, and finally the pseudo-Anosov. The pseudo-Anosov mapping
classes happen to be
the hyperbolic isometries of an action of $\MCG$ on an important
graph, the curve graph of $\Sigma$, which is Gromov hyperbolic \cite{MM}.  They
are, 
in many ways, the witnesses that some phenomena of rank one
happen in $\MCG$  that
are similar to the structure of $SL_2(\bbZ)$, and its action  on the modular tree. On the
other hand, Dehn twists are as reducible as it is possible to be. They
are, or should be,  the witnesses of some phenomena of higher rank,
similar to the structure of $SL_n(\bbZ)$ for $n\geq
3$.

Here is an illustration of the difference of behaviors. If one considers a finite collection of pseudo-Anosov
elements, one can  show that, after taking suitable powers,  the group
they generate is free \cite{Ifree, McC}. This is a  ping-pong argument, for instance on
the boundary of Teichm\"uller space, or on the curve graph. If one considers a finite collection of Dehn
twists around simple closed curves, then Koberda \cite{K} proved the
beautiful ping-pong result that the group generated by some powers of these Dehn
twists is a right angled Artin group: a group whose presentation over
the given generating set is a collection of commutators, the obvious
ones (two Dehn twists commute if their curves are disjoint).

The case of normal subgroups is our interest.
If $n\geq 3$, by Margulis' normal subgroup
theorem, all normal subgroups of $SL_n(\bbZ)$  are finite or of finite
index. In $SL_2(\bbZ)$ it is not the case: this group is virtually
free, and has uncountably many non-isomorphic quotients.

  It is a
natural question to ask whether (and how) these phenomena are seen in
$\MCG$. What can be the normal closure of a power of a pseudo-Anosov,  the normal
closure of a power of a  Dehn twist, and the group generated by all $k$-th powers
of all simple Dehn twists~?  Farb and Ivanov 
asked this question in the case of a pseudo-Anosov (respectively
\cite[\S 2.4]{F} and \cite[\S 3]{I}),  attributing it
 to Long, McCarthy, and Penner.  Ivanov also asked what he calls the deep
relation question \cite[\S 12]{I}, that is  
whether  all relations among certain powers  of Dehn twists must derive from
obvious commutation relations.

In \cite[\S 5]{DGO}, we answered the first question: there is an integer $N=N(\Sigma)$
such that for any pseudo-Anosov mapping class $\gamma$, the normal
closure $\langle\langle \gamma^N\rangle\rangle_{\MCG}$ is free, and
consists only of pseudo-Anosov elements and the identity. This
is in line with what happens in $SL_2(\bbZ)$, for each infinite order element.

We are interested in the question of the closure of a power of a Dehn twist, and
in the group generated by certain powers of all (simple) Dehn twists, as
in Ivanov's deep relation problem. A
naive expectation along the lines of the analogy with  $SL_n(\bbZ)$,
and the Margulis normal subgroup theorem, could be to expect such 
normal subgroups to be a finite
index subgroup. Whereas it is the case for squares of Dehn twists
\cite{H},  it is not the case for large powers (see \cite{H}, \cite{Fu},
\cite[6.17]{Cou}, see also \cite{S} and \cite{Mas} for the case of
powers of half-Dehn twists on punctured spheres). 
Another
expectation could be,   in light of the finite-type situation, and
ping pong arguments, to expect  infinitely generated right angled
Artin groups. Again, this is not the case in general (see \cite{CLM}
and \cite{BM}; Brendle and Margalit proved restrictions on the
automorphism group of certain of these normal subgroups, that
forbid them to be  right angled Artin groups). However, we indeed
prove that there is no need of  relations other than the obvious ones.

\begin{theo}\label{theo;intro}

For every orientable closed surface $\Sigma$, there is an integer $N_0$ such that for any $N$ multiple of $N_0$:

\begin{itemize}\item for any Dehn twist $\tau$, the normal
closure of  $\tau^N$ in the Mapping
Class Group of $\Sigma$  has a partially
commutative presentation, built on an infinite set of generators that are
conjugates of $\tau^N$, so that the relators are commutations
between pairs of conjugates of  $\tau^N$ that have disjoint underlying
curves. 

\item the group generated by all $N$-th powers of all simple Dehn
  twists has a partially
commutative presentation,  built on an infinite set of generators that
are $N$-th powers of Dehn twists, and whose relators are commutations
between pairs of conjugates of the generators that have disjoint underlying
curves. 
\end{itemize}

\end{theo}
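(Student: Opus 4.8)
The plan is to exhibit $H=\langle\langle\tau^N\rangle\rangle_{\MCG}$ as the normal closure of the rotation subgroups of a very rotating family in the sense of \cite{DGO} — not on a single Gromov hyperbolic space but on a whole finite \emph{system} of spaces acted on by $\MCG$ — and then to read the presentation off the algebraic output of the rotating--families machinery. Concretely, let $\calD$ be the \emph{disjointness graph} of the orbit $\MCG\cdot\alpha$: one vertex $v_\gamma$ for every simple closed curve $\gamma$ in that orbit, an edge joining $v_\gamma$ and $v_{\gamma'}$ whenever $\gamma$ and $\gamma'$ are disjoint and distinct. Sending the standard generator $v_\gamma$ of the partially commutative group $A(\calD)$ to the Dehn twist power $\tau_\gamma^{N}$ is a well defined homomorphism onto $H$ (its image contains $\tau^N$ and is normal, and the defining commutators hold because disjoint Dehn twists commute), and the theorem amounts to proving this map \emph{injective}, i.e.\ that a word which is reduced in $A(\calD)$ represents a non-trivial mapping class.

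To set up the geometry I would, for each essential simple closed curve $\gamma$, use the annular curve complex $\calC_\gamma$, a quasi-line on which $\tau_\gamma$ acts as a translation of length bounded below by a universal constant $\ell_0$. Subsurface projections make $\{\calC_\gamma\}$ into a projection system in the sense of Bestvina--Bromberg--Fujiwara; applying their finite colouring of subsurfaces one obtains finitely many colours $1,\dots,m$ and, for each colour $c$, a uniformly hyperbolic quasi--tree of metric spaces $X_c$ assembled from the annular complexes of the curves of colour $c$ — curves of one colour being pairwise crossing, while the $X_c$ stay mutually \emph{coupled}, since two curves of different colours that cross still project to one another's annuli. The group $\MCG$ acts on the finite set $\{X_1,\dots,X_m\}$, permuting the plates and acting by isometries compatibly. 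Now cone off, inside each $X_c$, every embedded copy of an annular complex $\calC_\gamma$ (with a large cone radius $\rho$), producing hyperbolic spaces $\dot X_c$ with an apex $a_\gamma$ over each $\calC_\gamma$, fixed by $\langle\tau_\gamma\rangle$. The first technical point is the computation that, for $N$ a large enough multiple of a constant $N_0=N_0(\Sigma)$ (depending only on $\rho$, the hyperbolicity constants and $\ell_0$), the cyclic group $\langle\tau_\gamma^{N}\rangle$ is a \emph{very rotating} subgroup at $a_\gamma$, because $\tau_\gamma^N$ drags points of $\calC_\gamma$ a distance $\geq N\ell_0$ which, after coning, exceeds the diameter seen across the apex. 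The second point, purely topological, is to record how twists meet plates that are not their own: if $\gamma'$ is \emph{disjoint} from $\gamma$ then $\tau_{\gamma'}$ acts trivially on $\calC_\gamma$, hence fixes $a_\gamma$ and a large neighbourhood of it in $\dot X_{c(\gamma)}$, simply because it is supported off $\gamma$; whereas if $\gamma'$ \emph{crosses} $\gamma$ the action of $\tau_{\gamma'}$ on $\dot X_{c(\gamma)}$ is governed, and is not trivial near $a_\gamma$, by the projection $\pi_{\calC_\gamma}(\gamma')$.

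The heart of the argument is a version of the rotating--families theorem of \cite{DGO} for a system of hyperbolic spaces permuted by the group and mutually coupled through the projection structure, carrying a very rotating cyclic subgroup over each apex. Its conclusion should be that the normal closure of all these cyclic subgroups is their graph product over the disjointness graph $\calD$: the commuting pairs of generators are exactly the disjoint pairs of curves (as recorded above, since a twist is then trivial near the other's apex), while no commutation can arise between a crossing pair because the coupling of the plates makes the two rotations genuinely interact; hence $H\cong A(\calD)$. Proving this system--version is where the title's ``plate spinning'' enters: one runs the windmill construction of \cite{DGO} on all the $\dot X_1,\dots,\dot X_m$ \emph{at once}, keeping the very rotating estimates, the Behrstock--type projection inequalities, and the bookkeeping of ``active'' apices coherent across every plate simultaneously. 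The genuinely new difficulty, compared with \cite{DGO}, is to handle the letters of a word that act non-trivially on several plates at the same time, to prove that the coupling between the plates degrades the free product \cite{DGO} produces on a single space down to exactly the partially commutative group dictated by disjointness — no more and no less — and to keep the threshold $N_0$ uniform in $\Sigma$ throughout the induction. I expect this synchronisation to be the main obstacle; the commutation relators themselves come essentially for free.

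Finally I would harvest the statement. Injectivity of $A(\calD)\to H$ is the content of the previous paragraph, so $H$ has the partially commutative presentation whose generators $\{\tau_\gamma^N:\gamma\in\MCG\cdot\alpha\}$ are the conjugates of $\tau^N$ (each $\tau_\gamma^N$ equals $g\tau^N g^{-1}$ for any $g$ with $g\alpha=\gamma$) and whose relators are the commutators $[\tau_\gamma^N,\tau_{\gamma'}^N]$ with $\gamma\cap\gamma'=\emptyset$; this is the first bullet. For the second bullet the same scheme applies verbatim with a finite collection of seed twists, one per topological type of simple closed curve on $\Sigma$ (there are finitely many $\MCG$-orbits), putting a very rotating cyclic subgroup at the apex of \emph{every} annular complex; the group generated by all $N$-th powers of all simple Dehn twists is the normal closure of those finitely many seeds, and the system--version of the rotating--families theorem identifies it with the partially commutative group on the disjointness graph of all essential simple closed curves of $\Sigma$, which is the asserted presentation. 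The constant $N_0$ of the theorem is then the least common multiple of the finitely many thresholds produced, orbit by orbit, by the very rotating condition relative to the hyperbolicity and projection constants of $\dot X_1,\dots,\dot X_m$.
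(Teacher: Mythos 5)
Your proposal contains a genuine and decisive error: you have misread the theorem and aim to prove a stronger statement that is actually false. You set up a surjection $A(\calD)\to H$ from the right-angled Artin group on the disjointness graph and announce that ``the theorem amounts to proving this map injective,'' concluding $H\cong A(\calD)$. But Theorem~\ref{theo;intro} does not assert that $H$ is a right-angled Artin group, and the introduction explicitly rules this out: it cites \cite{CLM} and \cite{BM} for the fact that these normal subgroups are in general \emph{not} right-angled Artin groups, and then spells out the distinction --- ``some elements in the commutator relators are not in the generating set, but merely conjugates of elements in the generating set.'' Concretely, the presentation delivered by Theorem~\ref{theo;main} has generators indexed by a set $\calS$ of orbit representatives, and relators of the form $[s, w s' w^{-1}]$ where $w$ ranges over words; these relators are commutations between conjugates of generators but the element $w s' w^{-1}$ is typically not itself in the generating set. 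Taking all conjugates as generators, as you do, forces one to add the conjugation relations $\Gamma_{gY}=g\Gamma_Y g^{-1}$ as in the first presentation of Theorem~\ref{theo;main}, and those are not commutators --- so the result is a proper quotient of $A(\calD)$. Your plan to establish injectivity of $A(\calD)\to H$ cannot succeed, and the mistake propagates to your whole final paragraph.

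There are also smaller but real departures in the technique. The paper does not cone off annular complexes inside Bestvina--Bromberg--Fujiwara quasi-trees; it works directly and axiomatically with the modified projection distances $d_Y$ of a \emph{composite projection system} (Definition~\ref{def;CPS}), never invoking hyperbolicity constants of coned spaces. The windmill is a collection of subsets $\calW_1,\dots,\calW_m$ of the vertex sets $\bbY_1,\dots,\bbY_m$ together with a \emph{cyclically rotating} principal coordinate and a graded $\calL$-convexity; the key difficulty (which you gesture at as ``synchronisation'') is precisely that unfolding in one coordinate degrades convexity in the others, and this is handled quantitatively via the tuple $\calL_{j_0}$ and Propositions~\ref{prop;unfolding_is_convex}, \ref{prop;principal_tree}. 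The group-theoretic output at each unfolding is read off a Bass--Serre tree in the principal coordinate whose black-vertex stabilizers are $\Gamma_R\times (G_W)_R$ --- this is exactly where the extra commutators $[s,ws'w^{-1}]$ come from, and why the answer is not a plain graph product. So even the group-theoretic shape of the conclusion, not just its strength, differs from what your argument would produce.
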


The difference with an  infinitely generated right angled
Artin group is that some elements in the commutator relators are not
in the generating set, but merely conjugates of elements in the
generating set.  We recover that  the normal closure is far
from being of finite index in $\MCG$, for instance because it has
 abelianisation of infinite rank  (the
relators being in the derived subgroup of the free
group over the set of generators). 

  In our point of view, this result above, and its departure
from the complexity of normal subgroups of $SL_n(\mathbb{Z})$ for
$n\geq 3$ (granted by Margulis normal subgroup theorem) reinforce
\cite{Fu,Cou} in 
witnessing a dent in the analogy between $\MCG$ and  $SL_n(\mathbb{Z})$. It 
also   answers  Ivanov's question on deep relations.

Let us discuss the proof of this theorem.

In \cite{DGO} the structure of the normal closure of a big
pseudo-Anosov was studied with the help of rotating families. Consider
$G$ a group acting by isometries on a space $X$. A
rotating family in $G$ on $X$  is a collection of subgroups (the
rotation groups), that is
closed under conjugacy, such that each of them fixes a certain point in
$X$ (thus inducing some kind of rotation around this point). Take
$\rho$ in one of these subgroups,  
fixing $c$. One may
measure an analogue of the angle of rotation of $\rho$ 
by taking $x$ at distance $1$ from $c$, and  measuring the infimal length
between $x$ and $\rho x$  of paths outside the ball of radius $1$
around $c$.     If $X$ is Gromov-hyperbolic (for a small hyperbolicity
constant), if the  fixed point of the different rotation groups are
sufficiently far from each other, and if the angles of rotations are
sufficiently big, the group generated by all the rotation groups is a
free product of a selection of them.  In \cite{DGO} we applied this theory to the
action of $\MCG$ on a cone-off of the curve graph of $\Sigma$. The
rotation groups were the conjugates of the big pseudo-Anosov considered.

The rotating family argument can be explained as follows. One analyses
the structure of groups generated by more and more rotation groups, to
discover that they arrange as a sequence of free products. Starting
from a quasi-convex set $W$ (that will change over time) that is at
first a small ball around a single fixed point of
a single rotation group, one sets $G_W$ the group generated by the rotation groups whose centers are in $W$, and one makes $W$ grow until it (almost)
touches another center of rotation, for some other group.
Call $S$ a $G_W$-transversal of the newly approached centers of rotation. 
 Then one
\emph{unfolds} $W$ into $W'$ by taking its images by the group $G_{W'}$ (thus generated by the new
rotations, and the rotation already with center in $W$). Because of
hyperbolicity, and of largeness of angles of rotations involved, the resulting
space is still quasi-convex, with  almost the same constant -- with
a little repair, it has the same quasi-convexity constant indeed.
Actually  $W'$ has the
structure of a tree whose vertices are the images of $W$ by the group $G_{W'}$,
and the images of points in $S$ by $G_{W'}$, thus giving by Bass-Serre duality the structure of
free product of $G_W$ and the rotation groups around points of $S$  (edge stabilizers are trivial since no element can fix two
different centers of rotation).  Then, one takes the new $W$ as $W'$ and start over. In the direct
limit, the group generated by all rotations has been described as a
free product of a selection of rotation  groups.

In \cite{BBF}, Bestvina Bromberg and Fujiwara, using a system of
subsurface projections,  discovered that there is
a normal finite index subgroup $G_0$ of $\MCG$ that acts on some spaces
quasi-isometric to trees, and on which Dehn twists behave like large
rotation subgroups. It has been  observed by several people that this implies
that the normal closure of a certain power of a Dehn twist in $G_0$ is
free, using the argument of \cite{DGO}. However, it is far from obvious how
to promote this structural feature to the normal closure in $\MCG$. 

In this paper, we use several quasi-trees as above, one for each left
coset of $G_0$ in $\MCG$.  The group $G_0$ acts on each of them, but
its action is twisted by the automorphism of $G_0$ that is the
conjugation by elements $g_i, i=1, \dots, m$ realising a transversal of $G_0$
in $\MCG$.  If $\tau^N$ is a Dehn twist in $G_0$,  the
normal closure of $\tau^N$ in $\MCG$ equals the normal closure of the
collection $\{ g_i \tau^Ng_i^{-1},\,  i=1,\dots m  \}$ in $G_0$.  Each $g_i
\tau^Ng_i^{-1}$ is a legitimate rotation on the quasi-tree associated
to $g_i$. 

The argument of \cite{DGO} is then performed simultanously on each of the
$m$ quasi-trees. Instead of one convex subset that grows, and gets
unfolded in a hyperbolic space,  we have   $m$ convex sets $\calW_1,
\dots, \calW_m$ in the $m$
quasi-trees. Each of them is invariant by the group generated by the
rotations around rotation points in all of them.  One looks for a
rotation point $R$ that is nearby one of these sets, and in a certain
sense, nearby all of them (although they do not live in the same
quasi-trees, this still makes sense in the framework of projection
systems). Then, one unfolds our convex sets in all coordinates $i=1,
\dots, m$.
A funny phenomenon happens. The unfolding in the coordinate of $R$
provides a nice tree, as the argument of \cite{DGO}, and the convexity
 of the result is quantitively very good.  This tree gives
 the structure of the new group by Bass-Serre duality, and reveals
 that only
 commutation relations are involved. There is no reason that the unfolding in all other
 coordinates produces something resembling  a tree, and could in
 principle destroy the convexity of $\calW_j$. However, using
 the properties of the projection system, we show that the result is
 still somehow convex (less convex than before though). 
   The game is then to unfold in the different
quasitrees at regular intervals of time in the process, and to control the degradation
of the convexity so that the repair can wait until a new unfolding
occurs. It is a game of plate spinning.

The quasi-trees that we will use come from projection complexes
defined in \cite{BBF}. We wrote the argument in
this axiomatic language, to avoid dealing with
useless hyperbolicity constants. In the end, even if the spaces are
indeed quasi-trees, this fact does not appear in the argument. The
axioms of projection systems are extensively used though, and they
contain the information that the geometric space is a quasi-tree. We
will thus prove a similar statement as Theorem \ref{theo;intro},
namely Theorem \ref{theo;main}, that gives the structure of groups
generated by composite rotating families. There is actually more
information coming from this composite rotating family structure, as
for instance the Greendlinger property (see Definition \ref{def;CW}), that describes
how an element in the group can be shortened in some coordinate of the
composite projection system.

\numberwithin{thm}{section}

\section{Composite projection systems}
     \subsection{Projection systems}
        Let us recall a part of the axiomatic construction of
        \cite{BBF}.

        \begin{defi} (\cite{BBF}) 

          A projection
        system is a set $\bbY$, with a constant $\theta>0$, 
        and for each $Y\in       \bbY$,  a 
        function $ \left(d_Y^\pi: \bbY\setminus\{Y\} \times
        \bbY\setminus\{Y\} \to \bbR_+\right)$ satisfying the following axioms: 
        \begin{itemize}
          \item       symmetry ($d_Y^\pi (X,Z) = d_Y^\pi (Z,X)$ for all $X,Y,Z$), 
          \item triangle inequality
       ($d^\pi_Y(X,Z) + d_Y^\pi(Z,W) \geq d_Y^\pi(X,W) $ for all $X,Y,Z, W$), 
         \item Behrstock
       inequality ($ \min\{ d^\pi_Y(X,Z), d_Z^\pi (X,Y) \} \leq \theta
       $ for all $X,Y,Z$), 
          \item properness ($\{Y, d_Y^\pi (X,Z) >\theta\}$ is
            finite for all $X,Z$). 
          \item In
       this work one also assume the separation ($d_Y^\pi (Z,Z) \leq
       \theta$ for all $Z,Y$).
       \end{itemize}
       \end{defi} 

       Observe that if the axioms are true for some
       $\theta$ they hold for all larger $\theta$.

       From this rudimentary axiomatic set, Bestvina Bromberg and
       Fujiwara manage to extract meaningful geometry, by modifying
       the functions $d_Y^\pi$ into some functions $d_Y$, that satisfy many more
       properties, usually encapsulated in the statement that the
       projection complex of $\bbY$, for a suitable  parameter
       $K$
       is a quasi-tree. 

       One should  think of $d_Y$ (or
       $d^\pi_Y$)  as an
       angular measure between $X$ and $Z$ seen from $Y$.  The axioms
       fit in this viewpoint:  the Behrstock inequality  says that if
       the angle at $Y$ between $X$ and $Z$ is large, then from the point of view of $Z$, the items $Y$ and $X$
       look aligned.

       Let us review very quickly the procedure of \cite{BBF} to
       produce the functions $d_Y$. 
       Given $\theta$ for which the axioms hold,  \cite{BBF} define $\calH(X,Z)$ to be
       the set of pairs $(X',Z')$ such that both $d_X^\pi$ and $d_Z^\pi$
       between them is strictly larger than $2\theta$, and one also include the
       pairs $(X,Z')$ if $d_Z^\pi (X,Z' ) >2\theta$, symmetrically  the
       pairs $(X',Z)$ if $d_X^\pi (X',Z ) >2\theta$, and finally the
       pair $(X,Z)$ itself.

       Then $d_Y(X,Z) $ is defined to be the infimum of $d_Y^\pi$ over
       $\calH(X,Z)$.

       For all $K$, $\bbY_K(X,Z)$ denotes the set $\{Y, d_Y(X,Z) \geq K\}$.

       \cite[Theorem 3.3]{BBF} states that there exists $\Theta$ and
       $\kappa\geq \theta$
       depending only on  $\theta$, such that for all $X,Y,Z, W$:

       \begin{itemize}
         \item (Symmetry)  $d_Y(X,Z) = d_Y(Z,X)$
         \item (Coarse equality) $d_Y^\pi -\kappa \leq d_Y \leq d_Y^\pi$
         \item (Coarse triangle inequality) $d_Y(X,Z) + d_Y(Z,W) \geq d_Y(X,W) -\kappa$
         \item (Behrstock inequality) $\min\{ d_Y(X,Z) , d_X(Y,Z) \}\leq \kappa$
         \item (Properness) $\{ V, d_V(X,Z) >\Theta\}$ is finite
         \item (Monotonicity) If $d_Y(X,Z) \geq \Theta$ then both
           $d_W(X,Y), d_W(Z,Y)$ are at most $d_W(X,Z)$.
         \item (Order) $\bbY_\Theta (X,Z) \cup \{X, Z\}$ is totally
           ordered by an order $ \dot{<}$ such that $X$ is lowest, $Z$
           is greatest, and if $Y_0 \dot{<} Y_1 \dot{<}Y_2$, then 
           $$  d_{Y_1}(X,Z) -\kappa  \leq d_{Y_1} (Y_0, Y_2)  \leq  d_{Y_1}(X,Z), $$
           and   
           $$ d_{Y_0} (Y_1, Y_2) \leq \kappa, \quad  d_{Y_2}(Y_1, Y_0)   \leq \kappa.  $$ 
        
       \end{itemize}

       Then choosing $K$ larger than $\Theta$, the projection complex
       $\calP_K(\bbY)$ is defined as follows: it is a graph whose
       vertices are the elements of $\bbY$ and where $X,Z$  span an
       edge if and only if $\bbY_K(X,Z) =\emptyset$. Then
       \cite[Thm. 3.16]{BBF} states that for sufficiently large $K$,
       $\calP_K(\bbY)$ is connected and quasi-isometric to a tree for
       its path metric.

     \subsection{Composite projection systems}
         In this work, we are concerned with a composite situation.

         \subsubsection{Definitions, and projection complexes}

         Let $\bbY_*$ be the disjoint union of finitely many countable
         sets
         $\bbY_1, \dots, \bbY_m$. Their indices $i=1, \dots, m$  are called the coordinates.         Given $Y\in \bbY_*$, denote by $i(Y)$ its coordinate: 
         $Y\in \bbY_{i(Y)}$.

         \begin{defi}\label{def;CPS}
         A composite projection system on a countable set $\bbY_* =
         \sqcup_{i=1}^m \bbY_i$ 
         is the data of a
         constant $\theta>0$,  of a family of subsets  $\Act(Y)\subset
         \bbY_*,  Y\in
         \bbY_*$  (the active set for $Y$) such that $\bbY_{i(Y)} \subset\Act(Y)$,     
          and of a family of functions
         $d^\pi_Y : ( \Act(Y)\setminus \{Y\} \times \Act(Y) \setminus \{Y\})  \to \bbR_+$, satisfying the
         symmetry, the triangle inequality, the Behrstock inequality for
         $\theta$ whenever both quantities are defined, 
         the properness for $\theta$ when restricted to each
         $\bbY_i$,  
         the separation for
         $\theta$, and also   three other properties related to the
         map $\Act$:  
         \begin{itemize}
           \item (symmetry in action) $X\in \Act(Y)$ if and
         only if $Y\in \Act(X)$, 
           \item (closeness in inaction) if   $X\notin \Act(Z)$, for
         all $Y \in \Act(X) \cap \Act(Z)$, $ d_Y^\pi( X,Z )\leq \theta
         $ 
         \item (finite filling)  for all $\calZ\subset \bbY_*$, there
           is a finite collection 
            of elements $X_j$ in $\calZ$ such that $\cup_j
            \Act(X_j)$ covers  $\cup_{ X\in \calZ} \Act(X)$.
         \end{itemize}
         \end{defi}

         The closeness in inaction  can be understood as a
         complement to Behrstock inequality: ``if $ d_Y^\pi( X,Z )> \theta
         $, then  $ d_X^\pi( Y,Z )$ is {\it defined} and is less than $\theta$''.

         Applying \cite{BBF} (as recalled in the previous subsection) 
         we  get, for each coordinate
         $i\leq m$, and  for a suitable choice of $\theta$,  a
         modified function $d_Y : \bbY_{i} \times \bbY_i       \to
         \bbR_+$.  This function is unfortunately not defined on
         $\Act(Y)\setminus \bbY_i$, but $d^\pi_Y$ is defined on it,
         and thus we choose to define $d^\ang_Y (X,Z)$ to be $d_Y$ if
         both $X, Z$ are in $\bbY_i$ and $d^\pi_Y$ otherwise.

         We then define $\bbY^j_M (X,Z) = \{Y\in \bbY_j \cap \Act(X)
         \cap \Act(Z),  d^\ang_Y( X,Z) \geq M\}$.  The elements
         $X,Y,Z$ need not be in the same coordinate.

         In the following we first choose $\theta$ such that the
         construction of \cite{BBF} applies for all coordinates
         $\bbY_i$, and this provides the constants $\Theta$  and
         $\kappa$ (suitable for all coordinates).

         Then we choose $c_* >1000 (\Theta +\kappa)$, and $\Theta_P =    c_*
         +21m\kappa $. One can choose  $K > \Theta_P$ 
         sufficiently large to get  quasi-trees
         in all coordinates, but this is not important for us. 

         Finally,  choose 
         $\Theta_{Rot} >  2c_* +  2\Theta_P + 20 (\kappa +\Theta)   $ 
         for later purpose.
        
         To keep track of the constants, it is worth keeping in mind
         that 
         $$\Theta_{Rot} >> 2 \Theta_P >> 2c_* >> 20 (\Theta +\kappa) >>\theta.$$

         \subsubsection{Group in the picture}

         An \emph{automorphism} of a composite projection system is a map
         $\psi: \bbY_*\to \bbY_*$ 
         \begin{itemize}
            \item that induces a bijection on each
         $\bbY_i$, 
            \item that sends $\Act(Y)$ to $\Act(\psi(Y))$, 
            \item such  that for all $Y$, and all $X,Z\in \Act(Y)$, $d_Y^\ang (X,Z) =
              d_{\psi(Y)}^\ang(\psi(X), \psi(Z))$.            
          \end{itemize}

          A \emph{rotation} around $X\in \bbY_*$  in a composite projection
          system  $\bbY_*$ is an automorphism $\psi$ such that
          $\psi(X) = X$, and such that for all $Y\in \bbY_* \setminus
          \Act(X)$, and for all $ W, Z \in \Act(Y)$,   $\psi(Y)=Y$,
          and  $d_Y^\ang (W,Z) =d_Y^\ang (\psi(W),\psi(Z))$.

         Let us now assume that a group $G$ acts on the composite
         projection system by automorphisms.

         Let us denote by $G_X$ the stabilizer of $X\in \bbY_*$.

         We say that a subgroup $\Gamma_X<G_X$   has \emph{proper
           isotropy}    
         if for all $N>0$ there is 
         a finite subset $F(N)$ of $\Gamma_X$  such that
         if $\gamma \in  \Gamma_X\setminus F(N)$, and if $Y\in \Act(X)$, 
         then $d_X^\pi(Y, \gamma Y)
         >N$.

         \subsubsection{Betweenness and orbit estimates}

         \begin{lemma} (Betweenness is transitive)\label{lem;bet_trans}
           
           If $d^\ang_Y (X,Z) >2\kappa$ and  $d^\ang_Z (Y,T)
           >2\kappa$, then $Z$ is in $\Act(X)$ and $d_Z^\ang(X,T )
           \geq  d^\ang_Z (Y,T)  - 2 \kappa$.

 If $d^\ang_Y (X,Z) >10\kappa$ and  $d^\ang_Z (X,T)
           >10\kappa$, then $d^\ang_Y (X,T) \geq  d^\ang_Y (X,Z) -2\kappa$.
         \end{lemma}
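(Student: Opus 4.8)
The plan is to derive both assertions purely from the coarse package of \cite[Theorem 3.3]{BBF}, transported to the mixed function $d^\ang$, together with the axioms ``symmetry in action'' and ``closeness in inaction''. As a preliminary I would record that, with constant $\kappa$ and whenever all evaluated quantities are defined, $d^\ang$ inherits symmetry, the coarse triangle inequality $d^\ang_Y(X,T)+d^\ang_Y(T,Z)\ge d^\ang_Y(X,Z)-\kappa$, the Behrstock inequality $\min\{d^\ang_Y(X,Z),d^\ang_X(Y,Z)\}\le\kappa$, and $d^\ang_Y(X,Z)\le d^\pi_Y(X,Z)$. This is routine: if all elements involved lie in a single coordinate then $d^\ang$ is the function $d$ of \cite{BBF} and these are \cite[Theorem 3.3]{BBF} verbatim; otherwise the relevant pairs are evaluated by $d^\pi$, which satisfies the sharp $\theta$-versions of the axioms, and one interpolates using $d^\pi-\kappa\le d\le d^\pi$ and $\theta\le\kappa$.

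For the first statement, observe first that $d^\ang_Y(X,Z)$ and $d^\ang_Z(Y,T)$ being defined yields $Y\in\Act(X)\cap\Act(Z)$ and $X,T\in\Act(Z)$. If $Z$ were not in $\Act(X)$ then $X\notin\Act(Z)$, and closeness in inaction applied to $W=Y\in\Act(X)\cap\Act(Z)$ would give $d^\pi_Y(X,Z)\le\theta$, hence $d^\ang_Y(X,Z)\le\theta<2\kappa$, a contradiction; so $Z\in\Act(X)$, equivalently $X\in\Act(Z)$, and all quantities below are defined. The Behrstock inequality relating $d^\ang_Y(X,Z)$ to $d^\ang_Z(X,Y)$, together with $d^\ang_Y(X,Z)>2\kappa\ge\kappa$, gives $d^\ang_Z(X,Y)\le\kappa$; the coarse triangle inequality at $Z$ with middle point $X$ then yields
\[
 d^\ang_Z(X,T)\ \ge\ d^\ang_Z(Y,T)-d^\ang_Z(X,Y)-\kappa\ \ge\ d^\ang_Z(Y,T)-2\kappa .
\]

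For the second statement, defined-ness and $d^\ang_Y(X,Z)>10\kappa$ give $Z\in\Act(Y)$, $Y\in\Act(Z)$, $X\in\Act(Y)\cap\Act(Z)$, $T\in\Act(Z)$. As above, Behrstock gives $d^\ang_Z(X,Y)\le\kappa$, and the coarse triangle at $Z$ with middle point $Y$ gives
\[
 d^\ang_Z(Y,T)\ \ge\ d^\ang_Z(X,T)-d^\ang_Z(X,Y)-\kappa\ >\ 8\kappa .
\]
This in turn forces $T\in\Act(Y)$: otherwise $Y\notin\Act(T)$, and closeness in inaction applied to $W=Z\in\Act(Y)\cap\Act(T)$ would give $d^\pi_Z(Y,T)\le\theta$, hence $d^\ang_Z(Y,T)\le\theta$, contradicting the previous line. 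So $d^\ang_Y(X,T)$ and $d^\ang_Y(Z,T)$ are defined; Behrstock relating $d^\ang_Z(Y,T)$ to $d^\ang_Y(Z,T)$, with $d^\ang_Z(Y,T)>8\kappa\ge\kappa$, gives $d^\ang_Y(Z,T)\le\kappa$, and the coarse triangle at $Y$ with middle point $T$ finally gives
\[
 d^\ang_Y(X,T)\ \ge\ d^\ang_Y(X,Z)-d^\ang_Y(Z,T)-\kappa\ \ge\ d^\ang_Y(X,Z)-2\kappa .
\]

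The only delicate point is bookkeeping: at each use of Behrstock or of the triangle inequality one must certify that the functions involved are defined, that is, that the arguments belong to the appropriate active sets. This dictates the order of the second argument --- the estimate $d^\ang_Z(Y,T)>8\kappa$ must be in hand both to conclude $T\in\Act(Y)$ (so that $d^\ang_Y(X,T)$ has meaning) and to legitimize the Behrstock step that controls $d^\ang_Y(Z,T)$. Beyond the three short displays per statement I do not anticipate any computation.
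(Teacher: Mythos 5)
Your proof is correct and follows essentially the same route as the paper's: Behrstock to get $d^\ang_Z(X,Y)\le\kappa$, then the coarse triangle inequality at $Z$, and for the second statement one more Behrstock step to get $d^\ang_Y(Z,T)\le\kappa$ followed by the triangle inequality at $Y$. The only addition is your explicit bookkeeping of active sets via ``closeness in inaction'' (which the paper leaves implicit, even though the conclusion $Z\in\Act(X)$ is part of the statement), and that part is also correct.
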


         \begin{proof}
           By Behrstock inequality, 
           one has $d^\ang_Z (X,Y)  \leq \kappa$ in both cases. For
           the first implication,   by triangular
           inequality, $d_Z^\ang (X,T ) \geq d_Z^\ang(Y,T ) -
           d_Z^\ang(X,Y ) - \kappa $.

           For the second implication, 
           $d^\ang_Z( Y,T)$ is within $2\kappa$ from  $d^\ang_Z
           (X,T)$. Behrstock inequality gives that $d^\ang_Y(Z,T) \leq
           \kappa$ and therefore $d^\ang_Y (X,T) \geq d^\ang_Y (X,Z) -2\kappa$.
         \end{proof}

         \begin{lemma}{\it (Orbit estimates, or transfer in a coordinate)} \label{lem;transfert}

           Assume that $\Gamma_X$ has proper isotropy.

           For the  finite subset $F = F( 10\kappa)$ of $\Gamma_X$, and  for all
           $Y\in \Act(X)$,
           and all $X'$ that is either in  $ \Act(Y)$  or in $\Act(X)$, and all $\gamma\in
           \Gamma_X\setminus F$,  then either $d_Y^\ang
           (X',X)\leq \kappa$ or  $d_Y^\ang
           ( \gamma X',X)\leq \kappa$.
       \end{lemma}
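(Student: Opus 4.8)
The plan is to argue by contradiction, feeding proper isotropy of $\Gamma_X$ --- the only non-formal ingredient --- into the Behrstock and triangle inequalities of the composite system so as to relay the estimate from the center $X$ to the center $Y$.

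First I would treat the degenerate case $X'\notin\Act(X)$. Then by hypothesis $X'\in\Act(Y)$, so $Y\in\Act(X')$ by symmetry in action, and $X\in\Act(X')\cap\Act(Y)$. Since $X'\notin\Act(X)$, closeness in inaction applied to the pair $(X',X)$ with $Y$ as a common witness gives $d_Y^\pi(X',X)\le\theta$, hence $d_Y^\ang(X',X)\le\theta\le\kappa$, and we are done. So from now on I assume $X'\in\Act(X)$; note then $\gamma X'\in\Act(X)$ as well, because $\gamma$ fixes $X$.

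Now suppose for contradiction that neither $d_Y^\ang(X',X)\le\kappa$ nor $d_Y^\ang(\gamma X',X)\le\kappa$, where a quantity that is undefined (because the relevant object is not in $\Act(Y)$) is by convention treated as failing the inequality. For each of the two objects $T\in\{X',\gamma X'\}$ I claim $d_X^\ang(T,Y)\le\kappa$: if $T\in\Act(Y)$ this is the Behrstock inequality for $d^\ang$ (with constant $\kappa$, inherited from the $d^\pi$ Behrstock inequality through the coarse equality $d^\pi_\cdot-\kappa\le d_\cdot\le d^\pi_\cdot$), using that $d_Y^\ang(T,X)>\kappa$; if $T\notin\Act(Y)$ then $Y\notin\Act(T)$, and closeness in inaction applied to the pair $(Y,T)$ with $X$ as a common witness (legitimate since $X\in\Act(Y)$ and $X\in\Act(T)$) gives $d_X^\pi(Y,T)\le\theta$, a fortiori $d_X^\ang(T,Y)\le\theta\le\kappa$. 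All these quantities are defined since $X',\gamma X',Y\in\Act(X)$. Feeding the two bounds $d_X^\ang(X',Y)\le\kappa$ and $d_X^\ang(\gamma X',Y)\le\kappa$ into the coarse triangle inequality at $X$ yields $d_X^\ang(X',\gamma X')\le 3\kappa$, hence $d_X^\pi(X',\gamma X')\le 4\kappa$ by coarse equality. But $\gamma\in\Gamma_X\setminus F(10\kappa)$ and $X'\in\Act(X)$, so proper isotropy forces $d_X^\pi(X',\gamma X')>10\kappa$, a contradiction.

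The main obstacle I anticipate is purely bookkeeping: tracking which of the hybrid quantities $d^\ang$ are actually defined (that is, which active sets contain which objects) and verifying that the Behrstock and triangle inequalities pass from $d^\pi$ to the hybrid $d^\ang$ with the stated constants. The facts that $\gamma$ fixes $X$ (so $\gamma X'\in\Act(X)\iff X'\in\Act(X)$) and that symmetry in action is available are exactly what let the case distinction close up. One can equally well run the final contradiction through proper isotropy applied to $Y$ rather than $X'$: then one uses automorphism equivariance to rewrite $d_X^\ang(\gamma X',Y)=d_X^\ang(X',\gamma^{-1}Y)$, combines with $d_X^\ang(X',Y)\le\kappa$ via the triangle inequality to get $d_X^\ang(Y,\gamma^{-1}Y)\le 3\kappa$, applies $\gamma$, and contradicts $d_X^\pi(Y,\gamma Y)>10\kappa$.
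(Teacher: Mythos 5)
Your proof is correct and takes essentially the same approach as the paper's: a case split on which active sets contain $X'$, with closeness in inaction disposing of the degenerate cases, and the Behrstock inequality together with proper isotropy doing the substantive work (you run it as a contrapositive, the paper runs it directly, but the content and constants are identical). One small slip: in your degenerate case you assert $X\in\Act(X')\cap\Act(Y)$, but $X'\notin\Act(X)$ is the hypothesis of that case, so $X\notin\Act(X')$; fortunately you actually use $Y$, not $X$, as the witness for closeness in inaction, and $Y\in\Act(X')\cap\Act(X)$ does hold, so the argument is unaffected.
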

       \begin{proof}
         Let us first treat the case of   $ X'\in \Act(Y)$. 
         If $d_Y^\ang
         (X',X)\leq \kappa $ we are done.  Assume that  $d_Y^\ang
         (X',X)> \kappa $.  
         By closeness in inaction, $  X'\in \Act(X)$,     
         and by Behrstock inequality (and because $\kappa \geq \theta$),
         one has  $d^\ang_X(X',Y) \leq \kappa$.  By
         proper isotropy (and coarse triangle inequality),   
         $d^\ang_X(\gamma X',Y) >
          5\kappa$. Thus, by Behrstock inequality again, $d_Y^\ang
         ( \gamma X',X)\leq \kappa$.

         Now assume that  $ X'\notin \Act(Y)$, but is in
         $\Act(X)$. Since $Y\in \Act(X)$ we can measure $d_X^\ang(X', Y)$
         and (since $\Gamma_X$ preserves $\Act(X)$) also
         $d_X^\ang(\gamma X', Y)$. By proper isotropy,  $d_X^\ang(X',
         \gamma X') \geq  10\kappa$ and therefore at least one of the
         quantities $d_X^\ang(X', Y)$ and $d_X^\ang(\gamma X', Y)$ is
         larger than $4\kappa$. Assume for instance that  $d_X^\ang(X', Y) \geq
         4\kappa$. Then by Behrstock inequality, $d_Y^\ang
         (X',X)\leq \kappa$. 
       \end{proof}

       To facilitate notations, we will say that a property is true for almost all elements of a group if the property holds for all elements outside a certain finite subset of the group.
       Using this lemma four times, together with triangle inequality, one
       gets:

         \begin{lemma} (Orbit estimates for proper isotropy)\label{lem;orbit_estimates}

           Let $X_1,X_2, X'_1, X'_2$ such that $X_1, X_2 \in
           \Act(Y)$. Assume that either $X'_i$ is in $\Act(Y)$ or in $\Act(X_i)$.

           If the group $\Gamma_{X_1}$ and $\Gamma_{X_2}$ have proper isotropy, 
           then for almost all elements $\gamma_1\in \G_{X_1}$ and  $\g_2 \in
           \G_{X_2}$, one has $$d^\ang_Y( \g_1 (X'_1),
           \g_2 (X'_2) )   -4\kappa  \leq d^\ang_Y(X_1,X_2) \leq
           d^\ang_Y( \g_1 (X'_1),  \g_2(X'_2) )   +4\kappa.$$
 
         \end{lemma}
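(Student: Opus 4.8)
The plan is to reduce the two-sided estimate to an \emph{alignment} claim and then finish with two uses of the triangle inequality. It suffices to show that $(\star)$ for almost all $\gamma_1\in\G_{X_1}$ one has $d^\ang_Y(\gamma_1 X_1',X_1)\le\kappa$, and for almost all $\gamma_2\in\G_{X_2}$ one has $d^\ang_Y(\gamma_2 X_2',X_2)\le\kappa$. Indeed, granting $(\star)$ and inserting first $X_1$ and then $X_2$, the triangle inequality (valid for $d^\ang$ up to an additive $\kappa$), applied twice, yields
\[
d^\ang_Y(\gamma_1 X_1',\gamma_2 X_2') \;\le\; d^\ang_Y(\gamma_1 X_1',X_1) + d^\ang_Y(X_1,X_2) + d^\ang_Y(X_2,\gamma_2 X_2') + 2\kappa ,
\]
which is at most $d^\ang_Y(X_1,X_2)+4\kappa$ by the two bounds in $(\star)$; inserting instead $\gamma_1 X_1'$ and $\gamma_2 X_2'$ around $X_1,X_2$ gives the reverse inequality the same way. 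Here $d^\ang_Y(X_1,X_2)$ is defined since $X_1,X_2\in\Act(Y)$, and we tacitly use that the other quantities are defined too --- i.e.\ that $\gamma_i X_i'\in\Act(Y)$ in the situations where the lemma is applied --- which is ensured by the hypothesis ($X_i'\in\Act(Y)$ or $X_i'\in\Act(X_i)$) together with symmetry in action and closeness in inaction.

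To prove $(\star)$, treat $i=1$ (the case $i=2$ is identical). Apply Lemma~\ref{lem;transfert} with $X:=X_1$, with the given $Y$ --- note $Y\in\Act(X_1)$ by symmetry in action --- with $X':=X_1'$ (which lies in $\Act(Y)$ or in $\Act(X_1)$ by hypothesis), and with $\gamma$ ranging over $\G_{X_1}\setminus F$, where $F:=F(10\kappa)$. For each such $\gamma$ it gives $d^\ang_Y(X_1',X_1)\le\kappa$ or $d^\ang_Y(\gamma X_1',X_1)\le\kappa$. Put $B:=\{\gamma\in\G_{X_1}:d^\ang_Y(\gamma X_1',X_1)>\kappa\}$; it is enough to prove that $B$ is finite, for then $(\star)$ holds with exceptional set $B$. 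Assume $B$ is infinite; then $B\setminus F$ is infinite. Fix $\gamma_a\in B\setminus F$; as $F\gamma_a$ is finite, there is $\gamma_b\in B\setminus F$ with $\gamma_b\gamma_a^{-1}\notin F$. Apply Lemma~\ref{lem;transfert} a second time, now with $X:=X_1$, $X':=\gamma_a X_1'$ and element $\gamma_b\gamma_a^{-1}\in\G_{X_1}\setminus F$; here $\gamma_a X_1'\in\Act(X_1)$, automatically if $X_1'\in\Act(X_1)$ (as $\G_{X_1}$ preserves $\Act(X_1)$) and otherwise because $d^\ang_Y(\gamma_a X_1',X_1)>\kappa\ge\theta$ forces it by closeness in inaction. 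The lemma gives $d^\ang_Y(\gamma_a X_1',X_1)\le\kappa$ or $d^\ang_Y((\gamma_b\gamma_a^{-1})\gamma_a X_1',X_1)=d^\ang_Y(\gamma_b X_1',X_1)\le\kappa$, both contradicting $\gamma_a,\gamma_b\in B$. Hence $B$ is finite. The symmetric argument for $i=2$ invokes Lemma~\ref{lem;transfert} for the third and fourth times.

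The step demanding care is the finiteness of $B$ in the case $d^\ang_Y(X_1',X_1)\le\kappa$: there a single use of the dichotomy in Lemma~\ref{lem;transfert} tells us nothing about $\gamma_1 X_1'$, and one genuinely has to transport the lemma to the point $\gamma_a X_1'$ in order to exclude a second exceptional element. This is precisely where proper isotropy --- which is built into Lemma~\ref{lem;transfert} --- does the work: the orbit $\{\gamma X_1':\gamma\in\G_{X_1}\}$ is spread out as seen from $X_1$, so only finitely many of its points can ``point towards'' $Y$. The remaining delicacy is pure bookkeeping about which $d^\ang$-quantities are defined when $X_i'\notin\Act(Y)$, dealt with by symmetry in action and closeness in inaction; the two triangle-inequality steps are then routine.
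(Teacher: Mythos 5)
Your proof is correct and follows the route the paper itself indicates (the paper offers no argument beyond ``using this lemma four times, together with triangle inequality''). The one step you add --- re-applying Lemma~\ref{lem;transfert} at the translated point $\gamma_a X_1'$ with the element $\gamma_b\gamma_a^{-1}\notin F$ to show the exceptional set $B$ is finite --- is exactly the supplement needed to upgrade the bare dichotomy of Lemma~\ref{lem;transfert} (which says nothing about $\gamma X'$ when $X'$ itself already projects $\kappa$-close to $X$) into the ``for almost all $\gamma_1,\gamma_2$'' quantifier, and your accounting of the $4\kappa$ is consistent with the statement.
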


         Recall that  we chose $K > 2\Theta +\kappa$.
        
         \begin{prop}(Ellipticity)

           Given $X \in \bbY_*$, and any $j\leq m$, the group $G_X$
           has an orbit in     $\calP_K(\bbY_j)$ of diameter at most $1$.  
         \end{prop}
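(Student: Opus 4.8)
The plan is to treat separately the cases $X\in\bbY_j$ and $X\notin\bbY_j$. If $X\in\bbY_j$, then $X$ is a vertex of $\calP_K(\bbY_j)$ and every element of $G_X$ fixes it, so the orbit $\{X\}$ has diameter $0$ and we are done. The content is in the case $X\notin\bbY_j$, where the idea is to produce a $G_X$-invariant subset $S\subset\bbY_j$ that plays the role of the (coarse) nearest-point projection of $X$ into the coordinate $j$, and to check that it has diameter at most $1$ in $\calP_K(\bbY_j)$; then the $G_X$-orbit of any point of $S$ is an orbit of diameter $\leq 1$.

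For the construction, fix any $Y_1\in\bbY_j$. By properness (restricted to $\bbY_j$) and the Order property, the set $\bbY^j_\Theta(X,Y_1)\cup\{X,Y_1\}$ is finite and totally ordered, with $X$ lowest; let $Y_0$ be the successor of $X$ in this order. Transitivity of betweenness (Lemma~\ref{lem;bet_trans}) shows $\bbY^j_\Theta(X,Y_0)=\emptyset$: any $W$ that were $\Theta$-between $X$ and $Y_0$ would also be $\Theta$-between $X$ and $Y_1$, hence would sit strictly between $X$ and $Y_0$ in the order, which is impossible. Now set
$$S:=\{\,Y\in\bbY_j \ :\ \bbY^j_\Theta(X,Y)=\emptyset\,\};$$
it contains $Y_0$, and it is $G_X$-invariant because every $g\in G_X$ is an automorphism fixing $X$, so $\bbY^j_\Theta(X,gY)=g\cdot\bbY^j_\Theta(X,Y)$. (When $\bbY_j\not\subseteq\Act(X)$, closeness in inaction forces $S$ to contain all of $\bbY_j\setminus\Act(X)$ as well; keeping control of this ``inactive'' part is precisely the delicate point, see below, and in that case one works with the orbit $G_X\cdot Y_0$, which lies in $S\cap\Act(X)$.)

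It remains to bound the diameter. Let $Y,Y'\in S$ (or, in the delicate case, in $G_X\cdot Y_0$) and let $W\in\bbY_j$; we must show $d^\ang_W(Y,Y')<K$, i.e.\ $\bbY^j_K(Y,Y')=\emptyset$. If $W$ is active for $X$ (and $W\neq Y,Y'$), then $d^\pi_W(X,Y)<\Theta$ and $d^\pi_W(X,Y')<\Theta$ since $W\notin\bbY^j_\Theta(X,Y)\cup\bbY^j_\Theta(X,Y')$, so the (exact) triangle inequality for $d^\pi$ gives $d^\pi_W(Y,Y')<2\Theta$ and hence $d^\ang_W(Y,Y')<2\Theta<K$ — this is where the choice $K>2\Theta+\kappa$ enters. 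The cases $W\in\{Y,Y'\}$ contribute at most a bounded amount, well below $K$, by the separation axiom and the standard properties of the BBF-modified distances. The remaining case, $W$ not active for $X$, is the main obstacle: here one exploits closeness in inaction, which yields $d^\pi_{Y}(X,W)\leq\theta$ and $d^\pi_{Y'}(X,W)\leq\theta$ whenever $Y,Y'$ are themselves active for $X$, and then the Behrstock inequality forces $d^\pi_W(Y,Y')\leq\theta$; controlling the configuration when $Y$ or $Y'$ fails to be active for $X$ is what genuinely requires the composite axioms — closeness in inaction together with finite filling, the latter used to tame the part of the coordinate $\bbY_j$ that does not see $X$. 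Granting this, $\bbY^j_K(Y,Y')=\emptyset$, the chosen orbit has diameter at most $1$, and the proposition follows.
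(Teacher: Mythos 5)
Your strategy is the same as the paper's: take the $G_X$-invariant set $S$ of elements of $\bbY_j$ having no large projection distance from $X$ (the paper uses the threshold $K_0=(K-\kappa)/2\geq\Theta$ where you use $\Theta$; either works), and verify that any two points of $S$ span an edge of $\calP_K(\bbY_j)$ by running the triangle inequality through $X$. The difference lies in how the inactive case is handled, and there your proof has a genuine gap. You correctly identify that a witness $W\in\bbY_j$ with $W\notin\Act(X)$ is the obstacle — for such a $W$ the quantity $d^\ang_W(X,\cdot)$ is simply undefined, so the triangle-through-$X$ step collapses — but you then write ``Granting this, \dots'' without supplying the argument, and the tools you gesture at (closeness in inaction, finite filling) do not visibly close it: closeness in inaction controls $d^\ang_Y(X,W)$ for $Y\in\Act(X)\cap\Act(W)$, which is a statement about projections to $Y$, not the needed bound on $d^\ang_W(Y,Y')$.

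The paper avoids the issue by a preliminary reduction you do not perform: it notes at the outset that if $G_X$ fixes some element $Y\in\bbY_j$ then $\{Y\}$ is already an orbit of diameter $0$, and for the remainder assumes $\bbY_j\subset\Act(X)$, so that every $W\in\bbY_j$ is active for $X$ and your ``main case'' is the only case. You should supply that reduction (or an actual argument for the inactive case) rather than leave it open. Two smaller points: (i) invoking Lemma~\ref{lem;bet_trans} to conclude $\bbY^j_\Theta(X,Y_0)=\emptyset$ loses $2\kappa$ per application, so it only yields $\bbY^j_{\Theta-2\kappa}$; the Monotonicity property (from $d_{Y_0}(X,Y_1)\geq\Theta$ deduce $d_W(X,Y_0)\leq d_W(X,Y_1)$) gives the clean statement. (ii) $W\notin\bbY^j_\Theta(X,Y)$ gives $d_W(X,Y)<\Theta$, hence $d^\pi_W(X,Y)<\Theta+\kappa$, not $<\Theta$; the resulting triangle bound is $2\Theta+2\kappa$ — still far below $K$, but the constants should be stated correctly.
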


         \begin{proof}
         If $j=i(X)$, and more generally, if $G_X$ fixes an element $Y\in
         \bbY_j$, it is obvious. Assume then that $\bbY_j \subset \Act(X)$.

         The group $G_X$
         preserves the set $\{Z \in \bbY_j,  \bbY^j_{K_0} (X, Z) =
         \emptyset\}$ for any 
         $K_0$ hence for $K_0 = (K-\kappa)/2\geq \Theta$.  
         Consider $Z_a, Z_b$ in this set, we claim that
         $\bbY^j_{K} (Z_a,Z_b)$ is empty. Assume $Y\in   \bbY^j_{K}
         (Z_a,Z_b)$.    
         Since $Y\in
         \Act(X)$ we can consider $d_Y^\ang (Z_a, X)$ and $d_Y^\ang
         (Z_b, X)$. By triangle inequality, $d_Y^\ang (Z_a, X) +d_Y^\ang
         (Z_b, X) \geq   d_Y^\ang (Z_a, Z_b) -\kappa  \geq
         K-\kappa$. Thus, one of them needs to be larger than $(K -
         \kappa)/2$ hence $Y$ is either in $ \bbY_{K_0} (X, Z_a)$ or
         in 
         $\bbY_{K_0} (X, Z_b)$, and this is a contradiction to our assumption.
         \end{proof}

         \begin{prop} (Induced orders)\label{prop;induced_order}

           Consider  $X, Z \in \bbY_*$, with $Z\in \Act(X)$.  Assume that
           $\G_X, \G_Z$ are infinite subgroups of $G_X, G_Z$ with proper isotropy.

           For all $i\leq m$,  for all $M \geq \Theta + 12\kappa$, the set
           $\bbY^i_{M} (X, Z)$ is finite, and  carries a partial order $\dot{<}$, that is
           given by the order of $ \bbY^i_{M -4\kappa} ( \g_X
           (X^i), \g_Z Z^i)$, for arbitrary $X^i$, $Z^i$,     
           in $\bbY_i$,
           and almost all $\g_X\in \G_X$ and $\g_Z\in \G_Z$.

         \end{prop}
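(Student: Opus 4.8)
The plan is to reduce the statement about $\bbY^i_M(X,Z)$ to the corresponding statement about a genuine pair of elements $\g_X(X^i), \g_Z(Z^i)$ in the single coordinate $\bbY_i$, where the \cite{BBF} ``Order'' property (as recalled in the previous subsection) already gives a total order on $\bbY_\Theta(\cdot,\cdot)\cup\{\cdot,\cdot\}$. The bridge between the composite quantity $d_Y^\ang(X,Z)$ and the single-coordinate quantity $d_Y^\ang(\g_X(X^i),\g_Z(Z^i))$ is exactly Lemma \ref{lem;orbit_estimates}: since $\G_X,\G_Z$ have proper isotropy, for almost all $\g_X\in\G_X$ and $\g_Z\in\G_Z$ one has, for every $Y\in\bbY_i\cap\Act(X)\cap\Act(Z)$,
$$ d_Y^\ang(\g_X(X^i),\g_Z(Z^i)) - 4\kappa \leq d_Y^\ang(X,Z) \leq d_Y^\ang(\g_X(X^i),\g_Z(Z^i)) + 4\kappa. $$
The subtle point here is that Lemma \ref{lem;orbit_estimates} as stated fixes $Y$; to get a uniform finite exceptional set valid for all the relevant $Y$ simultaneously I would first use the ``finite filling'' axiom (or, more directly, the properness of $d^\ang$ restricted to $\bbY_i$) to observe that only finitely many $Y\in\bbY_i$ can possibly lie in $\bbY^i_M(X,Z)$ or in $\bbY^i_{M-4\kappa}(\g_X(X^i),\g_Z(Z^i))$ for $M$ above the properness threshold $\Theta$; then the finitely many exceptional sets coming from Lemma \ref{lem;orbit_estimates} applied to each such $Y$ can be unioned.

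With that in hand the proof proceeds as follows. First, finiteness of $\bbY^i_M(X,Z)$: if $M\geq\Theta+12\kappa$ then $M-4\kappa\geq\Theta$, so by the inequality above $\bbY^i_M(X,Z)\subseteq\bbY^i_{M-4\kappa}(\g_X(X^i),\g_Z(Z^i))\cup(\text{finite exceptional set})$, and the former is finite by the \cite{BBF} properness property applied in the coordinate $\bbY_i$. Second, the order: the set $\bbY^i_{M-4\kappa}(\g_X(X^i),\g_Z(Z^i))\cup\{\g_X(X^i),\g_Z(Z^i)\}$ is totally ordered by $\dot{<}$ by the ``Order'' property; restricting this order to the subset $\bbY^i_M(X,Z)$ gives a (partial) order, which by definition is ``the order induced from $\bbY^i_{M-4\kappa}(\g_X(X^i),\g_Z(Z^i))$''. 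The one thing to check is \emph{well-definedness}: that this induced order does not depend on the choices of $X^i,Z^i\in\bbY_i$ nor on the choice of $\g_X,\g_Z$ outside the finite exceptional sets. For this I would compare two valid choices and invoke the quantitative clause of the \cite{BBF} ``Order'' property, namely that for $Y_0\dot{<}Y_1\dot{<}Y_2$ one has $d_{Y_0}(Y_1,Y_2)\leq\kappa$ and $d_{Y_2}(Y_1,Y_0)\leq\kappa$, together with $d_{Y_1}(X,Z)-\kappa\leq d_{Y_1}(Y_0,Y_2)\leq d_{Y_1}(X,Z)$: these say that the order of any three elements of $\bbY^i_{M}(X,Z)$ is detected by the values $d^\ang_{Y}(X,Z)$ up to $\kappa$, hence (combining with the $4\kappa$ estimate above, and using that $M$ is taken large enough that $M-5\kappa$ still exceeds the Behrstock constant $\kappa$ by a controlled margin) is the same for all admissible $\g_X,\g_Z,X^i,Z^i$. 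A symmetry/consistency check using the Behrstock inequality on triples completes the argument that $\dot{<}$ is genuinely a partial order (antisymmetry and transitivity being inherited from the total order in the single coordinate).

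The main obstacle I anticipate is the uniformity of the exceptional set across the varying point $Y$: Lemma \ref{lem;orbit_estimates} is stated for a fixed $Y$ with $X_1,X_2\in\Act(Y)$, and a priori the finite subsets of $\G_{X_1},\G_{X_2}$ it produces depend on $Y$. One must therefore argue that, for the \emph{finitely many} $Y$ that can ever appear in $\bbY^i_M(X,Z)$ for the given $X,Z,M$ (this is where properness of $d^\ang$ on $\bbY_i$ and the ``finite filling'' axiom are used — note that $Y\in\Act(X)\cap\Act(Z)$ is required for the defining inequality to even be stated), the union of the associated exceptional sets is still finite; then ``almost all $\g_X,\g_Z$'' makes sense. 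A secondary, more bookkeeping-level obstacle is tracking which margin between $M$, $M-4\kappa$, $\Theta$, and $\kappa$ is needed so that the \cite{BBF} order statement is applicable to the shifted set and so that the induced order is stable under the $4\kappa$-perturbation; the hypothesis $M\geq\Theta+12\kappa$ is presumably exactly what makes all these inequalities go through with room to spare, and I would verify that in passing rather than optimizing it.
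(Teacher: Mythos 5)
Your overall strategy — transfer $X,Z$ into $\bbY_i$ via the isotropy groups and then invoke the single-coordinate \cite{BBF} properness and Order statements — is the same as the paper's. However, your finiteness argument has a genuine circularity that the paper is careful to avoid, and the way you propose to resolve it does not work.

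The problem is this. Lemma \ref{lem;orbit_estimates} gives, \emph{for each fixed} $Y$, a finite exceptional subset of $\G_X\times\G_Z$ outside of which the $\pm 4\kappa$ estimate holds; this exceptional set a priori depends on $Y$. To pass to a single uniform pair $(\g_X,\g_Z)$ valid for all relevant $Y$, you propose to first note that only finitely many $Y\in\bbY_i$ can lie in $\bbY^i_M(X,Z)$, citing ``properness of $d^\ang$ restricted to $\bbY_i$'' or finite filling. But the properness axiom (both the raw $d^\pi$ one in Definition~\ref{def;CPS} and the derived \cite{BBF} one) is only available when the two endpoints lie \emph{in the same coordinate} $\bbY_i$ as $Y$; here $X$ and $Z$ live in arbitrary coordinates, so it does not apply to $\bbY^i_M(X,Z)$ directly — that finiteness is precisely what is to be proved. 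Finite filling says something about covering active sets and does not bound projection distances. So the step ``observe that only finitely many $Y$ can lie in $\bbY^i_M(X,Z)$'' is circular.

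The paper escapes this by not using Lemma~\ref{lem;orbit_estimates} for finiteness but rather Lemma~\ref{lem;transfert} directly. The crucial feature of Lemma~\ref{lem;transfert} is that its exceptional set $F(10\kappa)\subset\Gamma_X$ is \emph{independent of $Y$} (it comes straight from proper isotropy). One fixes a single $\gamma_X\notin F_X$ and $\gamma_Z\notin F_Z$; then for \emph{every} $Y\in\bbY^i_M(X,Z)$, the dichotomy in that lemma guarantees that at least one of the four pairs $(\eta_X X^i,\eta_Z Z^i)$ with $\eta_X\in\{1,\gamma_X\}$, $\eta_Z\in\{1,\gamma_Z\}$ has both distances $d^\ang_Y(\eta_X X^i, X)\leq\kappa$ and $d^\ang_Y(\eta_Z Z^i, Z)\leq\kappa$. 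Thus $\bbY^i_M(X,Z)$ is covered by the four sets $\bbY^i_{M-3\kappa}(\eta_X X^i,\eta_Z Z^i)$, each finite by single-coordinate properness. Only once finiteness is established this way can the exceptional sets of Lemma~\ref{lem;orbit_estimates} (for the finitely many $Y$) be unioned, which is what the ordering part of the proof does. If you replace your finiteness step with this four-set trick, the rest of your outline — in particular the independence-of-choice argument, which you sketch only loosely but which the paper carries out by deriving $d^\ang_{Y_1}(X,Z)\leq 11\kappa$ from two conflicting orderings — can be made to go through.
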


         \begin{proof}
           Let us first check that the set is finite. We may assume
           that there are  $X^i\in
           \Act(X)\cap \bbY_i$ and  $Z^i\in \Act(Z)\cap \bbY_i$,
           otherwise    $\bbY^i_{M} (X, Z)$   is empty.   By Lemma
           \ref{lem;transfert}, there exists $\gamma_X\in \Gamma_X$,
           $\gamma_Z\in \Gamma_Z$ such that each $Y\in
           \bbY^i_{M} (X, Z)$ is in either one of the four sets
           $\bbY^i_{M-3\kappa} (\eta_X X^i, \eta_Z Z^i)$ for $\eta_X
           \in \{1, \gamma_X\}$ and $\eta_Z
           \in \{1, \gamma_Z\}$. The union of these four sets is
           finite by properness axiom.

           We now need to check that the order on  $ \bbY^i_{M -4\kappa} (\g_X
           (X^i), \g_Z Z^i)$ includes all  $\bbY^i_{M} (X,
           Z)$ and   does not depend on the choice of
           the  points $X^i, Z^i$. By  Lemma
           \ref{lem;orbit_estimates}, for
           arbitrary  choice of   
           points, and for any $Y \in
           \bbY^i _{M} (X, Z)$, there is a finite set of $\G_X$ and of
           $\G_Z$ such that for all elements $\g_X$, $\g_Z$  outside
           these finite sets,    
           $Y \in \bbY^i_{M -4\kappa} (\g_X
           X^i,  \g_Z Z^i)$ (the finite sets depend on the choice of
           $X^i, Z^i$   though).  Since $\bbY^i _{M} (X, Z)$
           is finite, we may find a finite set of $\G_X$ and $\G_Y$
           suitable for all of them.
           Thus, for almost all $\g_X, \g_Z$, 
           all  $ \bbY^i_{M -4\kappa} (\g_X
           (X^i), \g_Z( Z^i) )$ is ordered, and the order, once
           chosen the points $X^i, Z^i$, does not depend on $\g_X, \g_Z$. 

           Assume that for two different choices of  points $X^i,
           Z^i$, namely 
           $(X_a^i, Z_a^i)$ and $(X_b^i, Z_b^i )$,
           the orders are different, and take $Y_1, Y_2$ such that
           $Y_1\dot{<}_a Y_2$ for the first order, and
           $Y_2\dot{<}_b Y_1$ for the other.

           $Y_1\dot{<}_a Y_2$ means that   $d_{Y_1} (Y_2, \g_Z ( Z_a^i )
           ) \leq \kappa$. By the orbit estimate,  $d_{Y_1}^\ang (Y_2,  Z )
            \leq 5\kappa$ for suitable $\g_Z$. 

            $Y_2\dot{<}_b Y_1$ means that $d_{Y_1} (Y_2, \g_X ( X_b^i )
           ) \leq \kappa$, and by the orbit estimate,  $d_{Y_1}^\ang (Y_2,  X )
            \leq 5\kappa$.  Finally, by coarse triangular inequality,
            $d_{Y_1}^\ang (Z,  X)            \leq 11\kappa$,
            contradicting the assumption that  $Y_1$ is in $\bbY^i_{M} (X, Z)$.

         \end{proof}

         \subsection{Convexity}

    \begin{defi}(Convexity)

         Let $L>10\kappa$. We say that a subset $\calW \subset \bbY_*$ is
         $L$-convex if:    
           for all $i$, for all $X, Z \in \calW\cap \bbY_i$, for
           all $j$, the set    $\bbY^j_{L} ( X , Z)$ is a subset
           of $\calW$.

           Let now $\calL=(L(1), \dots, L(m))$ be a $m$-tuple of positive numbers.  
           A subset $\calW$ of $\bbY_*$ is said $\calL$-convex if for all $X,Z
           \in \calW$, of same coordinate $i(X) = i(Z)$, and for all $j$, the
           set $\bbY^j_{L(j)}(X,Z) $ is a subset $ \calW$. 

  \end{defi}

  Note that being $L$-convex, for  $L>0$ is equivalent to being
  $(L,  \dots, L)$-convex.

\begin{defi}
           Let $\calW\subset \bbY_*$ non-empty, and $R\in \bbY_*\setminus \calW$ for which $\Act(R) \cap \calW$
           is non-empty. Let $L\geq 10\kappa$. 
           Define $\bbY_L(\calW, R)$ as the set
           of $Y\in \bbY_*$ satisfying the following. 
           \begin{itemize}
             \item $Y\in \Act(R)$ 
             \item $Y\notin \calW$
             \item $\calW \cap \Act(R)\cap\Act(Y)$ is non-empty, and
               for all $X\in\calW \cap \Act(R)\cap\Act(Y)$, one has $Y\in
               \bbY^{i(Y)}_L(X, R)$.
           \end{itemize}
\end{defi}

           \begin{prop}\label{prop;intervals_are_finite}
 Assume that for all $X\in \calW$,  $\calW$ is invariant by
            an infinite group  $\Gamma_X$  of rotations around $X$, 
             with proper isotropy.
             If $L\geq \Theta + 12\kappa$,
             then for all $R$  for which it is defined, the set $\bbY_L(\calW, R)$ is finite.
           \end{prop}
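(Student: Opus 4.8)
The plan is to enclose $\bbY_L(\calW,R)$ in a \emph{finite} union of sets of the form $\bbY^j_L(X,R)$ with $X$ ranging over finitely many points of $\calW$, and then to bound each such set by pushing the $X$--endpoint into the coordinate $j$ with the transfer lemma and finishing with properness. The key point to keep in mind is that, contrary to Proposition~\ref{prop;induced_order}, the second endpoint $R$ is \emph{not} in $\calW$, so there is no rotation group with proper isotropy fixing $R$; the finite filling axiom is what lets us avoid ever having to transfer the $R$--side.

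First I would do the finite filling reduction. Apply the finite filling axiom to $\calZ=\calW\cap\Act(R)$ (non-empty by hypothesis) to obtain finitely many $X_1,\dots,X_k\in\calW\cap\Act(R)$ with $\bigcup_i\Act(X_i)\supseteq\bigcup_{X\in\calW\cap\Act(R)}\Act(X)$. Let $Y\in\bbY_L(\calW,R)$. By definition the set $\calW\cap\Act(R)\cap\Act(Y)$ contains some $X_Y$, and since $X_Y\in\Act(Y)$, symmetry in action gives $Y\in\Act(X_Y)\subseteq\bigcup_i\Act(X_i)$, so $Y\in\Act(X_{i_0})$ for some $i_0\le k$; symmetry in action again gives $X_{i_0}\in\Act(Y)$, hence $X_{i_0}\in\calW\cap\Act(R)\cap\Act(Y)$. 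The ``for all $X$'' clause in the definition of $\bbY_L(\calW,R)$ then forces $Y\in\bbY^{i(Y)}_L(X_{i_0},R)$. Therefore $\bbY_L(\calW,R)\subseteq\bigcup_{i\le k}\bigcup_{j\le m}\bbY^j_L(X_i,R)$, and since $k$ and $m$ are finite it remains to show each $\bbY^j_L(X_i,R)$ is finite.

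Next, fix $i,j$ and assume $\bbY^j_L(X_i,R)\neq\emptyset$, so $\bbY_j\cap\Act(X_i)\neq\emptyset$; pick any $X^j$ in it. Since $X_i\in\calW$, the group $\Gamma_{X_i}$ is infinite with proper isotropy, so I can fix some $\gamma\in\Gamma_{X_i}\setminus F(10\kappa)$. For every $Y\in\bbY^j_L(X_i,R)$ one has $Y\in\Act(X_i)$ and $X^j\in\Act(X_i)$, so Lemma~\ref{lem;transfert} gives $d^\ang_Y(X^j,X_i)\le\kappa$ or $d^\ang_Y(\gamma X^j,X_i)\le\kappa$ (off the at most two points $Y\in\{X^j,\gamma X^j\}$, which do not affect finiteness). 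All of $X^j,\gamma X^j,X_i,R$ lie in $\Act(Y)$ (using $\bbY_{i(Y)}\subseteq\Act(Y)$ for $X^j,\gamma X^j$, symmetry in action together with $Y\in\Act(R)$ for $R$, and $Y\in\Act(X_i)$ for $X_i$), so combining with $d^\ang_Y(X_i,R)\ge L$ and the triangle inequality (with its $\kappa$ defect) gives $Y\in\bbY^j_{L-2\kappa}(X^j,R)\cup\bbY^j_{L-2\kappa}(\gamma X^j,R)$. Finally, each of these two sets is finite: an element $Y$ lies in $\bbY_j$, the base point ($X^j$ or $\gamma X^j$) lies in $\bbY_j$ as well, and $d^\ang_Y(X^j,R)\ge L-2\kappa\ge\Theta+10\kappa$ forces $d^\pi_Y(X^j,R)>\theta$ by coarse equality (or by equality when $R\notin\bbY_j$), so properness in the coordinate $j$ leaves only finitely many $Y$. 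This is exactly where the hypothesis $L\ge\Theta+12\kappa$ is used, and it completes the argument.

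The step I expect to be the real obstacle is the finiteness of $\bbY^j_L(X_i,R)$, precisely because $R\notin\calW$ provides no proper-isotropy subgroup fixing $R$, so one cannot transfer the $R$--endpoint into coordinate $j$ the way Proposition~\ref{prop;induced_order} transfers both endpoints. The finite filling reduction is what circumvents this: it replaces the a priori unbounded family of $\calW$--points ``near $R$'' that could witness membership in $\bbY_L(\calW,R)$ by one of finitely many fixed $X_i\in\calW$, so only the $X_i$--side needs transferring while the $R$--side is absorbed by raw properness; carrying a $Y$--dependent witness $X_Y$ through to the end would block any uniform application of properness.
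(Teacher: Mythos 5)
Your proof is correct and follows essentially the same route as the paper: the finite filling axiom reduces $\bbY_L(\calW,R)$ to a finite union of sets of the form $\bbY^{j}_L(X_i,R)$ with the $X_i$ a finite filling family of $\calW\cap\Act(R)$, and each such set is then shown to be finite. The only difference is in the last step, where the paper simply cites Proposition~\ref{prop;induced_order} (whose proof transfers \emph{both} endpoints and thus formally wants proper isotropy at $R$, which the hypotheses do not literally supply since $R\notin\calW$), whereas you transfer only the $\calW$-side endpoint via Lemma~\ref{lem;transfert} and conclude with properness in coordinate $j$ directly; this is a slightly more careful variant of the same argument, not a different approach.
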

           \begin{proof}
           From the definition, $\bbY_L(\calW, R) \subset
           \bigcup_i \bigcap_{X\in\Act(R)\cap \calW} ( \bbY^{i}_L(X,
           R) \cup (\bbY_i \setminus \Act(X)))
          $.   By finite filling assumption on the projection system, there 
          is a finite collection of elements $X_j\in \calW\cap
          \Act(R)$ such that $\cup_j
            \Act(X_j)$ covers  $\cup_{\calW\cap\Act(R)} \Act(X)$.

          In particular,  $\bbY_L(\calW, R)$ is inside a finite union of sets of
           the form 
           $\bbY^{i}_L(X_j, R)$ which are finite by Proposition
           \ref{prop;induced_order}. 
           \end{proof}
           \begin{prop}\label{prop;included}
             Assume that for all $X\in \calW$,  $\calW$ is invariant by
            an infinite group  $\Gamma_X$  of rotations around $X$, 
             with proper isotropy. Let $L\geq \Theta +12\kappa$. 
 
             If $\calW$ is $(L-6\kappa)$-convex,  
             and if $S\in  \bbY_L(\calW, R) $ then $\bbY_{L}(\calW, S)
             \subset \bbY_{L-2\kappa} (\calW, R)$. 

             Moreover, if $\calW'$ contains $\calW$, then  $\bbY_{L}(\calW', R)
             \subset \bbY_{L} (\calW, R)$.
           \end{prop}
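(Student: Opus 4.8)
Forward-looking, I would prove both assertions by unwinding the definition of $\bbY_L(\cdot,\cdot)$ and checking, one at a time, the three conditions for $Y$ to lie in the target set: $Y\in\Act(R)$, $Y\notin\calW$, and the quantified lower bound on $d^\ang_Y(X,R)$ over $X\in\calW\cap\Act(R)\cap\Act(Y)$ (together with nonemptiness of that set). The only tools I expect to need are the Behrstock and triangle inequalities for $d^\ang$, closeness in inaction, Lemma~\ref{lem;bet_trans}, and the hypotheses that $\calW$ is $(L-6\kappa)$-convex and carries the rotating families; the constant slacks $6\kappa$ and $2\kappa$ are present to absorb the coarseness of these estimates.

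For the first assertion, I would fix $Y\in\bbY_L(\calW,S)$ together with a witness $X_2\in\calW\cap\Act(S)\cap\Act(Y)$, so that $d^\ang_Y(X_2,S)\geq L$ because $Y\in\bbY_L(\calW,S)$. The condition $Y\notin\calW$ is immediate. To obtain $Y\in\Act(R)$ I would argue by contradiction: if $R\notin\Act(Y)$, then, since $S$ lies in $\Act(R)\cap\Act(Y)$, closeness in inaction forces $d^\ang_S(R,Y)\leq\theta$; combined with the Behrstock consequence $d^\ang_S(X_2,Y)\leq\kappa$ of $d^\ang_Y(X_2,S)\geq L$, the triangle inequality yields $d^\ang_S(X_2,R)\leq 2\kappa+\theta<L$, which I then contradict using that $X_2$ (chosen appropriately) lies in $\calW\cap\Act(R)\cap\Act(S)$, so that $S\in\bbY_L(\calW,R)$ gives $d^\ang_S(X_2,R)\geq L$. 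For the quantified clause I would take $X\in\calW\cap\Act(R)\cap\Act(Y)$; the main case is $X\in\Act(S)$, and there it is clean: $X\in\calW\cap\Act(R)\cap\Act(S)$ with $S\in\bbY_L(\calW,R)$ gives $d^\ang_S(X,R)\geq L$, while $X\in\calW\cap\Act(S)\cap\Act(Y)$ with $Y\in\bbY_L(\calW,S)$ gives $d^\ang_Y(X,S)\geq L$, and Lemma~\ref{lem;bet_trans} applied to these two estimates (both $>10\kappa$) gives $d^\ang_Y(X,R)\geq d^\ang_Y(X,S)-2\kappa\geq L-2\kappa$, exactly what membership in $\bbY_{L-2\kappa}(\calW,R)$ requires.

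The hard part will be the remaining case $X\notin\Act(S)$ (and the analogous subcase in the verification of $Y\in\Act(R)$, where one must also rule out $X_2\notin\Act(R)$). The mechanism I would use: closeness in inaction at $X\notin\Act(S)$, evaluated at the points $Y$ and $R$ of $\Act(X)\cap\Act(S)$, gives $d^\ang_Y(X,S)\leq\theta$ and $d^\ang_R(X,S)\leq\theta$; a triangle estimate against the witness $X_2$ then gives $d^\ang_Y(X,X_2)\geq d^\ang_Y(X_2,S)-\theta-\kappa\geq L-6\kappa$, and a short closeness-in-inaction argument (evaluated at $Y$) shows $X\in\Act(X_2)$. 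If $X$ and $X_2$ lie in a common coordinate, $(L-6\kappa)$-convexity of $\calW$ puts $Y\in\bbY^{i(Y)}_{L-6\kappa}(X,X_2)\subseteq\calW$, contradicting $Y\notin\calW$; symmetrically, pairing $X$ with a witness $X_1\in\calW\cap\Act(R)\cap\Act(S)$ and using $d^\ang_R(X,S)\leq\theta$ forces $S\in\calW$, contradicting $S\notin\calW$. The genuine obstacle is the coordinate bookkeeping: one must arrange that the two $\calW$-points fed into convexity share a coordinate and lie in the required active sets. This is where I would use that $\calW$ is invariant under the infinite rotation groups $\Gamma_X$ with proper isotropy for $X\in\calW$: they spread $\calW$ out inside each coordinate it meets, so a same-coordinate witness with the right active-set memberships can be produced. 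Tracking which active sets each chosen point belongs to is the real content of the argument; everything else is Behrstock plus the triangle inequality.

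The second assertion is essentially formal. If $Y\in\bbY_L(\calW',R)$, then $Y\in\Act(R)$ and $Y\notin\calW'$, hence $Y\notin\calW$ since $\calW\subseteq\calW'$; and $\calW\cap\Act(R)\cap\Act(Y)\subseteq\calW'\cap\Act(R)\cap\Act(Y)$, so the hypothesis that $Y\in\bbY^{i(Y)}_L(X,R)$ for every $X$ in the larger set holds a fortiori for every $X$ in the smaller one. The only point requiring work is nonemptiness of $\calW\cap\Act(R)\cap\Act(Y)$: I would start from a point of $\calW\cap\Act(R)$ (which exists since $\bbY_L(\calW,R)$ is defined) and run the same Behrstock/closeness-in-inaction analysis against a witness in $\calW'\cap\Act(R)\cap\Act(Y)$; in the applications of this lemma each point of $\calW'\setminus\calW$ is itself active for a point of $\calW\cap\Act(R)$, and nonemptiness is then immediate.
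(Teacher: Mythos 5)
Your proposal follows essentially the same route as the paper's proof: take an arbitrary $\tilde X\in\calW\cap\Act(R)\cap\Act(Y)$, handle the case $\tilde X\in\Act(S)$ by the transitivity-of-betweenness lemma (both $d^\ang_Y(\tilde X,S)$ and $d^\ang_S(\tilde X,R)$ being $\geq L>10\kappa$), and rule out $\tilde X\notin\Act(S)$ by closeness in inaction, a transfer into a common coordinate via the $\Gamma$-invariance with proper isotropy, and $(L-6\kappa)$-convexity, the second assertion being essentially formal. You are more explicit than the paper about the side conditions ($Y\in\Act(R)$ and nonemptiness of $\calW\cap\Act(R)\cap\Act(Y)$), which the paper treats as implicit consequences of Lemma~\ref{lem;bet_trans} and the definitions; that care is appropriate, and the coordinate-bookkeeping step you flag as the remaining work is precisely the transfer argument the paper invokes in one line.
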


           \begin{proof}
           Let $Y \in \bbY_L(\calW, S)$ in coordinate $i$. There
           exists $X\in \calW \cap \Act(Y)\cap \Act(S)$ such that $d^\ang_Y(X,S) \geq L$.

           Assume
           that $\tilde X \in \Act(R)\cap \Act(Y) \cap \calW$.  If it
           is not in $\Act(S)$, then $d^\ang_Y(\tilde X, S) <\kappa$
           and $d_Y^\ang(\tilde X, X) > L-2\kappa$. Transfering
           $\tilde X$ in the coordinate of $X$ (by invariance under
           $\Gamma_{\tilde X}$), one has 
           $d_Y^\ang(\tilde X_{i(X)}, X) > L-6\kappa$.  By convexity,
           $Y\in \calW$ though we assumed otherwise. 
           Therefore,  $\tilde X \in \Act(S)$. Therefore, by
           definition of $\bbY_L(\calW,S)$, one has $d_Y^\ang(\tilde
           X, S) \geq L$, but also $d_S^\ang(\tilde X, R) \geq
           L$.   It follows by transitivity of betweenness (Lemma
           \ref{lem;bet_trans}) that
           $d_Y^\ang(\tilde X, R) \geq L-2\kappa$.

           The second assertion is a direct consequence of the definition.
           \end{proof}

           \begin{prop} \label{prop;firstguy}
 Assume that for all $X\in \calW$,  $\calW$ is invariant by
            an infinite group  $\Gamma_X$  of rotations around $X$, 
             with proper isotropy.
             If $\Act(R)\cap\calW$ is not empty,
             for all $L\geq (2m+12)\kappa +\Theta$, there exists $Z\in
             \bbY_L(\calW, R)$ such that $\bbY_{L-2m\kappa} (\calW, Z)  = \emptyset$.
           \end{prop}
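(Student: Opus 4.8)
The plan is to choose $Z$, among the finitely many elements of $\bbY_L(\calW,R)$, to be the one ``closest to $\calW$'' for the order coming from the projection complexes, and then to rule out any $S$ lying $(L-2m\kappa)$-between $\calW$ and $Z$. The slack $2m\kappa$ should be read as the budget one spends, one coordinate at a time (there are $m$ of them), in transferring the relevant elements of $\calW$ into the coordinate of a given point via their rotation groups, together with the Behrstock and triangle--inequality corrections this entails.

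First I would record that $\bbY_L(\calW,R)$ is finite --- this is Proposition~\ref{prop;intervals_are_finite}, applicable since $L\ge(2m+12)\kappa+\Theta\ge\Theta+12\kappa$ --- and, in the situation at hand, non-empty. Using the finite filling axiom I would fix finitely many $X_1,\dots,X_n\in\calW\cap\Act(R)$ with $\bigcup_j\Act(X_j)\supseteq\bigcup_{X\in\calW\cap\Act(R)}\Act(X)$ and observe, exactly as in the proof of Proposition~\ref{prop;intervals_are_finite}, that any $Y\in\bbY_L(\calW,R)$ lies in some $\Act(X_j)$ and then satisfies $d^\ang_Y(X_j,R)\ge L$ for every such $j$; in particular $\bbY_L(\calW,R)\subseteq\bigcup_{i,j}\bbY^{i}_L(X_j,R)$, a finite union of sets each carrying the partial order of Proposition~\ref{prop;induced_order} with $X_j$ smallest and $R$ greatest.

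Next I would equip $\bbY_L(\calW,R)$ with the relation ``$S\prec Y$'' meaning $S\in\bbY_{L-2m\kappa}(\calW,Y)$, comparing elements lying in different coordinates through a common filling element $X_j\in\Act(S)\cap\Act(Y)$, transferred into the appropriate coordinate by invariance of $\calW$ under $\Gamma_{X_j}$ as in Propositions~\ref{prop;induced_order} and~\ref{prop;included}. Since $L-2m\kappa>10\kappa$, Behrstock's inequality together with transitivity of betweenness (Lemma~\ref{lem;bet_trans}) make $\prec$ irreflexive and acyclic, so it has a minimal element on the finite set $\bbY_L(\calW,R)$; let $Z$ be such a minimal element. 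To conclude I would show $\bbY_{L-2m\kappa}(\calW,Z)=\emptyset$: given $S$ in this set, I would first check $S\in\Act(R)$ and $S\notin\calW$ --- if $S\notin\Act(R)$, closeness in inaction gives $d^\pi_Z(S,R)\le\theta$, while a point $X\in\calW\cap\Act(Z)\cap\Act(S)$ with $d^\ang_S(X,Z)\ge L-2m\kappa$ forces, via Behrstock and the filling elements, $d^\ang_Z(X,R)$ to be a bounded multiple of $\kappa$, contradicting $Z\in\bbY_L(\calW,R)$ and $L\gg\kappa$; then, for any $\tilde X\in\calW\cap\Act(R)\cap\Act(S)$, transferring $\tilde X$ into the coordinate of $S$ and combining ``$S$ is $(L-2m\kappa)$-between $\calW$ and $Z$'' with ``$Z$ is $L$-between $\calW$ and $R$'' through transitivity of betweenness, I would conclude that $S$ is between $\tilde X$ and $R$ at a scale which, once the per-coordinate losses are absorbed into the $2m\kappa$ slack, is still at least $L$; hence $S\in\bbY_L(\calW,R)$ and $S\prec Z$, contradicting minimality.

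The hard part will be precisely this last step: organising the chain of transfers and of applications of Behrstock's and the triangle inequality so that the total accumulated error is genuinely at most $2m\kappa$, so that the candidate $S$ really re-enters $\bbY_L(\calW,R)$ rather than only some $\bbY_{L'}(\calW,R)$ with $L'<L$ (which would break the contradiction). This is exactly the place where one needs the hypothesis $L\ge(2m+12)\kappa+\Theta$ at full strength, together with the orbit/transfer estimates of Lemmas~\ref{lem;transfert}--\ref{lem;orbit_estimates} and the induced-order statement of Proposition~\ref{prop;induced_order}.
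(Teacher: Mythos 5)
Your proposal diverges from the paper's proof, and the divergence is exactly where you flag it yourself: the last step does not close.

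The paper does not pick a single ``globally closest'' element $Z$ for a flat relation on $\bbY_L(\calW,R)$. It works one coordinate at a time: if $R$ has links in $k$ coordinates, one picks a coordinate $i$ and the minimal $Z_i\in\bbY_L(\calW,R)\cap\bbY_i$ for the induced order of Proposition~\ref{prop;induced_order}. The point is that $Z_i$'s link set (at a threshold lowered by $2\kappa$) is contained in that of $R$ but has \emph{nothing left in coordinate} $i$. Iterating kills one coordinate per step and costs $2\kappa$ per step; after at most $m$ steps the link set is empty, with a total loss of $2m\kappa$. The budget $2m\kappa$ is thus consumed gradually, one coordinate at a time, and the monotone-decreasing threshold is built into the iteration.

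Your version tries to spend the whole $2m\kappa$ budget at once, in the definition of $\prec$, and then hopes that any $S\in\bbY_{L-2m\kappa}(\calW,Z)$ will ``re-enter'' $\bbY_L(\calW,R)$ and give $S\prec Z$. This cannot happen. Transitivity of betweenness (Lemma~\ref{lem;bet_trans}) only \emph{loses} a fixed $2\kappa$: from $d^\ang_S(X,Z)\ge L-2m\kappa$ and $d^\ang_Z(X,R)\ge L$ one gets at best $d^\ang_S(X,R)\ge L-2m\kappa-2\kappa$, which puts $S$ in $\bbY_{L-(2m+2)\kappa}(\calW,R)$ — a \emph{larger} set than $\bbY_L(\calW,R)$, not a smaller one. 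So there is no way to promote $S$ to a member of $\bbY_L(\calW,R)$, and hence no contradiction with $\prec$-minimality of $Z$. This is not a matter of ``organising the chain of transfers''; the inequality runs in the wrong direction, and no amount of bookkeeping at the fixed threshold $L-2m\kappa$ recovers it. The coordinate-by-coordinate scheme is the idea that makes the argument go through, because there the set being controlled (the set of coordinates carrying a link) genuinely shrinks at each step even though the threshold decreases.

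A secondary gap: the asserted acyclicity of $\prec$. For a $2$-cycle one can indeed find a common $X\in\calW\cap\Act(S)\cap\Act(Y)$ and apply Behrstock. For longer cycles, each defining condition $S_j\in\bbY_{L-2m\kappa}(\calW,S_{j+1})$ quantifies over its own set $\calW\cap\Act(S_j)\cap\Act(S_{j+1})$, and there is no guarantee of a single $X$ common to all links of the cycle. The reference to filling elements and orbit transfers gestures at a fix but does not supply one; this too is handled automatically in the paper's version, because there each comparison is made inside a \emph{single} coordinate $i$, where $\bbY^i_L(X_j,R)$ carries an actual total order by Proposition~\ref{prop;induced_order}.

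To repair your proof you would essentially have to rediscover the paper's iteration: replace the single relation $\prec$ by a sequence of choices, one per coordinate, each time passing to a minimal element in $\bbY_{L-2(j-1)\kappa}(\calW,Z^{(j-1)})\cap\bbY_{i_j}$ and using Proposition~\ref{prop;included} (together with minimality) to show coordinate $i_j$ is now permanently empty. At that point the proof is the paper's.
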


           \begin{proof}
           Let us say that $R$ has $k$  $L$-links to $\calW$ if $\{ i,
           \bbY_{L}(R,\calW) \cap \bbY_i \neq \emptyset \}$ has $k$
           elements.

           For any such index $i$, take a minimal item $Z_i$ in
           $\bbY_{L}(R,\calW) \cap \bbY_i $ for the order of
           Proposition \ref{prop;induced_order}. Then, by
           Proposition \ref{prop;included}, $\bbY_{L-2\kappa}
           (\calW, Z_i)$ is included in $\bbY_{L}(R,\calW) $, thus
           $Z_i$ has at most $(k-1)$  $(L-2\kappa)$-links to
           $\calW$. 

           Iterating this choice at most $m$ times, we find an 
           element $Z$ that has no $(L-2m\kappa)$-links to
           $\calW$. Therefore $\bbY_{L-2m\kappa} (\calW, Z)  = \emptyset$.
           \end{proof}

           \begin{prop}\label{prop;osc_cvx}

             Let $L\geq \Theta +12\kappa$. Consider $\calW$, and assume it  is 
             $L$-convex, and that for all $X\in \calW$, there is
             $\Gamma_X<G_X$, infinite, that leaves $\calW$ invariant
             and that has proper isotropy.
 
             If $\bbY_{L'}(\calW, R)$ is
             well defined and empty, then $\calW\cup\{R\}$ is $(L+L'+5\kappa)$-convex.
           \end{prop}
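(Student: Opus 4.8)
The plan is to verify the defining condition of $(L+L'+5\kappa)$-convexity head on. Fix a coordinate $i$, elements $X,Z\in(\calW\cup\{R\})\cap\bbY_i$, a coordinate $j$, and $Y\in\bbY^j_{L+L'+5\kappa}(X,Z)$; I must show $Y\in\calW\cup\{R\}$. If $X=Z$ the set $\bbY^j_{L+L'+5\kappa}(X,Z)$ is empty by separation, so there is nothing to do. If both $X$ and $Z$ lie in $\calW$, then $L$-convexity of $\calW$ together with $L+L'+5\kappa\geq L$ gives $Y\in\bbY^j_L(X,Z)\subset\calW$. Using the symmetry $\bbY^j_M(X,Z)=\bbY^j_M(Z,X)$, the only remaining case is $Z=R$ and $X\in\calW\cap\bbY_{i(R)}$; note then that $Y\neq R$, since $d^\ang_R(X,R)$ is not defined. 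I argue by contradiction, assuming $Y\notin\calW$.

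The crux is to use the emptiness of $\bbY_{L'}(\calW,R)$. First observe that $X$ itself lies in $\calW\cap\Act(R)\cap\Act(Y)$: indeed $X\in\bbY_{i(R)}\subset\Act(R)$, and $Y\in\Act(X)$ forces $X\in\Act(Y)$ by symmetry in action. So $\calW\cap\Act(R)\cap\Act(Y)\neq\emptyset$; moreover $Y\in\Act(R)$ and $Y\notin\calW$. If in addition $d^\ang_Y(X'',R)\geq L'$ held for every $X''\in\calW\cap\Act(R)\cap\Act(Y)$, then $Y$ would belong to $\bbY_{L'}(\calW,R)$, contradicting its emptiness. Hence there is $X'\in\calW\cap\Act(R)\cap\Act(Y)$ with $d^\ang_Y(X',R)<L'$. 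The triangle inequality then yields $d^\ang_Y(X,X')\geq d^\ang_Y(X,R)-d^\ang_Y(X',R)-\kappa>(L+L'+5\kappa)-L'-\kappa=L+4\kappa$.

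It remains to turn this into a contradiction with the $L$-convexity of $\calW$. If $i(X')=i$, then $X,X'\in\calW\cap\bbY_i$ and $Y\in\bbY_j\cap\Act(X)\cap\Act(X')$ with $d^\ang_Y(X,X')\geq L$, so $Y\in\bbY^j_L(X,X')\subset\calW$ — contradiction. If $i(X')\neq i$, I transfer $X$ (not $X'$) into coordinate $i$: since $\Gamma_{X'}$ is infinite, has proper isotropy, leaves $\calW$ invariant, and acts by automorphisms preserving each $\bbY_i$, Lemma \ref{lem;transfert} applied with the group $\Gamma_{X'}$, the point $Y\in\Act(X')$, and $X\in\Act(Y)$ gives, for all $\gamma\in\Gamma_{X'}$ outside the finite set $F(10\kappa)$, that either $d^\ang_Y(X,X')\leq\kappa$ or $d^\ang_Y(\gamma X,X')\leq\kappa$. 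The first alternative is excluded, so $d^\ang_Y(\gamma X,X')\leq\kappa$ (one checks $\gamma X\in\Act(Y)$ via closeness in inaction, exactly as in the proof of Proposition \ref{prop;included}). Then $d^\ang_Y(X,\gamma X)\geq d^\ang_Y(X,X')-d^\ang_Y(\gamma X,X')-\kappa\geq L+2\kappa\geq L$, while $\gamma X\in\calW\cap\bbY_i$, so again $Y\in\bbY^j_L(X,\gamma X)\subset\calW$ — contradiction. Thus $Y\in\calW$, which proves the proposition.

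The step I expect to cost the most care is the last one: the coordinate transfer together with the bookkeeping of which projection distances are defined, i.e. the repeated use of symmetry in action and closeness in inaction to ensure $\gamma X\in\Act(Y)$ and that the triangle and Behrstock inequalities are legitimately applied. Once the witness $X'$ has been extracted from the emptiness of $\bbY_{L'}(\calW,R)$, the remaining work is a short distance estimate.
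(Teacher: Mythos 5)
Your proof is correct and follows essentially the same approach as the paper's: reduce to the case $Z=R$, transfer by the rotation group $\Gamma_{X'}$ to land in $\calW\cap\bbY_{i(R)}$ near $X'$, and then invoke $L$-convexity of $\calW$ to force $Y\in\calW$. The only cosmetic difference is that you argue by extracting a witness $X'$ with $d^\ang_Y(X',R)<L'$ from the emptiness of $\bbY_{L'}(\calW,R)$, whereas the paper shows directly that every $X'\in\calW\cap\Act(R)\cap\Act(Y)$ satisfies $d^\ang_Y(R,X')\geq L'+2\kappa$ and then derives the contradiction $Y\in\bbY_{L'+2\kappa}(\calW,R)$; these are the same argument read in opposite directions.
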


           \begin{proof}
             If $\calW\cap \bbY_{i(R)}$ is empty, there is nothing to
             prove. We assume it is non-empty.
           Consider $Y\in \bbY_{L+L'+5\kappa}(R, X)$ for some $X\in
           \calW\cap \bbY_{i(R)}$, and assume that $Y\notin
           \calW$. Notice that $Y\in \Act(R)$ though, and $X\in
           \Act(R)$ since they have same coordinate. 
            Hence,  $X\in \calW\cap\Act(R)\cap
           \Act(Y)$. 

           Let $X'$ be any other element of  $\calW\cap\Act(R)\cap
           \Act(Y)$.  Transfer $X'$ in the coordinate $i=i(R)$, inside
           $\calW$, by 
           $\Gamma_{X'}$. 
           There exists $X'_i \in \bbY_i \cap \calW$ such
           that $d_Y^\ang(X', X'_i) \leq \kappa$. But, $\calW$ being
           $L$-convex, one has $d_Y^\ang(X', X'_i) \leq  L$. It
           follows by triangular inequality, that
           $d_Y^{\ang}(R,X')\geq L'+2\kappa$. Since this is true for
           all $X'$ as above, it follows that $Y\in \bbY_{L'
             +2\kappa}(\calW, R)$, contradicting our assumption.

           \end{proof}

\section{Composite rotating families and windmills}

     We proceed to adapt the rotating families study of \cite{DGO} to
     the context of composite projection systems.

    \subsection{Definition}

    \begin{defi}(Composite rotating family)\label{def;CRF}

    A composite rotating family on a composite projection system,
    endowed with an action of a group $G$  by isomorphisms,   
    is a  family of subgroups $\G_Y,
    Y\in \bbY_*$ such that   
    \begin{itemize}
      \item for all $X\in \bbY_*, \G_X < G_X= {\rm
     Stab}_G(X)$,   is an infinite group of rotations around $X$, with proper isotropy   
      \item for all $g\in G$, and all $X\in \bbY_*$ , one
    has  $\G_{gX}= g\G_X g^{-1}$
    \item if $X\notin \Act(Z)$ then $\Gamma_X$ and $\Gamma_Z$ commute, 
      \item for all $i$, for all $X,Y, Z \in \bbY_i$, if $d_Y(X,Z)
        \leq \Theta_P $
        then for all
        $g\in \G_Y\setminus\{1\}$,  $d_Y(X,gZ) \geq \Theta_{Rot}$. 
    \end{itemize}

    \end{defi}

    We will show the following.

    \begin{theo} \label{theo;main}

      Consider $\bbY_*$ a composite projection system. 
      If $\{\Gamma_Y, Y\in \bbY_*\}$ is a composite rotating family
      for sufficiently large $\Theta_{Rot}$, then the group
      $\Gamma_{Rot}$ generated
      by $\bigcup_{Y\in \bbY_*} \Gamma_Y$,  
      has a partially commutative
      presentation. 

      More precisely, two presentations of $\Gamma_{Rot}$ are 
 $$  \Gamma_{rot} \simeq  \langle \bigcup_{Y\in \bbY_*} \Gamma_Y \;|\;  \begin{array}{cc} \forall Y, \forall Y'\notin \Act(Y)  &   [\Gamma_Y, \Gamma_{Y'}]=1    \\  \forall Y, \forall g\in \Gamma_{rot}  & \Gamma_{gY} = g\Gamma_Y g^{-1}   \end{array}    \rangle$$
and, for a certain  $\calS\subset
      \bbY_*$,
$$    \Gamma_{rot} \simeq  \langle \bigcup_{Y\in \calS} \Gamma_Y  \;|\;  \begin{array}{cc}    \forall Y,Y' \in \calS,  &  \\ \forall   w /\,  Y\notin \Act( wY'), &   [s,ws'w^{-1}]=1   \\   \forall s \in Y, s'\in Y'   &       \end{array}  \rangle     $$
  \end{theo}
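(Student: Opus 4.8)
\textbf{Plan of proof of Theorem~\ref{theo;main}.}
The plan is to adapt the \emph{windmill} induction of \cite{DGO}, run simultaneously in the $m$ coordinates of $\bbY_*$ --- the ``plate spinning'' of the title. First I would isolate the inductive object: a \emph{windmill} is a pair $(\calW,\calL)$ where $\calW\subset\bbY_*$ is non-empty, $\calL=(L(1),\dots,L(m))$ is an $m$-tuple of convexity constants each lying between roughly $c_*$ and $\Theta_P$, and $\calW$ is $\calL$-convex, invariant under $\Gamma_\calW:=\langle\Gamma_X:X\in\calW\rangle$, such that $\Gamma_\calW$ admits a presentation of the first type in Theorem~\ref{theo;main} over $\bigcup_{X\in\calW}\Gamma_X$ (relators: the conjugation relations $\Gamma_{gX}=g\Gamma_Xg^{-1}$, and the commutations $[\Gamma_X,\Gamma_{X'}]=1$ for $X'\notin\Act(X)$), and moreover enjoys the Greendlinger shortening property promised in the introduction. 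The base case is the singleton $\calW_0=\{X_0\}$: it is $\calL$-convex for every $\calL$ with entries $>\theta$ by the separation axiom, $\Gamma_{\calW_0}=\Gamma_{X_0}$, and the presentation is empty.

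Then I would prove the enlargement step. Given a windmill $(\calW,\calL)$ and a target coordinate $i$: if no element of $\bbY_i$ has active set meeting $\calW$ there is nothing to do; otherwise use Proposition~\ref{prop;firstguy} to produce, near $\calW$, a rotation point $R$ with $\bbY_{L-2m\kappa}(\calW,R)=\emptyset$, i.e.\ ``seen cleanly'' from $\calW$; let $\calS$ be a $\Gamma_\calW$-transversal of the $\Gamma_\calW$-orbits of such clean points abutting $\calW$ (finite up to $\Gamma_\calW$ by Propositions~\ref{prop;intervals_are_finite} and \ref{prop;induced_order}), and set $\Gamma_{\calW'}:=\langle\Gamma_\calW,\{\Gamma_S:S\in\calS\}\rangle$, $\calW':=\Gamma_{\calW'}\cdot(\calW\cup\calS)$. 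Two things must be proved. First, in coordinate $i(R)$ the set $\calW'$ carries a tree structure --- vertices the $\Gamma_{\calW'}$-translates of $\calW$ (the petals) and of the points of $\calS$, an edge joining $g\calW$ to $gS$ when $gS$ abuts it, edge stabilizers trivial by the triviality-of-intersection argument of \cite{DGO} (using the rotating-family inequality $d_Y(X,gZ)\geq\Theta_{Rot}$, the order of Proposition~\ref{prop;induced_order}, betweenness transitivity Lemma~\ref{lem;bet_trans}, and proper isotropy) --- so that by Bass--Serre theory $\Gamma_{\calW'}$ is a free product of $\Gamma_\calW$ with the new rotation groups; combined with the commutations automatically present (a new rotation point outside $\Act$ of an old one commutes with it, third bullet of Definition~\ref{def;CRF}) and Tietze moves, this upgrades the presentation to one of $\Gamma_{\calW'}$ of the required form, the Greendlinger property being inherited. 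Second, $\calW'$ is $\calL'$-convex for an explicit $\calL'$: in coordinate $i(R)$ the tree makes $\calW'$ \emph{very} convex (constant $\sim c_*$), while in each other coordinate $j$ convexity drops by only $O(\kappa)$, controlled by Propositions~\ref{prop;included} and \ref{prop;osc_cvx} --- one pulls a would-be violator $Y\in\bbY^j_{L'(j)}(X,Z)\setminus\calW'$ back to a configuration inside a single petal of $\calW$ using $\Gamma$-invariance and the transfer/orbit estimates (Lemmas~\ref{lem;transfert}, \ref{lem;orbit_estimates}), losing $O(\kappa)$, and then invokes $\calL$-convexity of $\calW$.

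With the enlargement step in hand the plate spinning is a scheduling argument: cycle through the coordinates $1,2,\dots,m,1,\dots$, at stage $t$ enlarging towards coordinate $i_t$, interleaving an enumeration of $\bbY_*$ so that every rotation point eventually enters some $\calW_t$. The constants fixed in Section~1 are arranged precisely so that between two consecutive services of a fixed coordinate $j$ (at most $m$ stages) the constant $L(j)$ drops by at most $\sim 21m\kappa<c_*$, each service resetting $L(i_t)$ back near $c_*$; hence the hypotheses the enlargement step needs ($L\geq\Theta+12\kappa$, and the $(L-6\kappa)$-convexity of Proposition~\ref{prop;included}) stay valid forever and the induction never stalls. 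Passing to the direct limit, $\Gamma_{Rot}=\varinjlim\Gamma_{\calW_t}$, and the presentations glue to the first presentation of the statement. For the second, let $\calS$ be the union over all stages of the transversals used (together with the base point): every generating rotation group is a conjugate $g\Gamma_Sg^{-1}$ with $S\in\calS$, so Tietze transformations delete the redundant generators and the conjugation relators and leave exactly the commutations between conjugates of the $\Gamma_S$ with disjoint active sets.

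I expect the main difficulty to be the second half of the enlargement step --- the simultaneous convexity bookkeeping. Unfolding in coordinate $i(R)$ is designed for that coordinate and a priori destroys convexity in every other coordinate, and the only tools relating the coordinate-$i(R)$ tree to the coordinate-$j$ projections are the composite projection axioms (symmetry and closeness in inaction, finite filling) together with the Behrstock/order machinery of Section~1; showing that a would-be violator in coordinate $j$ is still trapped --- even when its two endpoints lie in different petals --- forces one to combine the coordinate-$i(R)$ order on petals with betweenness transitivity in coordinate $j$, and it is here that the slack $21m\kappa$ inside $\Theta_P$ and the largeness of $\Theta_{Rot}$ relative to $c_*$ get consumed. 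Everything else --- the single-coordinate Bass--Serre/free-product conclusion, the Greendlinger property, the direct limit --- follows the \cite{DGO} template.
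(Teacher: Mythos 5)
Your overall architecture — windmills carrying a partially commutative presentation and a Greendlinger property, an unfolding step governed by a tree in the principal coordinate, an $m$-tuple of convexity constants degrading by $O(\kappa)$ off the principal coordinate, cycling through coordinates, passage to the limit — is the paper's. But two of your key claims are wrong as stated. First, the tree in coordinate $i(R)$ does \emph{not} have trivial edge stabilizers, and $\Gamma_{\calW'}$ is \emph{not} a free product of $\Gamma_\calW$ with the new rotation groups: if $S$ is a new rotation point and $X\in\calW$ with $X\notin\Act(S)$, then $\Gamma_S$ and $\Gamma_X$ commute by the third bullet of Definition \ref{def;CRF}, which is incompatible with a free product. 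One cannot first prove a free product and then "combine with the commutations by Tietze moves". The correct structure (Proposition \ref{prop;principal_tree}) is the Bass--Serre tree of a graph of groups whose new vertex groups are $\Gamma_R\times (G_W)_R$ and whose edge groups are the non-trivial groups $(G_W)_R=\langle\Gamma_X : X\in\calW\setminus\Act(R)\rangle$; it is precisely this amalgamation that produces the commutator relators, and injectivity of the map from the abstract Bass--Serre tree is proved by the monotonicity/Behrstock induction, not by a DGO-style triviality-of-intersection argument.

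Second, your enlargement step only ever adds neighbor-type osculators (the clean points of Proposition \ref{prop;firstguy}), so there is no mechanism that actually "resets $L(j)$ back near $c_*$" when coordinate $j$ is serviced: a new neighbor osculator does not sweep in the elements of $\bbY_j$ that already violate deep convexity between existing points of $\calW$. The paper's repair consists in adding, whenever the windmill fails to be $(\frac{c_*}{2}-20\kappa)$-convex, the \emph{entire} set of gap osculators in the principal coordinate (Lemmas \ref{lem;keepwalking} and \ref{lem;calRisconvex}, and the gap case of Proposition \ref{prop;unfolding_is_convex}); without it the constants degrade monotonically and the induction stalls, and the termination argument (Lemma \ref{lem;the_top_k}) has nothing to contradict. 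Two smaller repairs: the base case must be a \emph{maximal} family of mutually inactive elements, not a singleton, so that $\Act(R)\cap\calW\neq\emptyset$ for every $R$ — with a singleton, an element inactive on all of $\calW$ is never an osculator of either type, and your "nothing to do" branch leaves it out forever; and the induction must be transfinite (the paper runs over countable ordinals), since you may only adjoin osculators, not arbitrary elements of an enumeration, so an $\omega$-indexed schedule does not obviously exhaust $\bbY_*$.
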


In these presentations, we consider implicit the relations of the groups 
 $\Gamma_{Y}$ that appear in the generating sets. Moreover the expression  $\Gamma_{gY} = g\Gamma_Y g^{-1}$ refers to the following precise collection of formal relations: for all $\gamma$ in $\Gamma_Y$, for all $g \in \Gamma_{rot}  $, given the element $\gamma'\in \Gamma_{gY}$ equal to $ g\gamma g^{-1} $ (which exists by definition of composite rotating family), we add the relation $ (\gamma')^{-1}g\gamma g^{-1}=1 $. It is somewhat tautological, but necessary in a presentation over this generating set. The point of the second presentation is to avoid these tautological relations by reducing the generating set to a certain set of representatives of conjugacy classes of groups $\Gamma_Y$.

Unfortunately, it is not so easy to describe a-priori the subset $\calS$. It is constructed recursively in a number of steps, by taking at each step orbit representatives of a certain subset of $\bbY_*$ under the action of the group generated by the $\Gamma_Y$ that have been collected so far in the process. In principle, it probably can be enumerated explicitely, but at the cost of a certain complexification of the exposition.

The following result is, in our point of view, an incarnation of the
    Greendlinger lemma, from the small cancellation theories. If one
    considers a relation $\gamma$ of the quotient group, one can find
    in it a large part of a defining relation $\gamma_s$. Compare to
    \cite[\S 5.1.3]{DGO}. 

Let us consider $\Gamma_{rot} $ as in the previous theorem, and   $\gamma \in \Gamma_{rot}$. A principal coordinate for $\gamma$ is a coordinate $i\leq m$ for which, for all $X\in \bbY_{i}$,   $d_R(X, \gamma X) >\Theta_{Rot}    -2\Theta_P -\kappa$ (the constants are somewhat ad-hoc, chosen for the counting arguments to flow properly).         In that case, a shortening pair  $(R, \gamma_s)$   for  $\gamma$ in a principal coordinate $i$, at   $X\in  \bbY_{i}$,    is a pair consisting of a element $R$ of $\bbY_{i}$, and of an element $\gamma_s \in  \Gamma_{R}$ such that  
  $d_R(X,  \gamma_s \gamma X) \leq 2\Theta_P +3\kappa$.

\begin{theo}\label{theo;mainGreendlingerLemma}
 Consider $\bbY_*$ a composite projection system. 
      If $\{\Gamma_Y, Y\in \bbY_*\}$ is a composite rotating family
      for sufficiently large $\Theta_{Rot}$, let  
      $\Gamma_{Rot}$ be the group generated
      by $\bigcup_{Y\in \bbY_*} \Gamma_Y$

      Then  
      for all $\gamma\in \Gamma_{Rot}\setminus \{1\}$, there
      is   $i(\gamma)\leq m$ a principal coordinate for $\gamma$ and 
      $(R, \gamma_s)$ a shortening pair for $\gamma$ in that coordinate.

    \end{theo}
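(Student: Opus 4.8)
The plan is to run the same windmill induction used to establish Theorem~\ref{theo;main}, and to extract the shortening pair from the last unfolding that affects $\gamma$.

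Recall that the proof of Theorem~\ref{theo;main} builds an increasing exhausting sequence of \emph{windmills} $\calW_0\subset\calW_1\subset\cdots$, with $\bigcup_n\calW_n=\bbY_*$; writing $G_{\calW}=\langle\,\Gamma_X:X\in\calW\,\rangle$ one has $\Gamma_{Rot}=\bigcup_n G_{\calW_n}$. Each $\calW_n$ is obtained from $\calW_{n-1}$ by choosing, via Proposition~\ref{prop;firstguy}, a rotation point $R_n$ attached to $\calW_{n-1}$ with $\bbY_{L'}(\calW_{n-1},R_n)=\emptyset$ for the relevant threshold $L'$ in the coordinate $i_n:=i(R_n)$, and then \emph{unfolding}: in coordinate $i_n$ the set $\calW_n$ acquires the structure of a Bass--Serre tree $T_n$ whose distinguished vertex $v_0$ is stabilised by $G_{\calW_{n-1}}$, whose other vertices are stabilised by $G_{\calW_n}$-conjugates of $\Gamma_{R_n}$, and whose edge groups are trivial or generated by commuting rotation subgroups --- precisely the input that produces the partially commutative presentation. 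In the remaining $m-1$ coordinates the unfolding only degrades convexity in a controlled way (Propositions~\ref{prop;included} and~\ref{prop;osc_cvx}), and the schedule interleaves the coordinates so that the degradation of the $j$-th convexity constant is always repaired before the next unfolding carried out in coordinate $j$ --- the plate spinning; the point for us is that at the moment one unfolds in coordinate $i_n$ the convexity constant of $\calW_{n-1}$ in that coordinate still lies in the good window, between $\Theta+12\kappa$ and roughly $c_*$.

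First I would isolate the step at which $\gamma$ becomes essential: let $n$ be least with $\gamma\in G_{\calW_n}$ and set $i(\gamma):=i_n$. The base case $n=0$ already exhibits the mechanism: there $\gamma\in\Gamma_{R_0}\setminus\{1\}$, and with $R:=R_0$, $\gamma_s:=\gamma^{-1}$ one gets $d_{R_0}(X,\gamma_s\gamma X)=d_{R_0}(X,X)\le\theta\le 2\Theta_P+3\kappa$ by separation, while $d_{R_0}(X,\gamma X)\ge\Theta_{Rot}$ for every $X\in\bbY_{i_0}$ by the last clause of Definition~\ref{def;CRF} with $Z=X$ (since $d_{R_0}(X,X)\le\theta\le\Theta_P$). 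Thus the rotating-family axiom is responsible both for the largeness of the wrap --- hence for $i_0$ being principal, uniformly in $X$ --- and for the fact that an element of $\Gamma_R$ undoes it. For $n\ge 1$, since $\gamma\notin G_{\calW_{n-1}}$ it does not fix $v_0$ in $T_n$; let $u$ be the vertex adjacent to $v_0$ on the tree geodesic toward $\gamma v_0$ (or toward $\gamma^{-1}v_0$, whichever makes the signs work). Then $u$ is of rotation type, $u=R$ for some $R$ in the $G_{\calW_{n-1}}$-orbit of $R_n$, and in the graph-of-groups normal form $\gamma=\gamma_s\gamma'$ with $\gamma_s\in\Gamma_R\setminus\{1\}$ the leading rotation and $\gamma'$ no longer crossing the edge $v_0 u$. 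I would then declare $R$ and $\gamma_s$ to be the candidates for the shortening pair, at a conveniently chosen $X\in\bbY_{i_n}$.

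The heart of the argument --- and the step I expect to be the main obstacle --- is the dictionary between the combinatorics of $T_n$ and the projection distances $d^{\ang}_R$. Given $X\in\bbY_{i_n}$, one transfers it into the $\calW_{n-1}$-chunk up to an additive error at most $4\kappa$, using the $\Gamma_{X'}$-invariance of $\calW_{n-1}$ together with the orbit estimates (Lemmas~\ref{lem;transfert} and~\ref{lem;orbit_estimates}) and the fact that $\calW_{n-1}\cap\bbY_{i_n}$ is convex with the good constant; reading off the tree geodesic $[v_0,\gamma v_0]$ that $R$ lies strictly between $v_0$ and $\gamma v_0$, one converts this into largeness of the relevant angular distances by transitivity of betweenness (Lemma~\ref{lem;bet_trans}) and the induced-order statement (Proposition~\ref{prop;induced_order}); and finally one invokes the last clause of Definition~\ref{def;CRF} exactly as in the base case to obtain $d_R(X,\gamma X)>\Theta_{Rot}$ up to the accumulated errors, hence $>\Theta_{Rot}-2\Theta_P-\kappa$, while $\gamma_s$ having peeled off the leading wrap, $\gamma_s\gamma$ no longer turns around $R$ and so $d_R(X,\gamma_s\gamma X)\le 2\Theta_P+3\kappa$. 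The delicate part is the constant accounting: the cumulative errors --- two transfers at $4\kappa$, the $\kappa$'s from coarse triangle inequality, the $2\kappa$ from betweenness, the $\Theta$ in the threshold of Proposition~\ref{prop;induced_order} --- must be absorbed by the gap between $\Theta_{Rot}-2\Theta_P-\kappa$ and $2\Theta_P+3\kappa$, which is exactly why $\Theta_{Rot}$ is chosen so much larger than $\Theta_P$, $\kappa$ and $\Theta$; one must also confirm that no earlier unfolding in coordinate $i_n$ has driven the convexity constant of $\calW_{n-1}$ out of range, which is again guaranteed by the schedule. Once this dictionary is in place the conclusion for $\gamma$ is immediate, and since every $\gamma\ne 1$ already lies in some $G_{\calW_n}$ the direct-limit passage is harmless.
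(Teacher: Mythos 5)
Your approach is essentially the paper's, but you are re-deriving a proposition that the paper already isolated. In the paper the Greendlinger property is made a defining clause of a composite windmill (Definition~\ref{def;CW}); Proposition~\ref{prop;principal_tree} provides exactly the two tree-to-projection estimates you describe ($d_{\psi(v)}(X_1,X_2)\ge\Theta_{Rot}-2\Theta_P-\kappa$ for a black vertex between two white ones, and $\le 2\Theta_P+3\kappa$ when the paths start along the same edge); Proposition~\ref{prop;unfold_main} records that this preserves the Greendlinger clause under unfolding; and the actual proof of Theorem~\ref{theo;mainGreendlingerLemma} is the single line that $\calW(k_{top})$ is a composite windmill covering all of $\bbY_*$ (Lemmas~\ref{lem;still_a_CWM}, \ref{lem;still_a_CWM_again}, \ref{lem;the_top_k}), after which the Greendlinger clause gives the statement verbatim. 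So the ``dictionary'' you announce as the heart of the argument is precisely Proposition~\ref{prop;principal_tree}, and you would do better to cite it rather than re-prove it.

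Two small corrections to your framing. First, the windmill process is indexed by countable ordinals, not by $\bbN$; your ``least $n$ with $\gamma\in G_{\calW_n}$'' should be ``least ordinal $k$,'' and the direct-limit passage you mention is Lemma~\ref{lem;still_a_CWM_again}. Second, and more substantively: the theorem's notion of a principal coordinate quantifies over \emph{all} $X\in\bbY_{i(\gamma)}$, whereas the tree $T_n$ built at the stage where $\gamma$ first appears only controls $X$ lying in $\calW(k)_{i(\gamma)}$ at that stage (the transfer and orbit estimates use that $\Gamma_X$ preserves the windmill, which requires $X$ to already be inside it). To get all of $\bbY_{i(\gamma)}$ you should not stop at the first stage where $\gamma$ enters; you should instead invoke the Greendlinger clause of the \emph{terminal} windmill $\calW(k_{top})$, whose coordinate-$i(\gamma)$ slice equals $\bbY_{i(\gamma)}$ --- exactly what the paper does. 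Also, your base case is oversimplified: $G_{W(0)}$ is a direct product of up to $m$ rotation groups $\Gamma_X$ around pairwise inactive centers, not a single $\Gamma_{R_0}$, and the Greendlinger property there needs the separate argument that the factor in coordinate $i(\gamma)$ can be peeled off because the other factors act trivially on $\bbY_{i(\gamma)}$.
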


    A major tool for analysing rotating families was the concept of
    windmills.  We are going to use composite windmills.

    Let us fix $\calL$ the $m$-tuple $$\calL =(  c_* +20(m-1)\kappa ,
    \,   c_*
    + 20(m-2)\kappa, \,  \dots, \,  c_* +20\kappa, \, c_*
    ).$$ Let $\sigma$ be the cyclic shift  on $\bbZ/m\bbZ$: $\sigma(i)
    = (i-1)$, and define $\calL_{j} = \sigma^{j-1} (\calL)$ obtained by
    shifting the coordinates of the $m$-tuple. 

    Thus  
    $\calL_i$ reaches its maximum $ c_*+20(m-1)\kappa$ on the coordinate
    $i$,  minimal value $c_*$ at $i-1$. Note that the maximum of
    $\calL$ is less than $\Theta_P -\kappa$.

    \begin{defi}(Composite windmills)\label{def;CW}

    A composite windmill is a collection $(\calW_1, \dots, \calW_m,
    G_W, j_0)$ in which 
    \begin{itemize}
      \item  $G_W$ is the subgroup of $G$ generated by  a set of
        subgroups $\{\G_Y, Y\in
        \bigcup_{i\in I_*} \calW_i\}$ for $I_*$ either $\{1, \dots,
        m\}$ or   $\{1, \dots,
        m\} \setminus \{j_0\}$,
      \item $\calW_i$ is a subset of $\bbY_i$ for all $i$, invariant
        under $G_W$,
      \item $j_0$ is called the principal coordinate, and $1\leq j_0\leq m$,  
      \item $ \bigcup_i \calW_i$ is   $\calL_{j_0}$-convex.
      \item The group $G_W$ has a partially commutative presentation,
        that is a presentation of the form $$ G\simeq \langle \calS \,
        |\, \calR\rangle$$ where $\calS$ is the union over a subset
        $\calW_*$ of
        $\calW$ of generating sets for $\Gamma_X, X\in\calW_*$, and
        $\calR$ consists of words over the alphabet
        $\calS \cup \calS^{-1} $ of the form $[s, ws'w^{-1}]$ for $w$ a word over
        $\calS \cup \calS^{-1}$. Moreover,  if $X,X' \in \calW_*$ and  $s\in
        \Gamma_X, s'\in \Gamma_{X'} $, the word  $[s, ws'w^{-1}] $ is in $\calR$ if and only if   $wX' \notin \Act(X)$. 
        \item (Greendlinger property) for each $\gamma \in
          G_W$ there is $i(\gamma)\leq m$, and for all $X\in
          \calW_{i(\gamma)}$, either $\gamma \in \Gamma_X$, or there is an $R\in
          \calW_{i(\gamma)}$ such that 
           $d_R(X, \gamma X) >\Theta_{Rot}
          -2\Theta_P -\kappa$. Moreover, there is a $\gamma_s\in \Gamma_{R}$ such
          that $d_R(X,  \gamma_s \gamma X) \leq 2\Theta_P +3\kappa$ (the pair
          $(R, \gamma_s)$ is called a shortening pair for
          $\gamma$ at $X$).

    \end{itemize}

    We say that the composite windmill has full group if $G_W$ is the subgroup of $G$ generated by $\{\G_Y, Y\in
        \bigcup_{i=1}^m \calW_i\}$. 
    \end{defi}

   If we do not mention it,  our windmills will be full. Only in
   specific circumstances do
   we need non-full windmills.  Indeed, we will use the case of a non-full group only at most one  time by
    coordinate, when
    initiating the process in each coordinate. 

    \begin{prop}
     In a composite windmill $\mathcal{W}$,  for all $i$ such that $\mathcal{W}_i \neq \emptyset$,  $\mathcal{W}_i$  is connected in   $\calP_K(\bbY_i)$. 
    \end{prop}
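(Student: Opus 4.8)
The goal is to show that if $\calW = (\calW_1, \dots, \calW_m, G_W, j_0)$ is a composite windmill, then each nonempty $\calW_i$ is connected in the projection complex $\calP_K(\bbY_i)$. The strategy is to leverage the $\calL_{j_0}$-convexity of $\bigcup_i \calW_i$ together with the $G_W$-invariance of each $\calW_i$, and to compare the "convexity in the composite sense" with the notion of connectedness in $\calP_K(\bbY_i)$, which by the Bestvina--Bromberg--Fujiwara machinery (the Order property recalled in Section~1.1, together with the definition of $\calP_K$) is controlled by the functions $d_Y$ on $\bbY_i$. Recall $X, Z \in \bbY_i$ span an edge in $\calP_K(\bbY_i)$ iff $\bbY_K(X,Z) = \emptyset$, and a geodesic from $X$ to $Z$ passes (in order) through exactly the finitely many $Y$ in $\bbY_\Theta(X,Z)$ with large $d_Y(X,Z)$.

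\textbf{Step 1: reduce to showing $\calW_i$ contains geodesics.} Fix $i$ with $\calW_i \neq \emptyset$ and take $X, Z \in \calW_i$. It suffices to show that every vertex $Y$ on a $\calP_K(\bbY_i)$-geodesic from $X$ to $Z$ lies in $\calW_i$; then connectedness follows by induction on the (finite) distance $d_{\calP_K}(X,Z)$, peeling off the first edge. By the Order property of \cite{BBF}, the interior vertices of such a geodesic are exactly the elements of $\bbY^i_{\Theta}(X,Z)$ (listed in the order $\dot{<}$), or at worst are controlled by $\bbY^i_{K-\kappa}(X,Z) \subset \bbY^i_{\Theta}(X,Z)$ since $K > \Theta$. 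So the claim reduces to: every $Y \in \bbY^i_{\Theta}(X,Z)$ with $d^\ang_Y(X,Z) \geq \Theta$ belongs to $\calW_i$ — in fact any $Y$ with $d^\ang_Y(X,Z)$ at least, say, $\max(\calL_{j_0})$ will do, and $\max(\calL_{j_0}) = c_* + 20(m-1)\kappa$, which may exceed $\Theta$, so one must be slightly careful. The honest reduction is: a geodesic in $\calP_K(\bbY_i)$ can be taken whose interior vertices all satisfy $d^\ang_Y(X,Z) \geq K - \kappa$, and then it is enough that $\bbY^i_{L}(X,Z) \subset \calW_i$ for $L = \max(\calL_{j_0})$, since $K - \kappa > \Theta_P - \kappa > \max(\calL_{j_0})$ by the constant hierarchy (and one then bridges the gap between "on a $\calP_K$-geodesic" and "has large $d_Y$" using the Order property one more time, possibly inserting one intermediate element of $\bbY^i_{K-\kappa}$ at a time).

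\textbf{Step 2: apply convexity.} By hypothesis $\bigcup_i \calW_i$ is $\calL_{j_0}$-convex, meaning for all $X, Z \in \calW$ of equal coordinate $i(X) = i(Z) = i$ and all $j$, one has $\bbY^j_{\calL_{j_0}(j)}(X,Z) \subset \calW$. Taking $j = i$, and noting $\calL_{j_0}(i) \leq \max(\calL_{j_0}) = L$, we get $\bbY^i_L(X,Z) \subset \bbY^i_{\calL_{j_0}(i)}(X,Z) \subset \calW$, hence $\subset \calW_i$ (since these elements have coordinate $i$). Combined with Step~1, this shows every interior vertex of a $\calP_K(\bbY_i)$-geodesic from $X$ to $Z$ lies in $\calW_i$, so $X$ and $Z$ are connected in $\calP_K(\bbY_i)$ through a path entirely inside $\calW_i$. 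As $X, Z$ were arbitrary in $\calW_i$, the subgraph induced on $\calW_i$ is connected.

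\textbf{Main obstacle.} The delicate point is Step~1: bridging between the combinatorial notion "$Y$ lies on a $\calP_K(\bbY_i)$-geodesic between $X$ and $Z$" and the metric notion "$d^\ang_Y(X,Z) \geq L$" controlled by the convexity hypothesis. The Order property of \cite{BBF} gives that $\bbY_\Theta(X,Z) \cup \{X,Z\}$ is totally ordered and that consecutive large-projection elements are "close" in the appropriate sense, which is precisely what lets one produce a $\calP_K$-geodesic whose vertices are among $\bbY^i_{K-\kappa}(X,Z)$; one then inducts, at each stage replacing $(X,Z)$ by $(X,Y)$ and $(Y,Z)$ for the $\dot{<}$-extremal such $Y$, using the Order inequalities $d_{Y}(X,Z) - \kappa \leq d_Y(Y_0,Y_2) \leq d_Y(X,Z)$ to keep the projections large enough that the convexity bound $L$ still applies. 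The constant bookkeeping — checking that $K - \kappa$ stays above $\max(\calL_{j_0})$ through the finitely many inductive steps — is routine given the stated hierarchy $\Theta_{Rot} \gg \Theta_P \gg c_* \gg \kappa \gg \theta$ and $K > \Theta_P$.
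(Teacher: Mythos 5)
Your overall strategy coincides with the paper's: produce a path in $\calP_K(\bbY_i)$ from $X$ to $Z$ all of whose vertices $Y$ satisfy $d_Y(X,Z)\geq K$, then observe $K>\max(\calL_{j_0})$ so that $\calL_{j_0}$-convexity forces those vertices into $\calW_i$. Where the paper simply invokes the first claim in the proof of \cite[Thm.\ 3.7]{BBF}, which asserts exactly such a ``standard path'' exists, you attempt to rebuild that fact from the Order property, and this is where your write-up is both heavier than necessary and imprecise. In particular, the claim that the interior vertices of a $\calP_K(\bbY_i)$-geodesic are ``exactly the elements of $\bbY^i_{\Theta}(X,Z)$'' is false in general ($\bbY^i_\Theta(X,Z)$ can be strictly larger than any geodesic), and there is no need to work with geodesics at all: connectedness of $\calW_i$ only requires \emph{some} path inside it, which is what the BBF standard-path construction delivers. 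Your ``Main obstacle'' paragraph concedes that this existence step is not actually proven but only sketched; citing \cite[Thm.\ 3.7]{BBF} closes that gap cleanly and reduces the whole proof to the two-line argument in the paper. The constant check you do ($K-\kappa>\max(\calL_{j_0})$) is fine but also more than needed, since the cited path has $d_Y(X,Z)\geq K$ directly.
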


\begin{proof}    Consider $X, X'$ two points in it, by \cite[Thm. 3.7]{BBF} (more precisely the first claim in its proof), there exists a  path between them, $X_1, \dots X_n= X'$ such that for each $j$,  $X_j \in \bbY^i_K(X,X')$.  Since  $K>  \max (\calL) $, it follows that each $X_j$ is in  $\mathcal{W}_i$.
\end{proof}

    We say that a windmill $\calW'$ (with its  representative set $\calW'_*$
     used for the presentation of the definition)   is \emph{constructed over} $\calW$ if
    $\calW\subset \calW'$ and if the set of representatives $\calW'_*$
    contains the set of representatives $\calW_*$.  Note that it is transitive: if $\calW''$  is constructed over  $\calW'$, and  $\calW'$ is constructed over $\calW$, then  $\calW''$  is constructed over  $\calW$.

    \subsection{Osculations of two kinds}

       \begin{itemize}
         \item An osculator of type {\it gap} of a composite windmill $(\calW_1, \dots, \calW_m,
       G_W, j_0)$        
       is an element $R$ of $\bbY_{j_0} \setminus
       \calW_{j_0}$ such that there
       exists $i\leq m$,  $X_i, Z_i \in \calW_i$, that are in $\Act(R)$
       and such that $d_R^\ang(X_i,Z_i) > \frac{c_*}{2} -20\kappa$. 

         \item         An osculator of type {\it neighbor} of a
           composite windmill $(\calW_1, \dots, \calW_m,
       G_W, j_0)$         
       is an element
       $R$ of $\bbY_{j_0} \setminus \calW_{j_0}$ such that $\bbY_{\frac{c_*}{2}}(\calW, R) = \emptyset$.

       \end{itemize}

      \begin{lemma}  \label{lem;approx_gap} Consider a composite windmill $\calW = (\calW_{1}, \dots, \calW_m, 
  G_W, j_0)$, assume that $\calW_{j_0} \neq \emptyset$, and let $R \in \bbY_{j_0}$ be an osculator of type gap. 
  
Let $Y\in \bbY_i$ in $\Act(R)$.   Then there exists $X\in \calW_{j_0}$ such that
$d_Y^\ang(X,R) \leq \kappa$.
       \end{lemma}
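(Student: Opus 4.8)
The plan is to reduce the arbitrary $Y\in\Act(R)$ to a two‑point Behrstock computation inside $\calW_{j_0}$. First I would unpack the hypothesis: since $R$ is an osculator of type gap there are a coordinate $i$ and points $X_i,Z_i\in\calW_i\cap\Act(R)$ with $d_R^\ang(X_i,Z_i)>\tfrac{c_*}{2}-20\kappa$. If $i=j_0$ these already lie in $\calW_{j_0}$. If $i\neq j_0$, then $\Gamma_{X_i}$ and $\Gamma_{Z_i}$ belong to $G_W$ — this holds even for a non‑full windmill, since $j_0$ is the only coordinate that can be missing from $I_*$ — hence they preserve $\calW_{j_0}$; as $\calW_{j_0}\neq\emptyset$ I can transfer $X_i$ and $Z_i$ into coordinate $j_0$ by applying Lemma~\ref{lem;transfert} twice (with the rôle of its ``$Y$'' played by $R$) or by the orbit estimate of Lemma~\ref{lem;orbit_estimates}, obtaining $X_{j_0},Z_{j_0}\in\calW_{j_0}$ with $d_R^\ang(X_{j_0},Z_{j_0})\geq\tfrac{c_*}{2}-24\kappa$; call this bound $D$. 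Since $c_*>1000(\Theta+\kappa)$ we have $D>3\kappa$, and since $X_{j_0},Z_{j_0}\in\bbY_{j_0}\subset\Act(R)$ every projection from $R$ among $\{X_{j_0},Z_{j_0},Y\}$ that occurs below is defined.

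Next I would run the Behrstock argument. Fix $Y\in\Act(R)$; I may assume $Y\notin\{R,X_{j_0},Z_{j_0}\}$, the remaining cases being immediate from the separation axiom. I claim that some $T\in\{X_{j_0},Z_{j_0}\}$ satisfies $T\in\Act(Y)$ and $d_Y^\ang(T,R)\leq\kappa$, which proves the lemma with $X=T$. Suppose not. Then each $T\in\{X_{j_0},Z_{j_0}\}$ is ``bad'': either $T\notin\Act(Y)$, in which case closeness in inaction applied with the witness $R\in\Act(T)\cap\Act(Y)$ gives $d_R^\ang(T,Y)\leq\theta\leq\kappa$; or $T\in\Act(Y)$ and $d_Y^\ang(T,R)>\kappa$, in which case the Behrstock inequality $\min\{d_Y^\ang(T,R),\,d_R^\ang(T,Y)\}\leq\kappa$ — in its modified form when $Y\in\bbY_{j_0}$, in its raw form with constant $\theta$ when $Y$ lies in another coordinate — forces $d_R^\ang(T,Y)\leq\kappa$. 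Applying this to both points and invoking the (coarse) triangle inequality, $D\leq d_R^\ang(X_{j_0},Z_{j_0})\leq d_R^\ang(X_{j_0},Y)+d_R^\ang(Y,Z_{j_0})+\kappa\leq 3\kappa$, contradicting $D>3\kappa$.

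The geometric content is simply that, since $X_{j_0}$ and $Z_{j_0}$ are seen far apart from $R$, anything active for $R$ must see $R$ aligned with one of the two; everything delicate is bookkeeping. I expect the only real obstacle to be technical: checking the active‑set memberships that make each $d^\ang$ meaningful, ensuring the transfer of the first step genuinely lands inside $\calW_{j_0}$ (this is exactly where $\calW_{j_0}\neq\emptyset$ and the near‑fullness of $G_W$ enter), and consistently invoking the modified versus the raw form of the Behrstock and triangle inequalities according to whether $Y$ shares the coordinate $j_0$. None of this requires ideas beyond the composite projection axioms and the orbit‑transfer lemmas already established.
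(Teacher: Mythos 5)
Your proof is correct and follows essentially the same route as the paper: produce the gap witnesses, transfer them into $\calW_{j_0}$ by the orbit estimates of Lemma~\ref{lem;orbit_estimates}, and then combine the (coarse) triangle inequality at $R$ with the Behrstock inequality to align $R$ with one of the two transferred points as seen from $Y$. Your contrapositive phrasing, with the explicit treatment of the case $T\notin\Act(Y)$ via closeness in inaction, is only a minor (and harmless) elaboration of the paper's direct argument.
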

\begin{proof}
      If $R$ is an osculator of type gap, there are
      $X', Z' \in \calW_i$, for some $i$, such that $d^\ang_{R}(X',Z')
      >c_*/2 -20\kappa$.

      Let  $X_0\in \calW_{j_0}$, and
      consider its orbit under the groups $\Gamma_{X'}$, and
      $\Gamma_{Z'}$,  which
      preserves $\calW_{j_0}$. 
      We may use Lemma \ref{lem;orbit_estimates} to  find
      $X'^{(j_0)}, Z'^{(j_0)}$ in these orbits, hence in $\calW_{j_0}$, such that $d^\ang_{R}
      (X'^{(j_0)}, Z'^{(j_0)}) >c_*/2-24\kappa$. 

      By the coarse
      triangle inequality, for at least one point among $X'^{(j_0)},
      Z'^{(j_0)}$, say  $X'^{(j_0)}$, we have $d^\ang_{R} (Y, X'^{(j_0)})
      >c_*/4 - 13\kappa$. Behrstock inequality gives   $d^\ang_{Y} (R,
      X'^{(j_0)}) \leq \kappa$.

\end{proof}

       \begin{lemma}\label{lem;i=1}
         Let $\calW$ be a composite windmill, and $R_1, R_3$ be two osculators
         of $\calW$.         Assume $\calW_{j_0} \neq \emptyset$, and 
         let $X_2 \in \calW_{j_0}$.

         If $R_3$ is of type neighbor and $\calW$ is $(\frac{c_*}{2}
         -20\kappa)$-convex, then $d_{R_1}(X_2, R_3) \leq  c_* $. 

         If $R_3$ is of type gap, then
         $d_{R_1}(X_2, R_3) \leq  \Theta_P  $. 
       \end{lemma}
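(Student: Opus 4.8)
The plan is to treat the two cases separately, exploiting throughout that $R_1$, $X_2$, $R_3$ all lie in $\bbY_{j_0}$ (so $d_{R_1}(X_2,R_3)$ is an honest BBF-distance, equal to $d^\ang_{R_1}(X_2,R_3)$), that $R_1,R_3\in\bbY_{j_0}\setminus\calW_{j_0}$, and that $\calW$ is convex in the coordinate $j_0$: always with constant $\max\calL=c_*+20(m-1)\kappa$ coming from the $\calL_{j_0}$-convexity of a composite windmill, and with the sharper constant $c_*/2-20\kappa$ under the extra hypothesis of the first assertion. In each case I would produce a point $X\in\calW_{j_0}$ that, seen from $R_1$, is $O(\kappa)$-aligned with $R_3$, so that the coarse triangle inequality reduces the desired bound on $d_{R_1}(X_2,R_3)$ to a bound on $d_{R_1}(X_2,X)$, which then follows from $j_0$-convexity together with $R_1\notin\calW_{j_0}$.

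For the type-gap assertion I would apply Lemma~\ref{lem;approx_gap} to the gap osculator $R=R_3$ with $Y=R_1\in\bbY_{j_0}\subset\Act(R_3)$, obtaining $X\in\calW_{j_0}$ with $d_{R_1}(X,R_3)\le\kappa$. Since $X,X_2\in\calW_{j_0}$ and $\bigcup_i\calW_i$ is $\calL_{j_0}$-convex with $\calL_{j_0}(j_0)=\max\calL$, the point $R_1\notin\calW_{j_0}$ cannot lie in $\bbY^{j_0}_{\max\calL}(X_2,X)$, whence $d_{R_1}(X_2,X)<c_*+20(m-1)\kappa$; the coarse triangle inequality then gives $d_{R_1}(X_2,R_3)<c_*+(20m-18)\kappa\le\Theta_P=c_*+21m\kappa$.

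For the type-neighbor assertion I would first extract a witness from $\bbY_{c_*/2}(\calW,R_3)=\emptyset$: since $R_1\in\Act(R_3)$, $R_1\notin\calW$, and $\calW\cap\Act(R_3)\cap\Act(R_1)\supseteq\calW_{j_0}\ne\emptyset$, the only clause defining $\bbY_{c_*/2}(\calW,R_3)$ that $R_1$ can fail is the last one, so there is $X\in\calW\cap\Act(R_3)\cap\Act(R_1)$ with $d^\ang_{R_1}(X,R_3)<c_*/2$. If $X\in\bbY_{j_0}$ I put $\tilde X:=X$; otherwise I transfer $X$ into $\calW_{j_0}$: there $\Gamma_X$ is a generator of $G_W$, hence preserves $\calW_{j_0}$, and it has proper isotropy, so Lemma~\ref{lem;transfert} at $Y=R_1$ applied to a fixed base point of $\calW_{j_0}$ produces $\tilde X\in\calW_{j_0}$ with $d^\ang_{R_1}(\tilde X,X)\le\kappa$. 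Coarse equality and the $d^\pi$ triangle inequality then bound $d_{R_1}(\tilde X,R_3)<c_*/2+\kappa$; and since $X_2,\tilde X\in\calW_{j_0}$ while $R_1\notin\calW_{j_0}$, the $(c_*/2-20\kappa)$-convexity forces $d_{R_1}(X_2,\tilde X)<c_*/2-20\kappa$. A final coarse triangle inequality yields $d_{R_1}(X_2,R_3)<(c_*/2-20\kappa)+(c_*/2+\kappa)+\kappa=c_*-18\kappa\le c_*$.

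The step I expect to need the most care is the type-neighbor case: correctly pinning down the witness $X$ from the failure of the last clause of the definition of $\bbY_{c_*/2}(\calW,R_3)$ (verifying the other three clauses genuinely hold for $R_1$), and transferring it into $\calW_{j_0}$ without leaving $\calW$ — this last point being exactly where one uses that $\Gamma_X\le G_W$ when $X$ is not already in coordinate $j_0$. Everything else is routine $\kappa$-bookkeeping between $d^\pi$, $d$ and $d^\ang$, unavoidable since $d^\ang_{R_1}$ collapses to $d^\pi_{R_1}$ as soon as an argument leaves $\bbY_{j_0}$, and it is comfortably absorbed by the slack in $c_*-18\kappa<c_*$ and $c_*+(20m-18)\kappa<\Theta_P$.
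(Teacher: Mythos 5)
Your proof is correct and essentially follows the paper. The gap case invokes Lemma~\ref{lem;approx_gap} (whose content the paper re-derives inline) and closes with $\calL_{j_0}$-convexity, exactly as the paper does; the neighbor case re-derives by hand the special case of Proposition~\ref{prop;osc_cvx} that the paper cites wholesale — extracting a witness $X$ from the emptiness of $\bbY_{c_*/2}(\calW,R_3)$, transferring it into $\calW_{j_0}$ via Lemma~\ref{lem;transfert}, and finishing with $(\frac{c_*}{2}-20\kappa)$-convexity — which is precisely the mechanism underlying that proposition, so the two arguments agree in substance and the constants check out with room to spare.
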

       
       \begin{proof}
       If $R_3$ is an osculator of neighbor type, then the result
       follows from Proposition \ref{prop;osc_cvx}.

       If now $R_3$ is an osculator of type gap, the proof is slightly
       more involved. There is $i$, and there are $X,Z \in \calW_i$
       such that $ d_{R_3}^\ang (X,Z) > c_*/2-20\kappa$.  

       Since $\calW_{j_0}$ is non-empty, and invariant for $\G_{X}$ and
       $\G_Z$, we can apply Lemma \ref{lem;orbit_estimates}  and find
       $X^{(j_0)} , Z^{(j_0)} \in \calW_{j_0}$ such that $d_{R_3} (
       X^{(j_0)} , Z^{(j_0)} ) \geq  d_{R_3}^\ang (X,Z)  -4\kappa$
       which is $\geq  c_*/2-24\kappa$.  By coarse triangular
       inequality, at least one of the quantities $d_{R_3} ( R_1,
       X^{(j_0)} )$ and $ d_{R_3} (R_2, Z^{(j_0)})$ is greater than
       $c_*/4 -13\kappa$. Say it is $d_{R_3} ( R_1,
       X^{(j_0)} )$. Behrstock inequality then gives that $d_{R_1} ( R_3,
       X^{(j_0)} ) \leq \kappa$, and again coarse triangular
       inequality gives $d_{R_1} (
       X^{(j_0)} , X_2 ) \geq  d_{R_1}(X_2, R_3)      - \kappa$. Since
       the first is bounded by the maximal  convexity constant of
       $\calW$, the result follows.

       \end{proof}

       \subsection{The unfolding in the different coordinates}

       Given a composite windmill $\calW$, we will define its unfolding.

       Observe first the following, which justifies the next definition of admissible set of osculators.

       \begin{lemma} \label{lem;keepwalking}
         If $\calW$ is a composite windmill, it has some gap osculator if and only if it is not  $(\frac{c_*}{2}-20\kappa)$-convex. 

         Assume that for all $R\in \bbY_*$,  $\Act(R) \cap \calW \neq \emptyset$.   If $\calW$  is  $(\frac{c_*}{2}-20\kappa)$-convex, and yet does not contain $\bbY_*$, then there exists a neighbor osculator.
       \end{lemma}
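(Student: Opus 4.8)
\emph{First equivalence.} One direction is immediate from the definitions: if $R$ is a gap osculator then $R\in\bbY_{j_0}\setminus\calW_{j_0}$ (so $R\notin\calW$), and there are $X_i,Z_i\in\calW_i$, both in $\Act(R)$, with $d^\ang_R(X_i,Z_i)>c_*/2-20\kappa$; hence $R\in\bbY^{j_0}_{c_*/2-20\kappa}(X_i,Z_i)\setminus\calW$, so $\calW$ is not $(c_*/2-20\kappa)$-convex. For the converse I would argue by contraposition. Suppose $\calW$ is not $(c_*/2-20\kappa)$-convex, and pick $X,Z\in\calW_i$, a coordinate $j$, and $Y\in\bbY^j_{c_*/2-20\kappa}(X,Z)$ with $Y\notin\calW$. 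If $j=j_0$ then $Y$ is itself a gap osculator and we are done. If $j\ne j_0$, the point is to transfer the data into the principal coordinate (one may assume $\calW_{j_0}\ne\emptyset$, the case $\calW_{j_0}=\emptyset$ being handled the same way since then every element of $\bbY_{j_0}$ qualifies as an osculator). As $X,Z\in\calW$, the set $\calW_{j_0}$ is invariant under the infinite properly isotropic rotation groups $\Gamma_X,\Gamma_Z$, so by Lemma \ref{lem;orbit_estimates} we may replace $X,Z$ by $X^{(j_0)},Z^{(j_0)}\in\calW_{j_0}$ with $d^\ang_Y(X^{(j_0)},Z^{(j_0)})\geq c_*/2-24\kappa$. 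Now, exactly as in the proof of Lemma \ref{lem;approx_gap}, the coarse triangle inequality puts $Y$ at $d^\ang$-distance $>c_*/4-13\kappa$ from one of $X^{(j_0)},Z^{(j_0)}$, and Behrstock then produces an element $R\in\bbY_{j_0}$ with a large $d^\ang_R$ between two points of $\calW_{j_0}$. This $R$ is a gap osculator unless it already lies in $\calW_{j_0}$, and in the latter case $\calL_{j_0}$-convexity of $\calW$ (its thresholds lying between $c_*$ and $\Theta_P-\kappa$) forces $Y\in\calW$ — contradiction. The hierarchy $\Theta_{Rot}\gg\Theta_P\gg c_*\gg\Theta,\kappa$ is what absorbs the $O(\kappa)$ errors of these transfers.

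\emph{Existence of a neighbor osculator.} Assume $\calW$ is $(c_*/2-20\kappa)$-convex, that $\Act(R)\cap\calW\ne\emptyset$ for every $R$, and that $\calW\ne\bbY_*$. First I would check $\bbY_{j_0}\setminus\calW_{j_0}\ne\emptyset$: if $\calW_{j_0}$ were all of $\bbY_{j_0}$, then using $(c_*/2-20\kappa)$-convexity (every $Y$ gets squeezed between two far-apart points of $\calW$ seen from it, noting that $Y\in\bbY_i$ is active for every element of $\bbY_i$) one would get $\calW=\bbY_*$, contrary to hypothesis. Fix $R_0\in\bbY_{j_0}\setminus\calW_{j_0}$. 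Since $\calW$ is invariant under infinite properly isotropic rotation groups and $\Act(R_0)\cap\calW\ne\emptyset$, Proposition \ref{prop;firstguy} applies with $L:=c_*/2+2m\kappa$ (which is $>(2m+12)\kappa+\Theta$, as $c_*$ is taken $\gg m\kappa$): it yields $Z\in\bbY_L(\calW,R_0)$ with $\bbY_{c_*/2}(\calW,Z)=\emptyset$, and $Z\notin\calW$ so $Z\in\bbY_{i(Z)}\setminus\calW_{i(Z)}$. The remaining point is to arrange $i(Z)=j_0$; for this I would run the descent in the proof of Proposition \ref{prop;firstguy} keeping a ledger of linked coordinates, eliminating a non-principal linked coordinate at each step when one is available. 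Because $\calW$ is already $(c_*/2-20\kappa)$-convex, hence by the first part has no gap osculator, Proposition \ref{prop;included} prevents the principal coordinate from dropping out of the ledger prematurely, so the final element $Z$ lies in $\bbY_{j_0}$ and is the desired neighbor osculator.

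\emph{Main obstacle.} In both halves the real work is this coordinate bookkeeping: a neighbor osculator and a gap osculator are, by definition, elements of the principal coordinate $\bbY_{j_0}$, whereas the objects one naturally produces — a witness of non-convexity in the first half, the output of Proposition \ref{prop;firstguy} in the second — live a priori in an arbitrary coordinate. Relocating them into $\bbY_{j_0}$ without spoiling the thresholds $c_*/2-20\kappa$ and $c_*/2$ is exactly where the rotation-invariance of $\calW$, proper isotropy, and the orbit estimates of Lemma \ref{lem;orbit_estimates} have to be handled with care.
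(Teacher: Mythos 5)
Your proposal and the paper's proof diverge in an instructive way: the paper treats both assertions as essentially immediate (the first is declared ``direct from the definitions,'' the second is a one-line application of Proposition \ref{prop;firstguy}), whereas you try to prove the strictly literal statement, with the osculator landing in coordinate $j_0$. The literal reading is actually \emph{false} in general -- if $\calW_{j_0}=\bbY_{j_0}$ but non-convexity is witnessed in another coordinate, there is no element of $\bbY_{j_0}\setminus\calW_{j_0}$ at all -- and the subsequent remark in the paper (``Note that it can be the empty set if the gap osculators are not in the coordinate $j_0$'') confirms that ``gap osculator'' is being used loosely in this lemma, with the coordinate mismatch absorbed later by cycling $j_0$ via trivial unfoldings (Lemma \ref{lem;emptyR}). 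Your reading was the stricter one, but then the extra steps you attempt do not hold up.

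Concretely: in your converse of the first assertion, after transferring $X,Z$ to $X^{(j_0)},Z^{(j_0)}\in\calW_{j_0}$ you still only have the witness $Y\in\bbY_j$; the coarse triangle and Behrstock inequalities constrain the distances among $Y, X^{(j_0)}, Z^{(j_0)}$, but they do not \emph{produce} an element $R\in\bbY_{j_0}$. Lemma \ref{lem;approx_gap}, which you invoke, goes the wrong way -- it \emph{starts} from a gap osculator $R$ and approximates it by something in $\calW_{j_0}$; it never manufactures an $R$. Your parenthetical claim that when $\calW_{j_0}=\emptyset$ ``every element of $\bbY_{j_0}$ qualifies as an osculator'' is also incorrect: a gap osculator must still satisfy $d^\ang_R(X_i,Z_i)>\tfrac{c_*}{2}-20\kappa$ for some $X_i,Z_i\in\calW_i\cap\Act(R)$, which an arbitrary $R\in\bbY_{j_0}$ has no reason to do. For the second assertion, the paper's proof is exactly your first step -- take $X\notin\calW$, apply Proposition \ref{prop;firstguy} with $L=\tfrac{c_*}{2}+2m\kappa$, and stop. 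Your proposed ``ledger'' refinement of the descent is not justified: Proposition \ref{prop;included} controls $\bbY_L(\calW,\cdot)$ when you move inside it, but it gives you no control over which coordinate the surviving element $Z$ lies in, nor any guarantee that $j_0$ is linked at the outset or remains linked; the sentence ``Proposition \ref{prop;included} prevents the principal coordinate from dropping out of the ledger prematurely'' asserts a property of the descent that neither that proposition nor convexity implies.
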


       \begin{proof}
         The first assertion is direct from the definitions. To prove the second assertion, take $X \notin \calW$. By Proposition \ref{prop;firstguy} there is $Z$ in  $\bbY_{\frac{c_*}{2} + 2m\kappa}(\calW, X) \cup\{X\}$ such that  $\bbY_{\frac{c_*}{2} }(\calW, Z) = \emptyset$. It is therefore a neighbor osculator of $\calW$. 
 
\end{proof}

       We define now admissible sets of osculators of a composite windmill $\calW$  that does not cover the entire set $\bbY_*$.

        If  $\calW$ is not $(\frac{c_*}{2}-20\kappa)$-convex, then the (only) admissible set of osculators for $\calW$ is  the
       set $\calR_{gap}$ of osculators of type gap in $\bbY_{j_0}$. Note that it can be the empty set if the gap osculators are not in the coordinate $j_0$.

       If  $\calW$ is $(\frac{c_*}{2}-20\kappa)$-convex 
       (but does not cover the entire set $\bbY_*$),   then an admissible set of osculators for $\calW$ is a set   $\calR=\{  G_W R \} $ for  a choice of an osculator $R$  (necessarily of type neighbor).

       We  define the unfolding of $\calW$ as follows.

       \begin{defi}(Unfolding)\label{def;unfolding}  
         Let  $\calW = (\calW_1, \dots,
         \calW_m, G_W, j_0)$ 
         be a composite widmill that does not contain the entire set $\bbY_*$, 
         and $\calR$ be an admissible set of osculators.

         Define,
       for all $i$, $\calW'_{i}$ to be the union of all the
       images of $\calW_{i}$ by elements of the group $G_{W'}$ generated by
       $G_W \cup \{\displaystyle \bigcup_{\calR} \G_R\}$.   
       The unfolding
       of $\calW$ is then 
       $(\calW_1', \dots, \calW_m', G_{W'}, j_0+1)$, 
       where $j_0+1$ is taken modulo $m$.

       If $\calW$ contains  $\bbY_*$, its unfolding is $\calW' = \calW$.
       \end{defi}

       Here is an obvious lemma.
       \begin{lemma} (Trivial unfolding) \label{lem;emptyR}
         Let $\calR$ be a choice of an admissible set of osculators of
         $\calW$. If $\calR$ is empty, then the unfolding 
         $\calW' = (\calW_1, \dots,
         \calW_m, G_W, j_0+1)$ is a composite windmill. 
       \end{lemma}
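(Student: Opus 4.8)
The plan is to notice that, when $\calR=\emptyset$, the unfolding changes nothing but the name of the principal coordinate, so that every clause of Definition \ref{def;CW} that does not mention $j_0$ is inherited for free, and only the convexity clause needs a short argument.

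First I would record that $G_{W'}$, being generated by $G_W\cup\bigcup_{R\in\calR}\Gamma_R=G_W$, equals $G_W$, and that for each $i$ the set $\calW'_i$, the union of the $G_{W'}$-translates of the already $G_W$-invariant set $\calW_i$, equals $\calW_i$. Hence $\calW'$ and $\calW$ coincide as tuples of subsets and of groups, the sole difference being the principal coordinate, now $j_0+1\pmod m$. Consequently the following requirements of Definition \ref{def;CW} hold for $\calW'$ simply because they hold for $\calW$ and are insensitive to the principal coordinate: $G_W$ is generated by $\{\Gamma_Y:Y\in\bigcup_{i=1}^m\calW_i\}$ (our windmills being full unless stated otherwise), each $\calW_i$ is a $G_W$-invariant subset of $\bbY_i$, $j_0+1\in\{1,\dots,m\}$, $G_W$ admits its partially commutative presentation with the same distinguished subset $\calW_*$, and the Greendlinger property (which never refers to $j_0$) holds as written. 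One also notes in passing that $\calW'$ is constructed over $\calW$.

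The only real point is the convexity upgrade: $\bigcup_i\calW_i$ is $\calL_{j_0}$-convex and must be shown $\calL_{j_0+1}$-convex. Reading off the definitions of $\calL$ and of the shift $\sigma$, one has $\calL_{j_0+1}(j)=\calL_{j_0}(j)+20\kappa$ for every coordinate $j\neq j_0$, whereas $\calL_{j_0+1}(j_0)=c_*$ is the minimal value of $\calL_{j_0+1}$. Fix $i$, points $X,Z\in\calW_i$, and a target coordinate $j$; I would check $\bbY^j_{\calL_{j_0+1}(j)}(X,Z)\subset\bigcup_k\calW_k$. If $j\neq j_0$ the threshold has only increased, so $\bbY^j_{\calL_{j_0+1}(j)}(X,Z)\subset\bbY^j_{\calL_{j_0}(j)}(X,Z)$, and the latter lies in $\bigcup_k\calW_k$ by the $\calL_{j_0}$-convexity of $\calW$. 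If $j=j_0$ the hypothesis $\calR=\emptyset$ is what is used: since, for a $(\frac{c_*}{2}-20\kappa)$-convex windmill that does not cover $\bbY_*$, every admissible set of osculators is a nonempty orbit $\{G_W R\}$, the emptiness of $\calR$ forces $\calW$ to fail $(\frac{c_*}{2}-20\kappa)$-convexity; so $\calR=\calR_{gap}=\emptyset$, i.e. $\calW$ has no gap osculator. Then any $Y\in\bbY^{j_0}_{c_*}(X,Z)$ satisfies $Y\in\bbY_{j_0}$, $X,Z\in\Act(Y)$ and $d^\ang_Y(X,Z)\geq c_*>\frac{c_*}{2}-20\kappa$, so $Y\notin\calW_{j_0}$ would make $Y$ a gap osculator of $\calW$, a contradiction; hence $Y\in\calW_{j_0}\subset\bigcup_k\calW_k$. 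This yields the $\calL_{j_0+1}$-convexity, and with it that $\calW'$ is a composite windmill.

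I do not anticipate a genuine obstacle, the lemma being elementary; the one spot deserving care is the bookkeeping that $\calR=\emptyset$ really does land us in the gap case (so that the possibility of no admissible neighbor osculator is not a competing alternative), together with the harmless discrepancy between the $\geq M$ convention in $\bbY^j_M$ and the strict inequality $>\frac{c_*}{2}-20\kappa$ in the definition of a gap osculator, which is absorbed since $c_*>\frac{c_*}{2}-20\kappa$ with room to spare. It is worth remembering that the $20\kappa$ gained in the coordinates $j\neq j_0$ of $\calL_{j_0+1}$ is exactly the slack that genuine (non-trivial) unfoldings will later consume against the degradation of convexity; here that slack is simply carried over.
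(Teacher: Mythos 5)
The paper labels this lemma as "obvious" and provides no proof, so there is no paper argument to compare against; nevertheless your proof is correct, and the one substantive step you isolate is indeed the only one that requires an argument. Your bookkeeping on the shift $\calL_{j_0}\mapsto\calL_{j_0+1}$ is accurate: the threshold increases by $20\kappa$ in every coordinate $j\neq j_0$ (so $\calL_{j_0+1}$-convexity there is inherited outright), while at $j=j_0$ it plummets from the maximum $c_*+20(m-1)\kappa$ to the minimum $c_*$, making the constraint strictly more demanding. Your resolution -- that an admissible $\calR=\emptyset$ can only arise as $\calR=\calR_{gap}=\emptyset$ (the neighbor-type choice being a nonempty orbit $G_WR$ by construction, with existence of $R$ guaranteed by Lemma \ref{lem;keepwalking} under the standing maximality of $\calW(0)$), hence $\calW$ has no gap osculator in $\bbY_{j_0}$, hence any $Y\in\bbY^{j_0}_{c_*}(X,Z)$ with $X,Z\in\calW$ of the same coordinate would be such an osculator unless it already lies in $\calW_{j_0}$ -- is exactly right, and the slack $c_*>\frac{c_*}{2}-20\kappa$ you invoke is genuine ($c_*>1000(\Theta+\kappa)$ gives ample room). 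The remaining clauses of Definition \ref{def;CW} (invariance, generating set, presentation, Greendlinger property) do not mention $j_0$ and carry over verbatim since $G_{W'}=G_W$ and $\calW'_i=\calW_i$ for all $i$, as you note, and the "constructed over" claim is immediate since $\calW'_*$ can be taken equal to $\calW_*$.
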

 
       We thus concentrate on the case where $\calR$ is non-empty.

       In the case $\calW_{j_0}$ is empty, we include here a convexity result
       for an intermediate step in the construction: adding  an
       admissible set of osculators $\calR$,
       which produces a
       non-full composite windmill.

       \begin{lemma}\label{lem;calRisconvex}

         Assume that $\calW$ is a full composite windmill of principal
         coordinate $j_0$,  with $\calW_{j_0} = \emptyset$. 

         Let $\calW_{j_0}^s$ be a set $\calR$ of admissible osculators
         as defined above, assumed non-empty.

         For all other
         coordinates, let $\calW_i^s = \calW_i$. 

         Then $\calW^s= (\calW_1^s, \calW_2^s, \dots, \calW_m^s, G_W, j_0)$
         is a non-full composite windmill  of principal
         coordinate $j_0$. If moreover $\calR$ is the orbit of a
         neighbor osculator,  and if $\calW$ is 
         $(\frac{c_*}{2}-20\kappa)$-convex,  then $\calW^s$    is $B$-convex, for $B=  \frac{c_*}{2}+10\kappa \leq \inf \calL$.

       \end{lemma}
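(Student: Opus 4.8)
The plan is to verify, clause by clause, that $\calW^s$ satisfies Definition \ref{def;CW} for a non-full group, and to establish the quantitative $B$-convexity separately. The key observation is that $\calW^s$ is obtained from $\calW$ by changing only the coordinate $j_0$, replacing $\calW_{j_0}=\emptyset$ by $\calR$, while keeping the group $G_W$ and all other coordinates unchanged; so most of the structure is inherited. The group $G_W$ is still generated by $\{\Gamma_Y:Y\in\bigcup_{i\neq j_0}\calW^s_i\}$ (as $\calW_{j_0}=\emptyset$), so $\calW^s$ is a non-full composite windmill with $I_*=\{1,\dots,m\}\setminus\{j_0\}$. Each $\calW^s_i$ is $G_W$-invariant: for $i\neq j_0$ this is the hypothesis on $\calW$, and for $i=j_0$ the set $\calR$ is either the orbit $G_W\cdot R$ or the set of gap osculators of $\calW$ in $\bbY_{j_0}$, the latter being defined purely from $\Act$, $d^\ang$ and the $G_W$-invariant subsets $\calW_i$, hence preserved by every automorphism in $G_W$. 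The partially commutative presentation of $G_{W^s}=G_W$, with representative set $\calW^s_*:=\calW_*$, is literally that of $\calW$, since no new generator appears and the condition ``$wX'\notin\Act(X)$'' characterising the relators is unchanged. The Greendlinger property also transfers: for $\gamma\in G_W\setminus\{1\}$ the shortening data supplied by $\calW$ can be taken in a coordinate $\neq j_0$, since $G_W$ is generated by rotations centred off $j_0$, so $\gamma$ is detected in a coordinate where $\calW$, hence $\calW^s$, is nonempty and unchanged.

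It remains to check that $\bigcup_i\calW^s_i$ is $\calL_{j_0}$-convex, and under the extra hypotheses $B$-convex. In the neighbor case $B$-convexity implies $\calL_{j_0}$-convexity, since $B=\tfrac{c_*}{2}+10\kappa\le c_*=\inf\calL_{j_0}$ (using $c_*>20\kappa$); moreover $\calW$ is already $B$-convex, being $(\tfrac{c_*}{2}-20\kappa)$-convex. So it suffices to examine pairs $X,Z$ of equal coordinate with at least one of them in $\calR\subset\bbY_{j_0}$. If $X,Z\in\calW_i$ with $i\neq j_0$ and the target coordinate is $j_0$, any $Y\in\bbY^{j_0}_B(X,Z)$ lies in $\bbY_{j_0}\setminus\calW_{j_0}$ and witnesses $d^\ang_Y(X,Z)\ge B>\tfrac{c_*}{2}-20\kappa$ with $X,Z\in\calW_i\cap\Act(Y)$, hence is a gap osculator of $\calW$: in the neighbor case Lemma \ref{lem;keepwalking} says there are none, so the set is empty; in the gap case it lies in $\calR_{gap}=\calR\subset\calW^s$. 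For a target coordinate $j\neq j_0$ the pair is already covered by the convexity of $\calW$.

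The core is the case $X,Z\in\calR$. By $G_W$-equivariance we may take $X=R$ and $Z=hR$ with $h\in G_W$. Every $G_W$-translate of $R$ is again a neighbor osculator, so Proposition \ref{prop;osc_cvx} applied to $\calW$ with $L=\tfrac{c_*}{2}-20\kappa$ and $L'=\tfrac{c_*}{2}$ shows each $\calW\cup\{gR\}$ is $(c_*-15\kappa)$-convex, comfortably stronger than $B$-convexity; the genuine issue is to separate two distinct translates $R$ and $hR$ by $\calW$. Writing $h$ as a product of rotations $g_\ell\in\Gamma_{A_\ell}$ with $A_\ell\in\calW$ (of coordinate $\neq j_0$), I would induct along the word: the intermediate points $g_\ell\cdots g_kR$ are again neighbor osculators, and any $Y\in\bbY^j_B(R,hR)$ not in $\calW$ is, using the neighbor property of the two endpoints together with closeness in inaction and the Behrstock inequality, localised on the ``$R$-side'' or the ``$hR$-side'' of the decomposition, i.e.\ becomes a point deep between $R$ (or $hR$) and a point of $\calW$; the neighbor property then forces $Y$ into $\calW$ unless $Y$ is itself a gap osculator, hence in $\calR$. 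The $\kappa$-type losses stay within the budget $B-(\tfrac{c_*}{2}-20\kappa)=30\kappa$. The same scheme yields the $\calL_{j_0}$-convexity required in the gap case, with the gap witnesses $A_X,B_X,A_Z,B_Z\in\calW$ of the gap osculators $X,Z$ as anchors: a Behrstock--betweenness computation gives $d^\ang_Y(A_X,A_Z)\ge\calL_{j_0}(j)-4\kappa$ for $Y\in\bbY^j_{\calL_{j_0}(j)}(X,Z)$, once $A_X,A_Z$ are transferred to a common coordinate by the rotation groups fixing the $\calW_i$, whence $Y\in\calW$ by $\calL_{j_0}$-convexity of $\calW$, or $Y\in\calR$ when $j=j_0$.

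The main obstacle is exactly this localisation step: showing that nothing deep can sit between two distinct members of the osculator orbit $\calR$ outside $\calW\cup\calR$. In the Bass--Serre/ping-pong picture this is the statement that the geodesic joining two ``leaves'' $R$, $hR$ must pass through the convex core $\calW$; in the axiomatic language it requires running the rotating-family bookkeeping carefully and controlling each $\kappa$-sized error, so that the resulting convexity constant stays $\le\inf\calL$.
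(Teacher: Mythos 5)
Your structural preliminaries (inheritance of the group, the invariance of each $\calW^s_i$, the presentation, the transfer of the Greendlinger property to coordinates $\neq j_0$) match what one needs, and the paper in fact skips this routine verification and goes directly to the convexity claim, which is indeed the substance of the lemma. Your reduction of the convexity question to the case $X=R$, $Z=hR$ for $h\in G_W$ and to the case $j=j_0$ is also correct.

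Where your proposal parts company with the paper, and where it has a genuine gap, is in the treatment of two translates $R$ and $hR$. You propose to write $h$ as a word $g_1\cdots g_n$ in the rotation generators, and to induct along the word, tracking on which ``side'' a putative deep point $Y$ sits. You yourself flag this as the unsolved obstacle, and rightly so: each step of such an induction produces a fresh $\kappa$-sized loss via Behrstock and triangle inequalities, while the word length $n$ is unbounded, so the total loss $\approx n\kappa$ cannot be kept inside your budget of $30\kappa$ without an additional idea (a monotonicity/ordering argument as in the proof of Proposition~\ref{prop;principal_tree}, say). The paper sidesteps the induction entirely by invoking the very Greendlinger property that you already observed is available for $\calW$: given $\gamma\in G_W$, Greendlinger supplies a \emph{single} coordinate $j$ and a \emph{single} $Y_j\in\calW_j$ with $d_{Y_j}(R,\gamma R)>\Theta_{Rot}-2\Theta_P-\kappa$. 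Then for any candidate $Y\in\bbY^i_B(R,\gamma R)$ (reduced, via a shortening pair, to the case $Y\in\Act(Y_j)$), Behrstock forces $d^\ang_Y(Y_j,R)\le\kappa$ or $d^\ang_Y(Y_j,\gamma R)\le\kappa$; the neighbour osculation condition $\bbY_{c_*/2}(\calW,R)=\emptyset$ bounds the other term by $c_*/2$ when $Y\notin\calW_i$, and the triangle inequality gives $d^\ang_Y(R,\gamma R)\le\tfrac{c_*}{2}+2\kappa<B$, a contradiction. In short, the Greendlinger property of $\calW$ is not merely an axiom to be re-verified for $\calW^s$ --- it is the key tool that replaces your word-length induction and makes the constant accounting one-shot and lossless. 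Incorporating that single application of Greendlinger is what your proposal is missing.
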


\begin{proof}
If $\calR=\emptyset$, there is nothing to prove.
Consider the case of the orbit of a neighbor osculator. 
It suffices to check that $\calW_{j_0}^s  (= G_W R)$ is convex in the
sense that for all $\gamma\in G_W$ and all $i$    
the set 
$\bbY^i_{B} (R, \gamma R)$ is in $\calW_i$.

By the Greendlinger Property,   
 given $\gamma$, there
exists $j$,  and $Y_j\in
\calW_j$ such that $d_{Y_j}(R, \gamma R) >\Theta_{Rot} -
  2\Theta_P -\kappa$, or $R=\gamma R$ (if $R$ is not active for all the shortening
pairs of $\gamma$).

Of course we consider only the first case of the alternative.

Assume that some $Y\in \bbY_i $  is in $\bbY_{B}^{(i)} (R,
\gamma R)$.  

If $Y\notin \Act(Y_i)$, then one can use a shortening pair at $Y_i$  to
reduce the length of $\gamma$ in its principal coordinate, and this
shortening pair gives $\gamma'$ such that $d_Y^\ang(R, \gamma R) =
d_Y^\ang(R, \gamma' R)$. Thus, $Y\in \bbY^{(i)}_{B} (R, \gamma' R)$
as well, and by performing this reduction sufficiently many times, we
may assume that $Y\in \Act(Y_i)$.

By Lemma \ref{lem;transfert}, either  $R$ or $\gamma R$ approximates by
$\kappa$ the projection of $Y_j$ on $Y$. 
 
 Say that $d^\ang _Y(\gamma R, Y_j) \leq \kappa$. By osculation 
 if $Y\notin \calW_i$, one has  $d_Y^\ang(Y_j, R) \leq \frac{c_*}{2}
 $. Therefore, 
 one has $d_Y^\ang(\gamma R, R)\leq  d^\ang _Y(\gamma R, Y_j) +d^\ang
 _Y( R, Y_j)  +\kappa \leq  \frac{c_*}{2} +2\kappa $ which is less than $B$.

If now $d^\ang _Y( R, Y_j) \leq \kappa$, one has $d_Y^\ang (R,\gamma
R) $ is within $2\kappa$ from $d_Y^\ang (Y_j ,\gamma
R) $, which equals $d_{\gamma^{-1}Y}^\ang (\gamma^{-1} Y_j ,
R) $. Of course, $Y\notin \calW_i$ if and only if $\gamma^{-1}Y \notin
\calW_i$, hence, if it is the case, by osculation of $R$,
$d_Y^\ang(Y_j, \gamma R) \leq \frac{c_*}{2}$, 
and  $d_Y^\ang(\gamma R, R)\leq  \frac{c_*}{2} +
 2\kappa \leq B$.

In the case where $\calR$ is the set of gap osculators, the proof is
similar.  Indeed, if $R_1$ is a gap between $X_1$ and $Z_1$, and
$R_2$ is a gap between $X_2$ and $Z_2$, and  if $Y$ is
between $R_1$ and $ R_2$, so that $d_Y^\ang(R_1,
 R_2) \geq c_*+20(m-1)\kappa (=\calL_{j_0}(j_0))$,  then $Y$ is also between $X_1$ (or $Y_1$) and
$X_2$ (or $Y_2$) so that, say, $d_Y^\ang(X_1,
 X_2) \geq c_*+20(m-1)\kappa -3\kappa$. One can transfer $X_2$ in the
 coordinate of $X_1$ by Lemma \ref{lem;transfert}, in $\calW$ (in the
 $\Gamma_{X_2} $-orbit of $X_1$). The convexity of $\calW$
then shows that $Y\in \calW$.  

\end{proof}

The aim of the next sections is to prove the following.

\begin{prop}  \label{prop;unfold_main} If   $\calW= (\calW_1, \dots, \calW_m, G_{W'}, j_0)$ is a
  (full) composite windmill, and  $\calR$ is an admissible set of osculators, then the unfolding  $\calW'= (\calW'_1, \dots, \calW'_m, G_{W'}, j_0+1)$ is a
  (full) composite windmill, and $\calW'_*$ can be chosen to contain
  $\calW_*$ (in other words, $\calW'$ is constructed over $\calW$).
\end{prop}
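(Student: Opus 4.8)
The plan is to verify each clause of Definition~\ref{def;CW} for $\calW'$ in turn, treating the two kinds of admissible osculator sets ($\calR_{gap}$ when $\calW$ is not $(\frac{c_*}{2}-20\kappa)$-convex, and $\calR=G_WR$ for a neighbor osculator otherwise) in parallel wherever possible. The invariance of each $\calW'_i$ under $G_{W'}$ is automatic from Definition~\ref{def;unfolding}; the identity $G_{W'}=\langle G_W\cup\bigcup_{\calR}\Gamma_R\rangle$ is how $G_{W'}$ is defined. So the real content is: (i) the $\calL_{j_0+1}$-convexity of $\bigcup_i\calW'_i$; (ii) the partially commutative presentation; (iii) the Greendlinger property. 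For (ii) the key input is that the added rotations $\Gamma_R$, $R\in\calR$, have trivial intersection with (indeed, interact trivially with) $G_W$ except through the commutation relations $[\Gamma_R,\Gamma_{Y'}]=1$ for $Y'\notin\Act(R)$, so that $G_{W'}$ decomposes as a free product over the $G_W$-orbits of $\calR$, amalgamated along the commuting data. This is exactly the Bass--Serre/tree picture from \cite{DGO} transported into the axiomatic setting: the unfolding in the coordinate $j_0$ of the osculator produces a genuine tree (vertices $=$ $G_{W'}$-images of $\bigcup_i\calW_i$ together with $G_{W'}$-images of $\calR$, edges recording inclusions, trivial edge groups because no automorphism fixes two distinct elements of $\bbY_*$), and the partially commutative presentation is read off from that tree: generators are those of $G_W$ plus generators of $\Gamma_R$ for one $R$ per $G_W$-orbit in $\calR$, and the new relators are precisely $[s,ws'w^{-1}]=1$ whenever $wX'\notin\Act(X)$, which is the content of the clause ``$\Gamma_X,\Gamma_Z$ commute if $X\notin\Act(Z)$'' in Definition~\ref{def;CRF} combined with symmetry in action. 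One takes $\calW'_*=\calW_*\cup\{R\}$ (one representative per orbit), which gives the ``constructed over'' clause.

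The convexity step (i) is where the ``plate spinning'' happens and is the main obstacle. In the coordinate $j_0$ one gets very good convexity of $\calW'_{j_0}$ essentially for free from the tree structure, exactly as in Lemma~\ref{lem;calRisconvex} (the neighbor case uses Proposition~\ref{prop;osc_cvx}; the gap case uses the orbit estimates Lemma~\ref{lem;orbit_estimates} and the $\calL_{j_0}$-convexity of $\calW$). The issue is the other coordinates $j\neq j_0$: unfolding by $G_{W'}$ need not produce anything tree-like there, and could in principle degrade convexity. The strategy is to control this degradation quantitatively: using the projection-system axioms (betweenness transitivity, Lemma~\ref{lem;bet_trans}; Propositions \ref{prop;included}, \ref{prop;firstguy}), one shows that a point $Y\in\bbY_j$ lying between two $G_{W'}$-translates of points of $\calW_j$ either already lies in $\calW_j$, or its ``betweenness witness'' can be pushed, via a shortening pair supplied by the Greendlinger property of $\calW$, to a configuration already inside $\calW$ — at the cost of a bounded additive loss in the convexity constant, amounting to $20\kappa$ per coordinate. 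This is exactly why $\calL$ is defined with the staircase of $20(m-k)\kappa$ terms and why the principal coordinate cycles: after $m$ unfoldings one returns to the starting coordinate with the constant restored, so the loss never accumulates past $\Theta_P-\kappa$. Making this precise — tracking for each $j$ which translates of $\calW$ the new between-points can belong to, and checking the arithmetic closes — is the technical heart, and it is what the subsequent sections (which I may invoke) are devoted to.

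Finally (iii), the Greendlinger property for $\calW'$: given $\gamma\in G_{W'}$, one uses its normal-form/tree decomposition relative to the free-product-with-commutation structure to locate a coordinate $i(\gamma)$ and, for each $X\in\calW'_{i(\gamma)}$, either $\gamma\in\Gamma_X$ or a vertex $R\in\calW'_{i(\gamma)}$ that is ``deep'' in the sense $d_R(X,\gamma X)>\Theta_{Rot}-2\Theta_P-\kappa$. The rotation hypothesis in Definition~\ref{def;CRF} (if $d_Y(X,Z)\le\Theta_P$ then $d_Y(X,gZ)\ge\Theta_{Rot}$ for $g\in\Gamma_Y\setminus\{1\}$) is what converts the large $\Theta_{Rot}$ angle at $R$ into the existence of a shortening pair $(R,\gamma_s)$ with $d_R(X,\gamma_s\gamma X)\le 2\Theta_P+3\kappa$; the inequalities are chosen precisely so the counting survives one more unfolding. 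The Greendlinger property of the old $\calW$ feeds into both this step and step (i), so the induction is genuinely simultaneous across all the bullets of Definition~\ref{def;CW}, and one must be careful to run it in the right order: presentation and tree structure first (they need only the old convexity), then Greendlinger, then the degraded convexity of the non-principal coordinates.
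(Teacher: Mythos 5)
Your overall route is the paper's: establish a Bass--Serre tree in the principal coordinate $j_0$ (Proposition~\ref{prop;principal_tree}), derive the presentation and Greendlinger property from it, and separately control degradation of convexity in the other coordinates (Proposition~\ref{prop;unfolding_is_convex}), then assemble the clauses of Definition~\ref{def;CW}. The ordering remark at the end --- tree first (needing only old convexity and old Greendlinger), then new convexity --- also matches the logical dependency in the paper.

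However, there is a genuine error in the justification of the tree step. You write that the unfolding tree has ``trivial edge groups because no automorphism fixes two distinct elements of $\bbY_*$.'' That statement is false in the composite setting: by Definition of a rotation, an element of $\Gamma_X$ fixes not just $X$ but \emph{every} $Y\notin\Act(X)$, and the axiom ``$X\notin\Act(Z)\Rightarrow[\Gamma_X,\Gamma_Z]=1$'' exists precisely because distinct rotation groups may share fixed elements and commute. The correct graph of groups has vertex group $G_W$ at the old windmill, vertex group $\Gamma_R\times(G_W)_R$ at each orbit representative $R\in\calR^t$ (where $(G_W)_R$ is generated by the $\Gamma_X$ with $X\in\calW\setminus\Act(R)$), and edge groups $(G_W)_R$, which are typically nontrivial. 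This is exactly the locus where the composite theory departs from the free-product picture of \cite{DGO}: if one really had trivial edge groups one would obtain $G_W * \bigl(\ast_R\,\Gamma_R\bigr)$, losing all commutators and hence producing a free, not partially commutative, presentation --- contradicting the very relations $[s,ws'w^{-1}]=1$ you correctly want. Your sentence ``amalgamated along the commuting data'' says the right thing, but the ``trivial edge groups'' claim contradicts it and would break both the presentation clause and the structure needed for Greendlinger. (A smaller mismatch: the tree vertices map to subsets of $\bbY_{j_0}$ only --- white vertices to $G_{W'}$-translates of $\calW^s_{j_0}$, black to translates of osculators --- not to translates of $\bigcup_i\calW_i$.) The injectivity of the natural map from the fundamental group of the graph of groups onto $G_{W'}$, hence that the presentation is not over-determined, is then exactly what the metric estimates of Proposition~\ref{prop;principal_tree} are there to supply.
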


       \subsubsection{Unfolding a tree}

       \begin{prop} (Principal coordinate tree) \label{prop;principal_tree}

         Consider a full composite windmill $\calW$, of
         principal coordinate $j_0$.

         Let $\calR\neq \emptyset$ be an admissible set of osculators as defined in
         the previous section.  If  $\calW_{j_0} = \emptyset$, let $
         \calW^s_{j_0}=\calR$, and otherwise let
         $\calW^s_{j_0}=\calW_{j_0} $.

         There exists a $G_{W'}$-tree $T$, bipartite, with black
         and white vertices,  with an  equivariant
         injective map $\psi : T\to \calP(\bbY_{j_0})$ (the set of
         subsets of $\bbY_{j_0}$)  that sends  black vertices to images
         of osculators  by $G_{W'}$, and white vertices to
         images of $\calW^s_{j_0}$ by $G_{W'}$, and that sends the
         neighbors (in $T$) of the preimage of $ \calW^s_{j_0}$ to
         $\calR$.

         Moreover, for any pair of distinct white vertices $w_1, w_2$,
         and any black vertex $v$ in the interval between them (in $T$), and
         any $X_1\in \psi(w_1), X_2\in \psi(w_2)$,   one has
         $d_{\psi(v)}(X_1,X_2) \geq \Theta_{Rot} - 2\Theta_P -\kappa$.

         Finally, if $w_1, w_2$ are white vertices for which the path
         from a black vertex $v$ starts by the same edge, then for any 
         $X_1\in \psi(w_1), X_2\in \psi(w_2)$,   one has
         $d_{\psi(v)}(X_1,X_2)   \leq 2\Theta_P +3\kappa$. 
       \end{prop}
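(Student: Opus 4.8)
The idea is to build the tree $T$ by Bass–Serre theory from an explicit graph-of-groups decomposition, exactly mimicking the ``unfolding'' heuristic of \cite{DGO} but carried out entirely inside the principal coordinate $\bbY_{j_0}$, where the projection complex is genuinely a quasi-tree. I would first set $\calW^s_{j_0}$ as in the statement and let $G_{W'}=\langle G_W\cup\bigcup_{R\in\calR}\Gamma_R\rangle$. The starting point is the observation that no nontrivial element of $\Gamma_R$ lies in $G_W$: by Lemma \ref{lem;i=1} (in the neighbor case, via Proposition \ref{prop;osc_cvx}, and in the gap case directly) one has $d_{R_1}(X_2,R_3)\leq\Theta_P$ for osculators, while the rotating-family axiom forces $d_R(X,gX)\geq\Theta_{Rot}$ for $g\in\Gamma_R\setminus\{1\}$; since $\Theta_{Rot}$ is chosen much bigger than $\Theta_P$ and $G_W$ preserves $\calW_{j_0}$, a relation $g\in\Gamma_R\cap G_W$ would produce a contradiction with the Greendlinger property of $\calW$. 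Likewise, for $R\neq R'$ in $\calR$ (or in distinct $G_W$-translates), $\Gamma_R\cap\Gamma_{R'}=\{1\}$ and $\Gamma_R\cap g\Gamma_{R'}g^{-1}=\{1\}$, again because any such common element rotates by at least $\Theta_{Rot}$ around two distinct centers which are at bounded $d^\ang$-distance from each other. This is precisely the input needed to assert that $G_{W'}$ splits as a graph of groups with one vertex group $G_W$ (the white orbit $G_{W'}\cdot\calW^s_{j_0}$), vertex groups $G_{W'}\cdot\{\Gamma_R\}$ (the black orbits), and \emph{trivial} edge groups (no element fixes both a white base and a black center).

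Next I would realize this splitting geometrically: let $T$ be the Bass–Serre tree of that graph of groups, with white vertices the cosets $g\cdot o_W$ ($o_W$ the base white vertex, stabilizer $G_W$) and black vertices the cosets $g\cdot o_R$ for $R\in\calR$ (stabilizer $gG_Rg^{-1}$ where $G_R\supset\Gamma_R$ is an appropriate stabilizer — here one must be slightly careful and take as black vertex group the full stabilizer in $G_{W'}$ of the \emph{set} $\{R\}$, using Ellipticity to see it acts with bounded orbits). The equivariant map $\psi$ sends $o_W\mapsto\calW^s_{j_0}$ and $o_R\mapsto\{R\}$ and is extended equivariantly; injectivity follows because distinct cosets give distinct translates of $\calW^s_{j_0}$ (resp. of $R$), which is where one again invokes that $G_W$ does not move $\calW_{j_0}$ trivially outside itself together with the convexity/Greendlinger package. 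That the neighbors of $o_W$ map exactly to $\calR$ is built into the graph of groups: edges at $o_W$ are indexed by the admissible osculators, one $\Gamma_R$-orbit of edges per $R$, which collapses to a single edge per element of $\calR$ since $\Gamma_R$ fixes $R$.

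The two metric estimates are the heart of the matter, and I expect the second one (the ``same first edge'' inequality) to be the main obstacle, since it is a genuine upper bound rather than a coarsely-automatic lower bound. For the first estimate: if $v$ is a black vertex strictly between white vertices $w_1,w_2$, then writing $\psi(v)=\{R\}$, $\psi(w_k)\ni X_k$, one reduces by equivariance to the case $w_1=o_W$, so $X_1$ can be taken in $\calW_{j_0}$ (or in $\calR$ when $\calW_{j_0}=\emptyset$), and $w_2=g\cdot o_W$ with the path from $o_W$ to $w_2$ passing through $v=\gamma\cdot o_R$; then $X_2=\gamma X_2'$ for $X_2'$ a translate under an element not in $\Gamma_R$, so $d_R(X_2,X_1)$ and $d_R(\gamma^{-1}X_1, X_2')$-type quantities are controlled, and the rotating-family axiom (fourth bullet of Definition \ref{def;CRF}) upgrades a $\leq\Theta_P$ gap into a $\geq\Theta_{Rot}$ jump for $R$. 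Concretely one checks $d_R(X_1,R)\leq\Theta_P$ and $d_R(X_2,R)\leq\Theta_P$ would be false — rather, crossing $v$ means $X_1$ and $X_2$ are separated at $R$, and one shows $d_R(X_1,X_2)\geq\Theta_{Rot}-2\Theta_P-\kappa$ by using the axiom on the pair $(X_1,X_2)$ with the nontrivial rotation that distinguishes the two sides, and coarse triangle inequality to absorb the error terms (each $\Theta_P$ accounting for the bounded distance from a $\calW$-point to $R$, via Lemma \ref{lem;i=1}, and the $\kappa$ for the coarse triangle inequality). For the second estimate, if $w_1,w_2$ are white vertices whose geodesics from the black vertex $v$ share the first edge $e$, then both $w_1,w_2$ lie in the same ``half-tree'' on the far side of $e$ from $v$; pick the white endpoint $w_0$ of $e$ and write $X_0\in\psi(w_0)$; then $X_1,X_2$ are both in $G_{W_0}$-translates of $\calW_{j_0}$ with $W_0$ the stabilizer of $w_0$, and one runs the Greendlinger property of the windmill $\calW$ (transported by the group element moving $o_W$ to $w_0$) to write $\gamma=\gamma_s\gamma'$ shortenings that replace $X_k$ by points with $d_R(X_k,\cdot)\leq 2\Theta_P+3\kappa$-type control, concluding $d_{\psi(v)}(X_1,X_2)\leq 2\Theta_P+3\kappa$ by triangle inequality and Behrstock. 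The delicate point throughout is bookkeeping the constants so that the Greendlinger shortening at $w_0$ does not interact with the separation at $v$ — this is exactly the ``plate spinning'' the introduction advertises, and it is why the paper pre-commits to $\Theta_{Rot}\gg 2\Theta_P\gg 2c_*\gg 20(\Theta+\kappa)\gg\theta$.
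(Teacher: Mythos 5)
Your construction has a genuine gap: the claim that the graph of groups has \emph{trivial} edge groups is false, and this is not a side issue — it is the mechanism by which the commutation relations enter the presentation. The paper's decomposition sets the black-vertex group to be $\Gamma_R\times (G_W)_R$, where $(G_W)_R$ is generated by $\bigcup_{X\in\calW\setminus\Act(R)}\Gamma_X$, and the edge group is $(G_W)_R$, not trivial. Indeed, any $X\in\calW$ with $X\notin\Act(R)$ satisfies $R\notin\Act(X)$ (symmetry in action), so a rotation around $X$ fixes $R$ pointwise; hence $(G_W)_R\subset G_W$ fixes the black center $R$, contradicting your stated justification that ``no element fixes both a white base and a black center.'' With trivial edge groups you would be asserting a free-product decomposition $G_W * (\ast_R\Gamma_R)$, whose Bass–Serre tree does \emph{not} carry a $G_{W'}$-action: the relations $[\Gamma_R,(G_W)_R]=1$ hold in $G_{W'}$ but not in the free product, so the quotient kernel is nontrivial and would have to act trivially on the tree, which it cannot in a free product. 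Your remark about taking the full stabilizer of $\{R\}$ as the black vertex group is the right instinct, but it is incompatible with trivial edge groups: the intersection of that stabilizer with $G_W$ already contains $(G_W)_R$.

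A second, less central, issue: the paper does not attempt to verify intersections like $\Gamma_R\cap G_W=\{1\}$ directly, and it does not prove the splitting by collision arguments. It builds the candidate tree $T$ from the abstract graph of groups (with the nontrivial edge groups), observes that $G_{W'}$ is a quotient of its fundamental group, and then establishes that $T$ descends to a genuine $G_{W'}$-tree by proving $\psi$ is \emph{injective}. That injectivity is exactly the content of the $d_{\psi(v)}(X_1,X_2)\geq\Theta_{Rot}-2\Theta_P-\kappa$ estimate together with $\calL_{j_0}$-convexity of $\calW_{j_0}$: a loop in the image would produce two points of $\calW_{j_0}$ separated at some $R\notin\calW_{j_0}$ by more than the convexity constant, a contradiction. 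Moreover the long-range estimate is propagated along the path by an induction that uses the \emph{monotonicity} axiom of the BBF modified distances — not just triangle inequality and Behrstock as you sketch — and this is what makes the estimate hold uniformly for arbitrarily long intervals in $T$. Your outline of the base case (adjacent white vertices across a black one, using convexity to get $\leq\Theta_P$ and the rotating-family axiom to get $\geq\Theta_{Rot}$) matches the paper's, but without monotonicity your argument does not control the constants for longer paths.
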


       \begin{proof}
       
      Take a transversal $\calR^t$ of $\calR$ under
       the action of $G_W$.   For each $R\in \calR^t$, let $(G_W)_R$
       the subgroup of $G_W$ generated by  $\bigcup_{X \in \calW \setminus \Act(R)}
       \Gamma_X$.

       Set $T$ to be the Bass-Serre tree of the
       (abstract) graph of groups whose vertex groups are $G_W$ and
       the groups $\G_R \times (G_W)_R  ,
       R\in \calR^t$, and the edges are the pairs $(G_W, R), R\in
       \calR^t$, and the edge groups are the groups $(G_W)_R$. 

       Let $\widetilde{G_{W'}}$ the fundamental group of this graph of
       groups. The group $G_{W'}$ is a quotient of this group, since
       it is generated by $G_W$ and the stabilizers of elements $R$ of
       $\calR^t$, which, by assumption (Definition \ref{def;CRF}), are  
       direct sums of their rotation group
       with the  groups $(G_W)_R$.

       $T$ is
       a tree, endowed with a $\widetilde{ G_{W'}}$-action, bipartite, 
       and with an  equivariant (with respect to $\widetilde{G_{W'}}
       \onto G_{W'}$) 
          map $\psi\,:\, T\to \calP(\bbY_j)$ that sends  black vertices to images
         of elements of $\calR$  by $G_{W'}$, and white vertices to
         images of $\calW^s_{j_0}$  
         by $G_{W'}$.

         We need to show that it is injective, and at the same time,
         we will show the estimate of the end of the statement.

         Consider a path $p$
         of
         $T$, starting and ending at  white vertex. Up to cyclic permutation, and up to
         the group action, we may assume that the path $p$ starts at
         the vertex fixed by $G_W$, and its second vertex is fixed by
         some $R_1\in \calR^t$, and that its length is even. 

         Let us denote by $p_0, p_1, \dots , p_N$ the consecutive
         vertices of $p$, and let $X_{2i}$ be a choice of a element of
         $\psi(p_{2i})$, and $R_{2i+1} = \psi(p_{2i+1})$.

         The monotonicity property in the coordinate $j_0$ says that
         if $d_Y(X,Z) \geq \Theta$ then $d_W(X,Z) \geq d_W(X,Y)$. 

         We
         will use it in an induction  
         to establish that  for all $k$ odd, and all $i$ in $1 \leq i\leq
         \frac{N-k}{2}$ and all $j$ in $1\leq j\leq \frac{k-1}{2} $,
         one has $$ \begin{array}{cccc} & d_{R_k}(R_{k-2j}, R_{k+2i}) &\geq & \Theta_{rot}
         -2\Theta_P -\kappa  \\ 
\forall X_{ s}\in  \psi(p_{s}), &        d_{R_k}(X_{k-2j+1},
                                  X_{k+2i-1}) & \geq  & \Theta_{rot}
         -2\Theta_P -\kappa \end{array}$$  

         The case $i,j=1$ happens as follows.  Choose $k$. 
         
         We first show how a black vertex separates two adjacent white
         vertices. Note that there is $X'_{k+1} \in \psi(p_{k+1})$ that equals
         $g X_{k-1}$ for some $g \in
         \G_{R_{k}}\setminus \{0\}$.  By  convexity of
         $\calW^s_{j_0}$ (ensured by assumption, or by 
         Lemma \ref{lem;calRisconvex} in case $\calW_{j_0}$ is empty), 
         $d_{R_{k}} (X_{k+1}, X'_{k+1}) \leq
         \Theta_P$.  And by assumption on the rotating groups,
         $d_{R_{k}} ( X_{k-1}, X'_{k+1}) \geq
         \Theta_{Rot}$. Thus,    $d_{R_{k}} ( X_{k-1}, X_{k+1}) \geq
         \Theta_{Rot}-\Theta_P-\kappa$, the second inequality.

         By Lemma \ref{lem;i=1},   $d_{R_{k}}(X_{k+1},  R_{k+2}) \leq
         \Theta_P$ and $d_{R_{k}}(X_{k-1},  R_{k-2}) \leq
         \Theta_P$. By triangle inequality, we get $
         d_{R_k}(R_{k-2}, R_{k+2}) \geq  \Theta_{rot}
         -2\Theta_P-\kappa$.  We have both inequalities.

         Assume that the inequalities are proven for all $(i, j)$ such that
         $i+j \leq i_0$  (and for all
         $k$), and  let us choose $k$ and $(i, j)$ with $i+j\leq i_0$, and prove the inequality  
         for $(i+1, j)$.

         Set  $Y= R_{k+2i}$, and  and
         $W = R_{k}$.    In the following we set either $Z=
         R_{2i+k+2}$ or $X_{2i+k+1}$,      and  either $X=R_{k-2j}$ or
         $X = X_{k-2j+1}$.

         By the inductive
         assumption for $k'=k+2i$, $i'=1, j'= i $, one has $d_Y(W ,
         Z)\geq  \Theta_{rot} -2\Theta_P-\kappa$. 

         Also for  $k$, $i$ and $j$  the induction gives    $d_{W}(Y,X)
         \geq  \Theta_{rot} -2\Theta_P-\kappa$.  Behrstock inequality
         then provides  $d_{Y}(W,X) \leq \kappa$ and therefore  $d_Y(X ,
         Z)\geq  \Theta_{rot} -2\Theta_P-3\kappa$. This is still far
         above $\Theta$. One thus may apply the monotonicity property
         and obtain  $d_W(X,Z) \geq d_W(X,Y)$. In other words, $$
         d_{R_k}(R_{k-2j}, R_{k+2i+2}) \geq  \Theta_{rot}
         -2\Theta_P-\kappa. $$

         The inequality is also proven for $(i, j +1)$ in the same
         manner, symmetrically. This finishes the induction.

         In the end, we have obtained for $i=N/2-1$, and $k=1$,  
         $ d_{R_1} (X_{0}, R_{N-1}) \geq
         \Theta_{Rot} -\Theta_P$, and it follows that  $ d_{R_1} (X_{0},
         X_{N}) \geq  \Theta_{Rot} -2\times \Theta_P-\kappa$, which is the
         estimate of the statement.  

         If we assume that $p$
         is mapped to a loop,  $\calW_{j_0}$ contains both $X_0$ and
         $X_N$, and not $R_1$ (it is an osculator),  the convexity of
         $\calW_{j_0}$  imposes  $\Theta_{Rot} -2\times \Theta_P-\kappa \leq
         \Theta_P$, meaning $\Theta_{Rot} \leq \Theta_P+\kappa$.
         and this contradicts our choice of $\Theta_{Rot}$. 

         It also follows from this analysis that if $w_1, w_2$ are
         white vertices of $T$ and $v$ is a black vertex between then,
         then $d_{\psi(v)} ( X_1, X_2 ) \geq \Theta_{Rot}- 2\Theta_P
         -\kappa$ (in our induction above). A final use of
         Behrstock inequality provides that whenever the paths from
         $v$ to a white vertex $w_1$   has more than three edges, then if $v'$ is the
          first black vertex after $v$ on this path, and  if $X_1\in
          \psi(w_1)$, then         
         $d_{\psi(v)}(X_1, \psi(v')) \leq \kappa$. It follows from
         that and Lemma \ref{lem;i=1} that if
         $w_2$ is another white vertex $w_1$  whose path from $v$
         starts at the same edge, $d_{\psi(v)}(X_1, \psi(v')) \leq
         2\Theta_P + 3\kappa $.

         \end{proof}

         The former proposition allows to define, for each element $\gamma$ of
         $G_{W'}$, its principal coordinate, and its principal
         tree. Indeed, if  $\gamma\in G_{W'}$ is not conjugated to $G_W$, the
         proposition shows that it is either loxodromic or the
         stabilizer of a black vertex on the tree $T$. Then
         we define its principal coordinate as $j_0$ and its principal
         tree as $T$. If it is in   $G_W$, or conjugate in it, its principal coordinate
         and its principal tree are defined inductively, according to
         the process of unfoldings of composite windmills.

       \subsubsection{Preservation of convexity}

       \begin{prop}(Convexity of $\calW'$)\label{prop;unfolding_is_convex}

         Let $\calW = (\calW_1, \dots, \calW_m, G_W, j_0)$ be a
         composite windmill (possibly non-full).   

         Assume that $\calR$ is an admissible set  of 
         osculators, and   $\calW'$ the unfolding defined in 
         Definition \ref{def;unfolding}

         If $\calR$ consists of the orbit of a neighbor,  then $\calW'$
         is $c_*$-convex.

         If $\calR$ consists of gap osculators, then $\calW'$ is
         $\calL_{j_0+1}$-convex.

       \end{prop}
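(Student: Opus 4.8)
The strategy is to transplant the Bass--Serre localization argument of \cite{DGO} into the principal coordinate $j_0$, where Proposition~\ref{prop;principal_tree} supplies a tree $T$ on which $G_{W'}$ acts with long‐distance estimates on the black (osculator) vertices, and then to propagate convexity to the other coordinates by transferring points with the orbit estimates of Lemmas~\ref{lem;transfert}--\ref{lem;orbit_estimates}; the $20\kappa$‐increments built into the vectors $\calL_j$ are exactly the slack available for those transfers, and the two cases are separated by the two base bounds of Lemma~\ref{lem;i=1} ($c_*$ for a neighbor osculator, $\Theta_P$ for a gap one). Concretely: take $X,Z\in\calW'$ with $i(X)=i(Z)=:i$, a coordinate $j$, and $Y\in\bbY^j_M(X,Z)$, where $M=c_*$ in the neighbor case and $M=\calL_{j_0+1}(j)$ in the gap case; we must show $Y\in\calW'$. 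Since $G_{W'}$ acts by automorphisms, it preserves every $d^\ang$, every $\Act(\cdot)$, and each $\calW'_k=G_{W'}\calW_k$, so after translating we may assume $X\in\calW_i$ and write $Z\in g\calW_i$ for $g\in G_{W'}$.

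If $g$ stabilises the base white vertex $w_0$ of $T$ (in particular if $Z\in\calW_i$, using that $\calW_i$ is $G_W$‐invariant), the claim follows from the convexity of $\calW$: in the neighbor case $\calW$ is $(\tfrac{c_*}{2}-20\kappa)$‐convex, hence a fortiori $c_*$‐convex; in the gap case $\calW$ is $\calL_{j_0}$‐convex, and for $j\neq j_0$ one has $\calL_{j_0+1}(j)=\calL_{j_0}(j)+20\kappa\ge\calL_{j_0}(j)$, so $\bbY^j_M(X,Z)\subseteq\bbY^j_{\calL_{j_0}(j)}(X,Z)\subseteq\calW$. The coordinate $j=j_0$ of the gap case is not immediate this way; there one checks instead that any $Y\in\bbY^{j_0}_{c_*}(X,Z)\setminus\calW_{j_0}$ would be a gap osculator of $\calW$ (take the pair $X,Z$ itself as witness, recalling $\bbY_{j_0}\subseteq\Act(Y)$), hence lies in $\calR\subseteq\calW'_{j_0}$.

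In the remaining case $g\notin G_W$, I first treat $i=j=j_0$. Let $w_0=p_0,v_1,p_2,\dots,p_N=gw_0$ be the geodesic in $T$, write $R_{2k+1}:=\psi(v_{2k+1})$ for the intermediate osculators and $X_{2k}\in\psi(p_{2k})\subseteq\calW'_{j_0}$, so $X_0=X$. By Proposition~\ref{prop;principal_tree} each $d_{R_{2k+1}}(X,Z)\ge\Theta_{Rot}-2\Theta_P-\kappa\gg\Theta$; together with the Order and Monotonicity properties of the $j_0$‐projection system (and Proposition~\ref{prop;induced_order}) this forces $Y$ and all the $R_{2k+1}$ into the finite totally ordered set $\bbY^{j_0}_\Theta(X,Z)\cup\{X,Z\}$, with the osculators in their tree order, so $Y$ sits in one interval of the chain $X=X_0\,\dot{<}\,R_1\,\dot{<}\,X_2\,\dot{<}\,R_3\,\dot{<}\dots\dot{<}\,R_{N-1}\,\dot{<}\,X_N=Z$. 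If $Y$ lies between $X_{2k}$ and an adjacent osculator $R$, the Order property gives $d_Y(X_{2k},R)\ge M-\kappa$; in the neighbor case $\bbY_{c_*/2}(\calW,R)=\emptyset$, so Proposition~\ref{prop;osc_cvx} makes the corresponding translate of $\calW_{j_0}$ together with $R$ convex with constant $\le c_*-O(\kappa)\le M-\kappa$, placing $Y$ in that white cell, hence in $\calW'_{j_0}$; in the gap case one uses Lemma~\ref{lem;approx_gap} to project $Y$ onto the translate of $\calW_{j_0}$ within $\kappa$ and then the $\calL_{j_0}$‐convexity of $\calW$ (transferring $X_{2k}$ into the relevant coordinate by its rotation group) — exactly the computation already run inside the proof of Lemma~\ref{lem;calRisconvex} — which lands at threshold $c_*=\calL_{j_0+1}(j_0)$. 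If $Y$ lies between two consecutive osculators one argues symmetrically from both endpoints.

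It remains to drop the hypotheses $i=j=j_0$ and to handle the non‐full case. When $j\neq j_0$ or $i\neq j_0$, the tree does not see $X,Z,Y$ directly; one transfers $X$ and $Z$ into $\calW^s_{j_0}$ and its $G_{W'}$‐translates using the orbit estimates (if $\bbY_{j_0}\cap\Act(X)=\emptyset$, or its analogue for $Z$, then $\bbY^j_M(X,Z)=\emptyset$ and there is nothing to prove), losing $O(\kappa)$, and invokes the previous paragraph. The accounting works because for $j\neq j_0$ the target $\calL_{j_0+1}(j)$ already exceeds $\calL_{j_0}(j)$ by $20\kappa$, which through the chain of transfers absorbs the losses, and in the neighbor case the single target $c_*$ enjoys a full $c_*/2$ of slack over the starting convexity $\tfrac{c_*}{2}-20\kappa$. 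When $\calW$ is non‐full with $\calW_{j_0}=\emptyset$ one replaces ``$\calW$ is convex in coordinate $j_0$'' throughout by Lemma~\ref{lem;calRisconvex}, which gives that $\calW^s$ is $B$‐convex with $B=\tfrac{c_*}{2}+10\kappa\le\inf\calL$. I expect the main obstacle to be not the geometry but this bookkeeping: one must verify, case by case (neighbor vs.\ gap; $j=j_0$ vs.\ $j\neq j_0$; $i=j_0$ vs.\ not; $\calW_{j_0}$ empty vs.\ not), that every chain of triangle/Behrstock/transfer inequalities terminates below the prescribed threshold and that the degradation from $\calL_{j_0}$‐convexity of $\calW$ to the claimed convexity of $\calW'$ never exceeds the $20\kappa$ per coordinate the shifted vectors $\calL_j$ were designed to permit; a secondary, structural point is to make sure the localization in paragraph~3 lands in a \emph{white} cell (a translate of $\calW^s_{j_0}\subseteq\calW'_{j_0}$), which is what the ordering analysis inherited from the proof of Proposition~\ref{prop;principal_tree} delivers.
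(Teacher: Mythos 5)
Your overall strategy---transfer $X,Z,Y$ into the principal coordinate $j_0$, exploit the tree $T$ from Proposition~\ref{prop;principal_tree} and the Order and Monotonicity properties to locate $Y^{(j_0)}$ along the chain of osculators, and then descend to the original coordinate paying $O(\kappa)$ per transfer---is indeed the paper's strategy, and your treatment of the case where $Z$ lies in the same white cell as $X$ (and of the case where $Y$ falls between a white cell and an adjacent osculator) matches the paper's ``first two points of the main claim.''

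The gap is in the third configuration, which you dispose of with ``If $Y$ lies between two consecutive osculators one argues symmetrically from both endpoints.'' This does not work as stated. After locating $Y^{(j_0)}$ in the order, it can happen that $Y$ is squeezed between two osculators $R_i$ and $R_{i+1}$ that are \emph{both} of neighbor type, i.e.\ $R_{i+1}=\gamma R_i$ for some $\gamma\in G_W$, with a white cell in between. One cannot simply split $Y$ off onto the white cell, because the white cell is a set, not a point of $\bbY_{j_0}$, and $Y$ lives in coordinate $j$ which in general differs from $j_0$; the ordering only constrains $Y^{(j_0)}$, and nothing a priori places $Y$ in a translate of $\calW$. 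The paper resolves exactly this subcase by invoking the \emph{Greendlinger property} built into Definition~\ref{def;CW}: for the element $\gamma\in G_W$ one extracts a principal coordinate $j$ and a shortening-pair vertex $Z\in\calW_j$ with $d_Z^\ang(R,\gamma R)>\Theta_{Rot}-2\Theta_P-2\kappa$, then (after a Behrstock dichotomy between $d_{Y'}^\ang(R,Z)$ and $d_{Y'}^\ang(\gamma R,Z)$ and a reduction to ensure $Z\in\Act(Y')\cap\Act(R)$) shows that $Y'\in\bbY_{\frac{c_*}{2}+19\kappa}(\calW,R)$, contradicting that $R$ is a neighbor osculator, i.e.\ that $\bbY_{\frac{c_*}{2}}(\calW,R)=\emptyset$. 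Your proposal never invokes the Greendlinger property at all---a significant omission, since this property is put in the definition of a composite windmill precisely to make this step work---and ``arguing symmetrically'' cannot substitute for it. The same omission also makes the bookkeeping you defer to the final paragraph impossible to close, because the needed bound in this subcase does not come from transfer losses but from the $\Theta_{Rot}-2\Theta_P$ separation that the shortening pair provides.
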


       The case of $\calR=\emptyset$ is trivial, so we assume
       it is not empty.

       \begin{proof}

       If $\calR$ consists of the orbit of a neighbor, let  $A_j=
       c_*$ for all $j$.  If $\calR$
       consists of gaps, let  $A_j= \calL_{j_0}(j) + 20\kappa$ (which is
       less than $
       \calL_{j_0}(j+1)$). 

       Let $X,Z \in \calW'_i$, consider $Y\in \bbY_{A(j)}^{j} (X,Z)
       $.

       Here is our main claim. 
       
       We will show that $Y$ is a $G_{W'}$-translate of one of the
       following type of elements:
       \begin{itemize}
         \item $Y'$ for which there exists $ X_f, Z_f \in \calW_{j_0}$ such
           that $d_{Y'}^\ang(X_f, Z_f) \geq   A(j) - 10\kappa$; 
         \item $Y'$ for which there exists $ X_f \in \calW_{j_0}$, and $R$
           an osculator of $\calW$ in $\calW'_{j_0}$ such that $d_{Y'}^\ang(X_f, R) \geq   A(j) - 10\kappa$; 
         \item $Y'$ for which  there exists $R_1, R_2$ osculators of
           $\calW$  in $\calW'_{j_0}$ such that $d_{Y'}^\ang(R_1, R_2) \geq   A(j) - 10\kappa$ 
       \end{itemize}

       We will then finish the proof with this claim established, but
       before that we will prove the claim.

\newcommand{\Xjo}{X^{(j_0)}}
\newcommand{\Zjo}{Z^{(j_0)}}
\newcommand{\Yjo}{Y^{(j_0)}}

       {\it Transfer of $X$ and $Z$ to $\bbY_{j_0}$. } In $\calW'$,
       the groups $\Gamma_X$ and $\Gamma_Z$ preserve
       $\calW_{j_0}'$ which is not empty (it contains $\calR$). 
       Therefore, by Lemma \ref{lem;orbit_estimates} there are
       $\Xjo, \Zjo$ in $\calW'_{j_0}$ such that $d_Y^{\ang} (\Xjo,
       \Zjo) \geq A(j) - 4\kappa$.

       {\it The interval in $T$.}  Taking $\psi^{-1}$ of $\Xjo$ and of
       $\Zjo$ produces two vertices in the principal coordinate tree
       $T$ of Proposition \ref{prop;principal_tree}. More precisely, either one of $\Xjo,
       \Zjo$ is the image of a black vertex of $T$, or in the image of
       a white vertex of $T$. This thus give two vertices of $T$ that
       we (slightly abusively) denote   by  $\psi^{-1}(\Xjo),
       \psi^{-1}(\Zjo)$. 

       If these vertices are adjacent, we have achieved the second
       point of the 
       claim. If these vertices are the same, we have achieved the
       first point of the claim. If these vertices are different,  both black with
       only one white vertex in the interval, we have achieved the
       third point of the claim. 

       Thus, we may assume that there is at least one black vertex of
       $T$ in the open interval  $(\psi^{-1}(\Xjo),
       \psi^{-1}(\Zjo))$.  Let $R_1, \dots ,  R_N$ the images by
       $\psi$ of these black
       vertices, in order starting from the side of $\psi^{-1}(\Xjo)$.

       By Proposition \ref{prop;principal_tree}, we have for all $i$, $d_{R_i}(\Xjo, \Zjo)
       >\Theta_{Rot} - 2\Theta_P - \kappa$, which is
       $>50\kappa$.   
       
       {\it Reduction to the case where $R_i\in \Act(Y)$}

      If $Y$ is
      equal to one of the $R_i$ then we fall in the first possibility
      of the main claim.      Thus, let us assume
      that $Y$ is different from all the $R_i$. 
      
      We may assume that $Y$ is in $\Act(R_i)$  for all $i$. Indeed if it
      was not, one could use an element of $\Gamma_{R_i}$ to reduce
      the length of the path $p$, without changing the value of the
      projection distance $d^\ang_Y(X^{(j_0)}, Z^{(j_0)})$ since $ \Gamma_{R_i}$ leaves
      $d^\pi_Y$ invariant.

      {\it Transfer of $Y$ in $\bbY_{j_0}$}. We may apply Lemma
      \ref{lem;transfert} again, and find an element $\Yjo$ in $\bbY_{j_0}$ (far in an
      orbit of $\Gamma_Y$) such that, for all $i$,  one has
      $d_{R_i}^\ang ( Y, \Yjo)\leq 4\kappa$. 

      {\it Position of $\Yjo$ in the order}. 
      Fix $0<i\leq N$. Since $d_{R_i}(\Xjo, \Zjo) >50\kappa$, either
      $d_{R_i} (\Xjo, \Yjo)$ or   $d_{R_i} (\Yjo, \Zjo)$ is larger
      than $24\kappa$. 

      All $R_i$ are in $\bbY_{50\kappa}(\Xjo, \Yjo)$ therefore they
      satisfy the order property in this set, which coincide with the
      ordering of their indices. 
      By this order property and Behrstock
      inequality, if for some $i$ one has $d_{R_i}(\Yjo, \Xjo)
      >5\kappa$, then for all $i'<i$, one still has  $d_{R_i}(\Yjo, \Xjo)
      >5\kappa$.  Similarly if  $d_{R_i}(\Yjo, \Zjo)
      >5\kappa$ then for all greater $i''$ the same holds. 

       Therefore we have three cases.

       Either $d_{R_1}(\Yjo, \Xjo) \leq 5\kappa$ or $d_{R_N}(\Yjo,
       \Zjo) \leq 5\kappa$, or there exists $i\geq 1$, largest  such
       that $ d_{R_1}(\Yjo, \Xjo) > 5\kappa$ and $i<N$.

       By symmetry, and translation by an element of $G_{W'}$ the first
       and second case have same resolution. Let us treat the first
       one.  By triangle inequality, $d_{R_1}(\Zjo, \Yjo)
       >\Theta_{Rot} - 10\kappa - 2\Theta_P$ which is still greater than
       $20\kappa$.

       Going back to $Y$: $d_{R_1}^\ang (\Zjo, Y) >16\kappa$. By
       Behrstock inequality, $d_{Y}^\ang (\Zjo, R_1) <\kappa$, and
       finally by triangle inequality, $d_Y^\ang(\Xjo, R_1) \geq A(j)
       -2\kappa$. We are in the second point of the claim if $\Xjo$ is in a white
       vertex, and in the third point if it is a black vertex.

       We thus turn to the case where  there exists $i\geq 1$, largest  such
       that $ d_{R_1}(\Yjo, \Xjo) > 5\kappa$ and $i<N$.

       One has 
$$\begin{array}{ccl} 
d_{R_{i+1}} (\Yjo, \Zjo)  & > & \Theta_{Rot} -   2\Theta_P -10    \kappa \\ 
  d_{R_{i+1}}^\ang (Y, \Zjo)     & >  &  \Theta_{Rot} - 2\Theta_P -14   \kappa \\
d_Y^\ang(R_{i+1}, \Zjo) & \leq & \kappa \\
\end{array}
$$
and 
$$\begin{array}{ccl} 
d_{R_{i}} (\Yjo, \Xjo)  & \geq & 5    \kappa \\ 
  d_{R_{i}}^\ang (Y, \Xjo)     & \geq  &    \kappa \\
d_Y^\ang(R_{i}, \Xjo) & \leq & \kappa \\
\end{array}
$$

So, $d_Y^\ang (R_i, R_{i+1} ) \geq A(j)-4\kappa $. We have the third
point of the claim, and the claim is established.

We need to finish the proof of the lemma.  There are several cases to
treat. The easiest is when the first case of the claim occurs. 

In that case, if $j=j_0$, $Y'$ is actually a gap osculator, hence in
$\calW'_{j_0}$. If $j\neq j_0$, by convexity of $\calW$, it is in
$\calW_j$.  

Assume now that the second case occurs.

 If $R$ is of type neighbor, it simply contradicts Proposition
 \ref{prop;osc_cvx}.

 If $R$ is an osculator of type gap between $X_0, X_1$, and $j=j_0$,
 one easily gets that $R$ is an osculator of type gap between $X_f$
 and either $X_0$ or $X_1$ (any one for which $ d_R(Y',X_\epsilon)$ is larger
 than $\kappa$, and by triangular inequality, there must be at least
 one).  If $j\neq j_0$, we may use the same argument. $Y'\in\Act(R)$
 therefore   $d^\ang_R(Y',X_\epsilon)$ is larger
 than $\kappa$ for either $\epsilon=0$ or $1$. Then,
 $d^\ang_{Y'}(R,X_\epsilon) <\kappa$ and by triangular inequality,
 $d^\ang_{Y'}(X_f ,X_\epsilon) \geq A(j) -12\kappa (>\calL_{j_0}(j)) $.  It follows by
 convexity of $\calW$ that $Y'\in \calW_j$.

Finally, assume that the third case occurs.

Assume that $R_2$ is an osculator  of type  gap, between $X_0,X_1$. 
Then, again with the same
reasoning, $Y'\in \Act(R_2)$ and there is $\epsilon$ for which it is
in $\Act(X_\epsilon)$  and $d_{Y'}^\ang (R_2, X_\epsilon)$ is less
than $\kappa$. Thus $d_{Y'}^\ang (R_1, X_\epsilon) \geq A(j)-12\kappa$,
 and we are back to the case $2$ of the claim, with a slightly lower
 constant. The proof goes
 nevertheless through, and the desired conclusion holds.

Finally,       
 assume that $R_2$ is of type neighbor. Then   both $R_1, R_2$ are of type neighbor, and $R_2=
\gamma R_1$ for some $\gamma \in \G_W$. Let us rename $R_1=R$, call
$i=i(Y')$, and $j$ the principal coordinate of $\gamma$ (for the
Greendlinger property). Let $Z \in \calW_j$ be the vertex of a
shortening pair for $\gamma$ for which $Z\in \Act(Y') \cap
\Act(R)$ (there exists one, otherwise one can reduce the length of
$\gamma$ in its principal tree  by a
shortening pair at $Z$). Thus, $d_Z^\ang(R, \gamma R) >\Theta_{Rot} -
2\Theta_P -2\kappa$. 

Suppose $d_{Y'}^\ang(R, \gamma R) >  c_*-10\kappa$. Then, there are two possible
cases. Either $d_{Y'}^\ang(R,Z) > \frac{c_*}{2}- 6\kappa $ or $d_{Y'}^\ang(\gamma R,Z)
> \frac{c_*}{2} - 6\kappa$ (or both).

 In
the first case, $d_Z^\ang(R,Y') \leq \kappa$. Thus $d_Z^\ang(Y',\gamma
R) >\kappa$, and so $d_{Y'}^\ang(\gamma R, Z)< \kappa$. 

Recall that $Z\in\Act(R)\cap \Act(Y')$. Thus $d_{Y'}^\ang(Z,R) > c_*
-2\kappa$, and $Y'\in \bbY_{c_*-2\kappa}( Z,R  ) $. Now let $Z'$ any
other element of $\calW$ in  $\Act(R)\cap \Act(Y')$.  By $(\frac{c_*}{2}-20\kappa)$-convexity of
$\calW$,  one has  $d^\ang_{Y'} (Z,Z') \leq  \frac{c_*}{2}-20\kappa$
and therefore  
  $Y'\in
\bbY_{c_*-2\kappa  -\frac{c_*}{2}+21\kappa}( Z',R  ) $. In other words, $Y'
\in \bbY_{\frac{c_*}{2}+19\kappa}(\calW, R)$ and this contradicts the fact that $R$ is a neighbor.

In the second case, the situation is similar after composing by the automorphism
$\gamma^{-1}$.

\end{proof}

      \subsubsection{The unfolding is a windmill}

      \begin{prop} If $\calW= (\calW_1, \dots, \calW_m, G_W, j_0)$ is
        a composite windmill, and if $\calW'=  (\calW'_1, \dots, \calW'_m,
        G_{W'}, j_0+1)$  is an unfolding over an admissible set of osculators, 
        then $\calW'$ is a composite windmill.

        Moreover, the set $\calW'_*$ of the fifth point of the
        definition can be assumed to contain the set $\calW_*$ (in
        other words, $\calW'$ is constructed over $\calW$). 
      \end{prop}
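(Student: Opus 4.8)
The strategy is to check the axioms of Definition~\ref{def;CW} for $\calW'$ one at a time, feeding in the two results just proved. By Lemma~\ref{lem;emptyR} I may assume $\calR\neq\emptyset$, and when $\calW_{j_0}=\emptyset$ I would first replace $\calW$ by the intermediate non-full windmill $\calW^s$ of Lemma~\ref{lem;calRisconvex} (which has the same unfolding), so that $\calW^s_{j_0}\neq\emptyset$. That each $\calW'_i\subset\bbY_i$ is $G_{W'}$-invariant, and that $G_{W'}$ is generated by $\{\Gamma_Y:Y\in\bigcup_i\calW'_i\}$ (or the analogous set missing the coordinate $j_0+1$ in the non-full case), is immediate from Definition~\ref{def;unfolding}, since $G_{W'}$ preserves coordinates and $\calR\subset\calW'_{j_0}$. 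The convexity axiom, that $\bigcup_i\calW'_i$ be $\calL_{j_0+1}$-convex, is precisely Proposition~\ref{prop;unfolding_is_convex}: in the gap case it is the stated conclusion, and in the neighbour case $\calW'$ is $c_*$-convex, which gives $\calL_{j_0+1}$-convexity because every entry of $\calL_{j_0+1}$ is at least $\min\calL=c_*$.

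For the partially commutative presentation I would use the $G_{W'}$-tree $T$ of Proposition~\ref{prop;principal_tree}. Pick a $G_W$-transversal $\calR^t$ of $\calR$, and let $\widetilde{G_{W'}}$ be the fundamental group of the graph of groups with vertex groups $G_W$ and $\Gamma_R\times(G_W)_R$ $(R\in\calR^t)$, edge groups $(G_W)_R$, realised by $T$. There is a natural surjection $q:\widetilde{G_{W'}}\onto G_{W'}$, since $G_{W'}$ is generated by $G_W$ and the $\Gamma_R$, and by the composite rotating family axioms (Definition~\ref{def;CRF}) $\Gamma_R$ commutes with $(G_W)_R=\langle\Gamma_X:X\in\calW,\ X\notin\Act(R)\rangle$ and meets it trivially. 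The point is that $q$ is an isomorphism: the map $\psi:T\to\calP(\bbY_{j_0})$ of Proposition~\ref{prop;principal_tree} is injective and $q$-equivariant, so the Bass--Serre action of $\widetilde{G_{W'}}$ factors through $q$, and the estimate $d_{\psi(v)}(X_1,X_2)>\Theta_{Rot}-2\Theta_P-\kappa>0$ for a black vertex $v$ strictly between two white vertices shows that any reduced word of positive syllable length in this graph of groups moves the white vertex fixed by $G_W$ to a distinct subset of $\bbY_{j_0}$, hence is nontrivial in $G_{W'}$; with the injectivity of the vertex groups into $G_{W'}$ this forces $\ker q=1$. Then the usual graph-of-groups presentation is
$$ G_{W'}\ \simeq\ \big\langle\, G_W,\ (\Gamma_R)_{R\in\calR^t}\ \big|\ [\Gamma_R,(G_W)_R]=1\ \ (R\in\calR^t)\,\big\rangle, $$
and substituting the inductively given partially commutative presentation of $G_W$ over $\calW_*$, expanding each relation $[\Gamma_R,(G_W)_R]=1$ into commutators $[s_R,s_X]$ with $X\in\calW$, $X\notin\Act(R)$, and rewriting $\Gamma_X=u\Gamma_{X_0}u^{-1}$ ($X=uX_0$, $u$ a word over $\calW_*$) whenever $X\notin\calW_*$, produces a presentation over $\calW'_*:=\calW_*\cup\calR^t$ all of whose relators have the form $[s,ws'w^{-1}]$; one checks that such a word is a relator exactly when $wX'\notin\Act(X)$. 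Finally the relations $[\Gamma_Y,\Gamma_{Y'}]=1$ for $Y'\notin\Act(Y)$ and $\Gamma_{gY}=g\Gamma_Yg^{-1}$ hold in $G_{W'}$ by the composite rotating family axioms and so, $q$ being an isomorphism, are consequences of the above; appending all of them yields a presentation of exactly the required shape, with $\calW'_*\supset\calW_*$, so $\calW'$ is constructed over $\calW$.

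It remains to verify the Greendlinger property for $\calW'$, with the principal coordinate and principal tree of $\gamma$ defined by the recursion introduced after Proposition~\ref{prop;principal_tree}. Any $\gamma\in G_{W'}$ either is conjugate in $G_{W'}$ into $G_W$, or acts on $T$ with no fixed white vertex (being loxodromic on $T$, or fixing a black vertex with nontrivial rotational part). In the first case I invoke the Greendlinger property of $\calW$ inductively after conjugating, coordinates being $G_{W'}$-invariant; the only subtle point is when the returned coordinate is $j_0$ and the test point $X$ lies in $\calW'_{j_0}\setminus\calW_{j_0}$, handled by placing $X$ in a white or black vertex of $T$ fixed by the relevant conjugate of $G_W$ and transporting the inductive estimate through $\psi$. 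In the second case one sets $i(\gamma):=j_0$; for $X$ in a white vertex $w$ the vertex $\gamma w$ is a distinct white vertex (the bipartition is $G_{W'}$-invariant), so the black neighbour $v$ of $w$ on the segment $[w,\gamma w]$ satisfies $R:=\psi(v)\in\calW'_{j_0}$ and $d_R(X,\gamma X)>\Theta_{Rot}-2\Theta_P-\kappa$ by Proposition~\ref{prop;principal_tree}, while a rotation $\gamma_s\in\Gamma_R$ (which acts simply transitively on the edges at $v$) sending $\gamma X$ into a white vertex whose $T$-geodesic from $v$ begins along the edge $vw$ gives $d_R(X,\gamma_s\gamma X)\le2\Theta_P+3\kappa$ by the last estimate of that proposition, so $(R,\gamma_s)$ is a shortening pair; the case of $X$ in a black vertex reduces to the previous one or to $\gamma\in\Gamma_X$ via the splitting of the black-vertex stabiliser. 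I expect this Greendlinger verification to be the main obstacle --- reconciling the recursively defined principal coordinate of $\gamma$ with the quantifier ``for all $X\in\calW'_{i(\gamma)}$'', treating every position of $X$ among the vertices of $T$, and extracting the shortening pair with the precise constants --- whereas the convexity and the presentation are essentially a citation and an unwinding of definitions.
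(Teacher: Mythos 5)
Your proof follows the same route as the paper's: the first three axioms of Definition~\ref{def;CW} hold by construction, convexity is exactly Proposition~\ref{prop;unfolding_is_convex}, and the presentation and Greendlinger property both come out of the tree $T$ of Proposition~\ref{prop;principal_tree} via Bass--Serre duality and the two metric estimates it supplies. You supply considerably more detail than the published proof (which essentially just cites the two propositions and states the graph-of-groups decomposition), and you correctly identify the Greendlinger verification as the delicate part; your handling of the case ``$\gamma$ conjugate into $G_W$ with $X\in\calW'_{i(\gamma)}\setminus\calW_{i(\gamma)}$ in a non-principal coordinate'' is somewhat hand-waved, but the paper does not spell this out either, so the approach and level of rigor match.
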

 \begin{proof}
      The first three points follow by construction. The fourth point (convexity) is
      the result of Proposition \ref{prop;unfolding_is_convex}. The
      sixth point is a consequence of Proposition
      \ref{prop;principal_tree}.  The same proposition introduces an
      action of $G_{W'}$ on a tree $T$ which is Bass-Serre dual to  a
      presentation  of $G_{W'}$ as     the fundamental
      group of a graph of group, with one vertex $v_0$ carrying the group
      $G_W$ and the other vertices $v_{[R]}, [R]\in \calR/G_W$,
      adjacent to a single edge whose other end is
      $v_0$,  carrying the group $ \Gamma_R \times (G_W)_R $, if $R$
      is a representative of the orbit $[R]$.

\end{proof}

       \subsection{Towers of windmills, and accessibility}

       \subsubsection{Starting point}

       We start the process by selecting $\calW(0)$ to be  a maximal collection of
       mutually inactive elements in $\bbY_*$. Thus, whenever
       $\calW(0)_j \neq \emptyset$, it is reduced to a single point.

       We choose $j_0=1$. It is clear that $\calW(0)$ defines a composite windmill
       where for all $i$,  $\calW(0)_{i}$ is either empty or a singleton, and
       where $G_{W}$ is the direct product of the groups 
       $G_X$, for $X\in \calW(0)$ (there are at most $m$ direct
       factors).

       $\calW(0)$ is $\kappa$-convex,  and for all $R$, by maximality
       of $\calW(0)$, $\Act(R)\cap \calW(0) \neq \emptyset$.
       Recall that by choice, $c_*>25\kappa + 2\Theta$, hence by
       Proposition \ref{prop;firstguy},  there exists a neighbor
       osculator in $\bbY_{\frac{c_*}{2}+2m\kappa} (\calW(0), R)$.

       \subsubsection{The process}

       Recall that we assumed $\bbY_*$ to be
       countable.

       We will work with indices in the {\it set} of countable ordinals:
       we will define $\calW(k)$ for $k$ any countable ordinal (not
       necessarily a number). 
       We take the notation $$  \calW(k) = (\calW(k)_1, \dots, \calW(k)_m, G_{W(k)}, j_k).  $$ 
       Let us convene that  $\calW(k)\subset \calW(k')$ means that  for all $i\leq m, \calW(k)_i\subset \calW(k')_i$. This is not an order relation, however note that, for full windmills, if $\calW(k)\subset \calW(k') \subset \calW(k)$,  and if $\calW(k)$ is fixed, there are only $m$ possibilities for  $\calW(k')$ (corresponding to the values of $j_{k'}$).      
We will also write 
$\calW(k)\subsetneqq \calW(k')$ if $\calW(k)\subset \calW(k')$ and one of the inclusions $\calW(k)_i\subset \calW(k')_i$ is strict.

       We have chosen $\calW(0)$.  In order to define  $\calW(k)$ for $k$ any countable ordinal, we treat separately the case of $k$ a successor of some ordinal, and the case of $k$ a limit ordinal.

       For any countable ordinal $k$, we define  $\calW(k+1)$ to be the unfolding of $\calW(k)$  (as in Definition \ref{def;unfolding}) over an admissible set of osculators. Recall that if there is no gap osculator at all, one may need to choose a certain neighbor osculator to define a choice of admissible set of osculators. We could, but do not impose the choice.

Note that by  maximality of $\calW(0)$, Lemma \ref{lem;keepwalking} can be applied to show that such a choice is always possible for all $\calW(k)$.

      \begin{lemma}\label{lem;still_a_CWM}
       If  $\calW(k)$ is a composite windmill, then   
       $\calW(k+1)$ 
             is still a composite
       windmill, constructed over $\calW(k)$.

       \end{lemma}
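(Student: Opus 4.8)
The strategy is to reduce the statement entirely to the unfolding results already established. Recall that $\calW(k+1)$ is, by construction, the unfolding of $\calW(k)$ (Definition \ref{def;unfolding}) over a chosen admissible set of osculators $\calR$, so the first point to settle is that such a set always exists. Since the coordinate sets never shrink along the process (an unfolding only enlarges them, by Definition \ref{def;unfolding}), we have $\calW(0)\subset\calW(k)$, and hence for every $R\in\bbY_*$ the maximality of $\calW(0)$ gives $\Act(R)\cap\calW(k)\supseteq\Act(R)\cap\calW(0)\neq\emptyset$. This is exactly the hypothesis of Lemma \ref{lem;keepwalking}. If $\calW(k)$ already contains $\bbY_*$ then $\calW(k+1)=\calW(k)$ and there is nothing to prove. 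Otherwise, either $\calW(k)$ fails to be $(\frac{c_*}{2}-20\kappa)$-convex, in which case the set $\calR_{gap}$ of gap osculators in $\bbY_{j_k}$ is its (possibly empty) admissible set; or $\calW(k)$ is $(\frac{c_*}{2}-20\kappa)$-convex, and Lemma \ref{lem;keepwalking} produces a neighbor osculator, whose $G_{W(k)}$-orbit is then an admissible set.

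Next I would feed $\calR$ into the unfolding machinery. If $\calR=\emptyset$, Lemma \ref{lem;emptyR} gives directly that $(\calW(k)_1,\dots,\calW(k)_m,G_{W(k)},j_k+1)$ is a composite windmill; since neither the coordinate sets nor the representative set $\calW(k)_*$ change, it is trivially constructed over $\calW(k)$. If $\calR\neq\emptyset$, I would first note that every $\calW(k)$ arising in the process is full: $\calW(0)$ is full, and if $\calW(k)$ is full then $G_{W(k+1)}$ is generated by $\{\Gamma_Y : Y\in\bigcup_i\calW(k)_i\}\cup\{\Gamma_R : R\in\calR\}$, so it contains $\Gamma_{gY}=g\Gamma_Y g^{-1}$ for all $g\in G_{W(k+1)}$ and all $Y$ in a coordinate of $\calW(k)$, i.e.\ it is generated by all the rotation subgroups around points of $\bigcup_i\calW(k+1)_i$. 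Being full, $\calW(k)$ satisfies the hypotheses of Proposition \ref{prop;unfold_main}, which yields that $\calW(k+1)$ is a full composite windmill whose representative set $\calW(k+1)_*$ can be taken to contain $\calW(k)_*$ --- precisely the assertion that $\calW(k+1)$ is constructed over $\calW(k)$.

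The one place that requires care, and which has already been dealt with upstream, is when the principal coordinate is empty, i.e.\ $\calW(k)_{j_k}=\emptyset$; this can genuinely occur for a full windmill, because the maximal collection $\calW(0)$ may leave a coordinate unpopulated until an unfolding reaches it. It is exactly for this reason that Proposition \ref{prop;principal_tree} and Proposition \ref{prop;unfolding_is_convex} were stated so as to allow $\calW_{j_0}=\emptyset$, replacing $\calW_{j_0}$ by the intermediate non-full windmill $\calW^s$ of Lemma \ref{lem;calRisconvex}; since these two propositions are the substance of Proposition \ref{prop;unfold_main}, no additional argument is needed here. All the remaining clauses of Definition \ref{def;CW} for $\calW(k+1)$ --- the description of $G_{W(k+1)}$ and the $G_{W(k+1)}$-invariance of each $\calW(k+1)_i$ (immediate from Definition \ref{def;unfolding}), the shift of the principal coordinate to $j_k+1$, the correct convexity constant ($c_*$-convex, hence $\calL_{j_k+1}$-convex, in the neighbor case and $\calL_{j_k+1}$-convex in the gap case, by Proposition \ref{prop;unfolding_is_convex}), the partially commutative presentation, and the Greendlinger property (from Proposition \ref{prop;principal_tree}) --- are contained in the conclusions of these propositions, so Lemma \ref{lem;still_a_CWM} follows by assembling them.
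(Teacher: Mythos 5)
Your proof is correct and follows the same route as the paper's one-line argument: cite Lemma \ref{lem;emptyR} when the admissible set of osculators is empty, and Proposition \ref{prop;unfold_main} otherwise. The extra points you verify (existence of an admissible set via maximality of $\calW(0)$ and Lemma \ref{lem;keepwalking}, and that the windmills produced by the process remain full) are indeed needed, and the paper records them in the surrounding text rather than inside the proof itself.
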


\begin{proof} This follows from Proposition \ref{prop;unfold_main}
 if the set of
osculators is non-empty, and from Lemma \ref{lem;emptyR} otherwise.  
\end{proof}

We now define $\calW(\alpha)$ assuming $\alpha$ is a limit ordinal, and that all  $\calW(k)$, for $k<\alpha$ have been defined, and satisfy $ \calW(k) \subset \calW(k')  $ for all $k<k'$.

We  consider $\calW(\alpha)_i = \bigcup_{k<\alpha} \calW(k)_i$ for each $i\leq m$,  and  $ G_{W(\alpha)} = \bigcup_{k<\alpha} G_{W(k)} $,  and we set $j_\alpha = 1$.

\begin{lemma}\label{lem;still_a_CWM_again}
 If $\alpha$ is a limit  countable ordinal such that for all $k<\alpha$, $\calW(k)$ is a composite windmill, and that for all $k<\alpha$,   $\calW(k+1)$ is constructed over $\calW(k)$.  Then  $\calW(\alpha)$ is a composite windmill, constructed over $\calW(k)$, for all  $k<\alpha$.
\end{lemma}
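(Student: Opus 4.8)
The plan is to verify the six defining properties of a composite windmill for $\calW(\alpha)$ one at a time, exploiting that everything is an increasing union over the chain $\{\calW(k)\}_{k<\alpha}$ and that limit ordinals have cofinality allowing us to reduce statements involving finitely much data to a single stage $k<\alpha$. First I would dispense with the easy structural points: $G_{W(\alpha)} = \bigcup_{k<\alpha} G_{W(k)}$ is a subgroup (an increasing union of subgroups), it is generated by $\bigcup_{Y \in \bigcup_i \calW(\alpha)_i}\Gamma_Y$ because each generator already appears at some finite stage, each $\calW(\alpha)_i$ is $G_{W(\alpha)}$-invariant since for $g \in G_{W(k)}$ and $X \in \calW(k')_i$ with $k'' = \max(k,k')<\alpha$ we have $gX \in \calW(k''+1)_i \subset \calW(\alpha)_i$; we set $j_\alpha = 1$. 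So points one through three of Definition \ref{def;CW} hold, and $\calW(\alpha)$ is full if all the $\calW(k)$ are.

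Next I would handle convexity (the fourth point). We need $\bigcup_i \calW(\alpha)_i$ to be $\calL_{1}$-convex, i.e.\ $\calL_{j_\alpha}$-convex. Take $X, Z \in \calW(\alpha)_i$ with $i(X)=i(Z)=i$ and any $j$; we must show $\bbY^j_{\calL_1(j)}(X,Z) \subset \calW(\alpha)_j$. Both $X$ and $Z$ lie at some common stage $\calW(k)_i$ with $k<\alpha$. The point is that along the unfolding process the principal coordinate $j_k$ cycles through all residues modulo $m$, and by Proposition \ref{prop;unfolding_is_convex} each unfolding step produces a windmill which is at least $c_*$-convex (neighbor case) or $\calL_{j_k+1}$-convex (gap case); since $\calL_{j_k+1} \geq$ the relevant component bounds and $\min_j \calL_1(j) = c_*$, after running forward from $k$ through one more full cycle we reach a stage $k' < \alpha$ (using that $\alpha$ is a limit, so $k + m < \alpha$... one must be slightly careful here since $\alpha$ may be a limit of limits, but $k+1, k+2, \dots$ are all $<\alpha$ for a limit $\alpha$) at which $\calW(k')$ is $\calL_{1}$-convex, or at least convex enough that $\bbY^j_{\calL_1(j)}(X,Z) \subset \calW(k')_j \subset \calW(\alpha)_j$. \textbf{This is the step I expect to be the main obstacle}: one must track which convexity constant is available at stage $\alpha$ and confirm it is exactly $\calL_{j_\alpha} = \calL_1$ and not something weaker, reconciling the ``plate-spinning'' bookkeeping — the fact that at stage $\alpha$ with $j_\alpha = 1$ one needs $\calL_1$-convexity, and showing that the increasing union over a cofinal set of stages where the relevant coordinates have been refreshed does deliver this. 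In the worst case one shows instead that $\calW(\alpha)$ is $\calL_1$-convex by a direct argument: if $\bbY^j_{\calL_1(j)}(X,Z) \ni Y$ then $Y$ appears in $\calW(k'')_j$ for the first stage after both $X,Z$ whose principal coordinate makes $\calL_{j_{k''}}(j) \leq \calL_1(j)$ — equivalently one invokes that for every $k$ with $j_k = 1$ the windmill $\calW(k)$ is genuinely $\calL_1$-convex by Lemma \ref{lem;still_a_CWM}, and such $k$ are cofinal in $\alpha$ since $j_\alpha$ is reset to $1$ only at limits but $j_k$ cycles so $j_k=1$ infinitely often below $\alpha$.

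For the fifth point (partially commutative presentation), I would take $\calW(\alpha)_* = \bigcup_{k<\alpha}\calW(k)_*$ and $\calS = \bigcup_{X \in \calW(\alpha)_*}(\text{generators of }\Gamma_X)$; this contains $\calW(k)_*$ for every $k<\alpha$, so $\calW(\alpha)$ is constructed over each $\calW(k)$ as required. Since $G_{W(\alpha)}$ is the direct limit of the $G_{W(k)}$ along injections respecting the presentations (each $\calW(k+1)$ is constructed over $\calW(k)$, so its presentation extends the previous one by Proposition \ref{prop;unfold_main}), and a direct limit of groups presented compatibly is presented by the union of the generators and relators, $G_{W(\alpha)}$ has the presentation $\langle \calS \mid \calR\rangle$ with $\calR = \bigcup_{k<\alpha}\calR_k$; each relator is a commutator $[s, ws'w^{-1}]$ as required, and the biconditional ``$[s,ws'w^{-1}]\in\calR$ iff $wX' \notin \Act(X)$'' passes to the union because it holds at each stage and the condition $wX'\notin\Act(X)$ does not depend on the stage. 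For the sixth point (Greendlinger property), let $\gamma \in G_{W(\alpha)}$; then $\gamma \in G_{W(k)}$ for some $k<\alpha$, so it has a principal coordinate $i(\gamma)$ and, for each $X \in \calW(k)_{i(\gamma)}$, either $\gamma \in \Gamma_X$ or a shortening pair $(R,\gamma_s)$ with $R \in \calW(k)_{i(\gamma)} \subset \calW(\alpha)_{i(\gamma)}$ and the two distance estimates. It remains to check the Greendlinger property at the new points $X \in \calW(\alpha)_{i(\gamma)}\setminus\calW(k)_{i(\gamma)}$; such an $X$ lies in $\calW(k')_{i(\gamma)}$ for some $k \leq k' < \alpha$, and applying the Greendlinger property of the windmill $\calW(k')$ (which, by Lemma \ref{lem;still_a_CWM}, holds, and in which $\gamma \in G_{W(k)} \subset G_{W(k')}$) gives the shortening pair at $X$, with $R \in \calW(k')_{i(\gamma)} \subset \calW(\alpha)_{i(\gamma)}$; the estimates are the ones in Definition \ref{def;CW} and are preserved. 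This establishes all six points, so $\calW(\alpha)$ is a composite windmill constructed over every $\calW(k)$, $k<\alpha$.
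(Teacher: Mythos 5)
Your proof is correct and matches the paper's direct-limit approach: the paper treats the first four points and the Greendlinger property as "easily checked after taking a direct union" and concentrates on the fifth point (the presentation), establishing it via a smallest-counterexample reformulation of the same observation you make, namely that a direct limit of compatibly constructed presentations is presented by the union of generators and relators. Your cofinality argument for $\calL_1$-convexity --- using that stages with $j_k=1$ (hence $\calL_1$-convex windmills, by Lemma \ref{lem;still_a_CWM}) are cofinal below a limit ordinal --- is the right way to fill in the bookkeeping subtlety, arising from the cyclic shifting of $\calL_{j_k}$, that the paper leaves implicit.
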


\begin{proof}
One easily check that all the points, except possibly the fifth (on the partially commutative presentation)  of the definition \ref{def;CW} of composite windmill are satisied after taking a direct union. Assume that the fifth point is not satisfied. Consider then $\alpha_0$ the smallest ordinal such that this point fails.  $\alpha_0$ is  a limit ordinal (otherwise
 Lemma \ref{lem;still_a_CWM} says that  $\calW(\alpha_0)$ is a composite windmill constructed over earlier  $\calW(k)$).  Fix $k_0<\alpha_0$. For all $k<k_0$,  $\calW(k)$ is contained in $\calW(k_0)$.

Note that by definition,  for each $i\leq m$,   $$\calW(\alpha_0)_i = \bigcup_{k_0 <k<\alpha_0} \calW(k)_i, \hbox{ and } \,  G_{W(\alpha_0)} = \bigcup_{k_0< k<\alpha_0} G_{W(k)}.$$

Since for all $k'>k$ less than $\alpha_0$,  $\calW(k')$ is constructed over  $\calW(k)$,  we obtain a presentation of $ G_{W(\alpha_0)}$ by increasing union of the generating sets of  $G_{W(k)}$ (each of which contains that of   $G_{W(k_0)}$),     and by increasing union of the relators of  $G_{W(k)} $. The fifth point of Definition \ref{def;CW}  is then satisfied by  $\calW(\alpha_0)$, and it is a composite windmill constructed over $\calW(k_0)$. Since this is true for all $k_0<\alpha_0$, we obtain a contradiction with the definition of $\alpha_0$. 
\end{proof}

\subsubsection{Accessibility}

\begin{lemma} \label{lem;end_in_countable_time}

 Let $\calI$ be the set of countable ordinals $k$ such that $\forall k'<k, \calW(k')\subsetneqq  \calW(k)$. Then $\calI$  is countable. Moreover, for each $k_1, k_2$ in $\calI$, consecutive in  $\calI$,  there are at most $m$ ordinals between  $k_1$ and $k_2$.
\end{lemma}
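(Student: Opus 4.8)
\textbf{Proof plan for Lemma~\ref{lem;end_in_countable_time}.}
The plan is to track, for each ordinal $k \in \calI$ (a "genuine growth" step), a piece of new data in $\bbY_*$ that is added to the windmill and never removed later, and to produce an injection from $\calI$ into $\bbY_* \times \{1,\dots,m\}$ (or some other explicitly countable set), whence $\calI$ is countable. Concretely: if $k\in\calI$, then by definition $\calW(k) \supsetneqq \calW(k')$ for every $k'<k$, so between some $k'<k$ and $k$ at least one set $\calW(\cdot)_i$ has strictly grown; I want to pin this growth on a single new element $Y(k) \in \bbY_*$ (with its coordinate $i$) that first enters the windmill ``at stage $k$''. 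Since each $Y \in \bbY_*$ can enter only once (the sequence $\calW(\cdot)$ is monotone increasing in each coordinate by Lemmas~\ref{lem;still_a_CWM} and \ref{lem;still_a_CWM_again}), the map $k \mapsto (Y(k), i(Y(k)))$ will be injective on a cofinal part of $\calI$, and since $\bbY_*$ is countable, $\calI$ is countable.

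The one subtlety is that $\calI$ was defined by comparison with \emph{all} smaller ordinals simultaneously ($\forall k'<k,\ \calW(k')\subsetneqq\calW(k)$), so I must be a little careful that the ``new element at stage $k$'' is well-defined and that distinct elements of $\calI$ genuinely get distinct new elements. For a limit ordinal $\alpha\in\calI$: $\calW(\alpha) = \bigcup_{k<\alpha}\calW(k)$, and the condition $\calW(k')\subsetneqq\calW(\alpha)$ for all $k'<\alpha$ forces the union to be strictly larger than every term, hence there are cofinally many successor steps $k+1<\alpha$ at which a new element was added; I can pick the element $Y(\alpha)$ to be \emph{any} element of $\calW(\alpha)\setminus\bigcup_{k<\alpha_0}\calW(k)$ for a fixed $\alpha_0<\alpha$ — but to get injectivity cleanly it is easier to argue as follows. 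Consider the function $k\mapsto (\calW(k)_1,\dots,\calW(k)_m)$ as a non-decreasing function from countable ordinals to subsets of the countable set $\bbY_*$; a strictly increasing ordinal-indexed chain of subsets of a countable set has countable length (any such chain embeds, via choosing a new element at each strict increase, into the countable set $\bbY_*$, since there is no uncountable strictly increasing chain of subsets of a countable set). Since $\calI$ is precisely the set of stages at which the chain is strictly above all earlier stages, and in particular strictly above its immediate predecessors, the order type of $\calI$ is at most the length of that strictly increasing chain, which is countable. This is the ``main obstacle'' and it is really just the standard fact that a countable set has no uncountable strictly increasing chain of subsets; the work is only in phrasing it against the $\subsetneqq$-definition of $\calI$.

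For the second assertion: suppose $k_1 < k_2$ are consecutive in $\calI$. The point is that between two consecutive elements of $\calI$ nothing strictly grows for ``too long'': for every $k$ with $k_1 < k < k_2$, by definition of $\calI$ we have $k\notin\calI$, so there is some $k'<k$ with $\calW(k')=\calW(k)$ (as subsets in all coordinates). I claim $\calW(k)=\calW(k_1)$ for all such $k$ up to the point where a genuine unfolding adds something, and that a full round of unfoldings — one per coordinate, cycling $j_0 \mapsto j_0+1 \bmod m$ — takes at most $m$ steps before either the windmill is forced to grow (landing us at $k_2$) or nothing will ever grow again (impossible if $k_2$ exists). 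More precisely: by Lemma~\ref{lem;keepwalking}, if $\calW(k)$ is $(\tfrac{c_*}{2}-20\kappa)$-convex and does not contain $\bbY_*$ then it has a neighbor osculator, whose orbit is a non-empty admissible set, and unfolding over a non-empty admissible set adds new elements; if $\calW(k)$ is not $(\tfrac{c_*}{2}-20\kappa)$-convex then it has a gap osculator (first part of Lemma~\ref{lem;keepwalking}), but gap osculators must lie in coordinate $j_k$, and if none does the admissible set is empty and the unfolding merely advances $j_0$. So from $\calW(k_1)$ onward, each step either grows the windmill (ending the interval) or just rotates $j_0$; after at most $m$ rotations $j_0$ has cycled through all coordinates, and since $\calW(k_1)\neq\bbY_*$ (else $k_2$ would not exist, as $\calW$ stabilizes once it equals $\bbY_*$) convexity together with Lemma~\ref{lem;keepwalking} guarantees a neighbor osculator in the then-current coordinate, forcing strict growth within those $m$ steps. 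Hence $k_2 \le k_1 + m$, i.e. at most $m$ ordinals lie strictly between $k_1$ and $k_2$.

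Combining: $\calI$ is countable, and consecutive elements of $\calI$ are at most $m$ apart; the main obstacle is simply the bookkeeping that reconciles the ``$\forall k'<k$'' definition of $\calI$ with the ``new element added'' argument, which is dispatched by the no-uncountable-strictly-increasing-chain fact for subsets of the countable set $\bbY_*$.
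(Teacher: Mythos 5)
Your proof is essentially correct and follows the same approach as the paper's. For the countability claim, the paper associates to each non-maximal $k\in\calI$ its successor $s(k)$ in $\calI$ and a witness $X_k\in\calW(s(k))\setminus\calW(k)$, obtaining an injection $\calI\to\bbY_*$; your appeal to the general fact that a well-ordered strictly increasing chain of subsets of a countable set has countable length is exactly the same argument in slightly more abstract packaging. For the bound by $m$, the paper argues by pigeonhole: if $m+1$ consecutive ordinals were outside $\calI$ below some $k_t\in\calI$, then all $m+1$ corresponding sets $\calW(\cdot)$ agree, so (since the $j_0$-coordinate cycles modulo $m$) two of them agree as full windmill data, whence the process stagnates forever and $k_t$ cannot be in $\calI$. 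Your version unpacks this: no growth in a step means the admissible osculator set was empty, $j_0$ advances by one each step, after $m$ steps every coordinate has been visited, and Lemma~\ref{lem;keepwalking} then produces either a gap osculator in the current coordinate (non-convex case) or a neighbor osculator (convex case), both yielding a non-empty admissible set and hence growth. This is the same underlying observation — $m$ consecutive trivial unfoldings exhaust all coordinates and force a contradiction via Lemma~\ref{lem;keepwalking} — and if anything you spell out a step the paper leaves implicit (that a non-empty admissible set of osculators necessarily enlarges the windmill). The only blemish is that your final sentence of the second part ("convexity together with Lemma~\ref{lem;keepwalking} guarantees a neighbor osculator") reads as if only the convex subcase matters, whereas your earlier discussion of gap osculators already treated the non-convex subcase; the two halves need to be read together for the argument to be complete.
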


\begin{proof}   For each $k\in \calI$, unless it is its maximal element, one can associate  its successor $s(k)$ in $\calI$, and therefore an element $X_{k}$ in $\bbY_*$ in $\calW(s(k))$ but not in  $\calW(k)$.  The assignation of $X_k$ is obviouly injective on $\calI$, and $\bbY_*$ is countable, thus $\calI$ is countable.

For the second assertion, assume that  there are  $m+1$ consecutive  countable ordinals $k_1, \dots, k_{m+1}$ outside $\calI$, all less than some $k_{t}\in \calI$. Then   by the pigeonhole argument, for two of them, $k, k'$, one has  $\calW(k) = \calW(k')$. Thus, by the rules of construction of $\calW(k+1)$, one has   $\calW(k) \subset  \calW(k+r) \subset \calW(k)$ for all $r\in \mathbb{N}$, ro equivalently,  for all $r$,   $\calW(k+r+1) \subset  \calW(k+r) \subset \calW(k+r+1)$.
  Since we take direct limits for limit ordinals, this holds also for all $r$ countable ordinal. However $k_{t}$ is a countable ordinal, and therefore     $\calW(k_{t}+1) \subset  \calW(k_{t}) \subset \calW(k_{t}+1)$,     contradicting that $k_{t} \in \calI$.

\end{proof}

\begin{lemma}\label{lem;the_top_k}
 There is a countable ordinal $k_{top}$, such that  $ \bbY_*\subset \calW(k_{top}) $.
\end{lemma}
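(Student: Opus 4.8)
The plan is to show that the coordinatewise nondecreasing transfinite sequence $(\calW(k))_k$ stabilises at some countable ordinal, and then to prove that a stabilised composite windmill must already contain all of $\bbY_*$.

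First I would record monotonicity: $\calW(k)\subset\calW(k+1)$ because $\calW(k+1)$ is constructed over $\calW(k)$ (Lemma~\ref{lem;still_a_CWM}), while $\calW(\lambda)=\bigcup_{k<\lambda}\calW(k)$ at limit ordinals; transfinite induction then gives $\calW(k)\subset\calW(k')$ whenever $k\le k'$. Next I would invoke Lemma~\ref{lem;end_in_countable_time}: the set $\calI$ is countable, so $\beta:=\sup\calI$ is a countable ordinal. I claim $\calW(\cdot)$ is eventually constant, equal to $\calW(\beta)$ on $[\beta,\omega_1)$ as a family of underlying sets (only the principal index $j_k$ keeps cycling). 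Indeed, for every $k>\beta$ we have $k\notin\calI$, hence some $k'<k$ with $\calW(k')=\calW(k)$; at a successor stage $k=k''+1$ this forces $\calW(k'')=\calW(k)$ via the sandwich $\calW(k')\subset\calW(k'')\subset\calW(k)=\calW(k')$ (valid since $k'\le k''$), and at limit stages the direct-limit definition propagates the equality, so a transfinite induction starting at $\beta$ yields $\calW(k)=\calW(\beta)$ for all countable $k\ge\beta$.

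It remains to prove $\bbY_*\subset\calW(\beta)$; suppose not. Over the $m$ consecutive stages $\beta,\beta+1,\dots,\beta+m-1$ the underlying family $\calW(\beta)_1,\dots,\calW(\beta)_m$ is unchanged, while the principal index runs over all of $\{1,\dots,m\}$. Since being $(\tfrac{c_*}{2}-20\kappa)$-convex is a property of the family of sets alone, I would distinguish two cases. If $\calW(\beta)$ is $(\tfrac{c_*}{2}-20\kappa)$-convex, then, using $\calW(0)\subset\calW(\beta)$ together with $\Act(R)\cap\calW(0)\neq\emptyset$ for all $R$ (maximality of $\calW(0)$), Lemma~\ref{lem;keepwalking} produces a neighbor osculator $R_0$, in the principal coordinate of the current stage, outside $\calW(\beta)$; the admissible set chosen there is the orbit $G_W R_0$, which is non-empty. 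If $\calW(\beta)$ is not $(\tfrac{c_*}{2}-20\kappa)$-convex, then by Lemma~\ref{lem;keepwalking} it has a gap osculator; replacing $\beta$ by $\beta+r$ for a suitable $r<m$ if necessary, we may assume this gap osculator $R_0$ lies in the current principal coordinate, so the admissible set $\calR_{gap}$ at that stage is non-empty. In either case, at a stage $k_0\in[\beta,\beta+m)$ there is a non-empty admissible set of osculators containing an osculator $R_0$ with $R_0\in\bbY_{j_0}\setminus\calW(\beta)_{j_0}$, where $j_0$ is the principal index at stage $k_0$.

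The crux — and the step I expect to be the most delicate to write carefully — is to deduce that the unfolding at stage $k_0$ strictly enlarges the family, contradicting stability on $[\beta,\omega_1)$. If $\calW(\beta)_{j_0}=\emptyset$, this is immediate: by Proposition~\ref{prop;principal_tree} the unfolded $j_0$-coordinate contains $\calW^s_{j_0}=\calR$, which is non-empty, so $\calW(k_0+1)_{j_0}\neq\emptyset=\calW(\beta)_{j_0}$. If $\calW(\beta)_{j_0}\neq\emptyset$, I would pick $X\in\calW(\beta)_{j_0}$ and use proper isotropy of $\Gamma_{R_0}$ to choose $\gamma\in\Gamma_{R_0}$ with $d_{R_0}(X,\gamma X)>\calL_{j_0}(j_0)$; then $R_0\in\bbY^{j_0}_{\calL_{j_0}(j_0)}(X,\gamma X)$, so if $\Gamma_{R_0}$ preserved $\calW(\beta)_{j_0}$ the $\calL_{j_0}$-convexity of the composite windmill of principal index $j_0$ would force the osculator $R_0$ into $\calW(\beta)_{j_0}$, which is absurd. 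Hence $\Gamma_{R_0}\subset G_{W'}$ moves a point of $\calW(\beta)_{j_0}$ out of it, so $\calW(k_0+1)_{j_0}=G_{W'}\cdot\calW(\beta)_{j_0}\supsetneq\calW(\beta)_{j_0}$. Either way the sequence strictly grows after a stage $\ge\beta$, contradicting its stability there. Therefore $\bbY_*\subset\calW(\beta)$, and $k_{top}=\beta$ works.
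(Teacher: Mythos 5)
Your proof is correct and follows the same basic strategy as the paper: take $\beta=\sup\calI$ (countable by Lemma~\ref{lem;end_in_countable_time}) and derive a contradiction from $\bbY_*\not\subset\calW(\beta)$. The paper's own argument is considerably more compressed, and in fact your version supplies two steps that the paper's proof glosses over or states imprecisely. First, the paper does not spell out why the windmill family is constant on $[\beta,\omega_1)$ as a family of subsets; your transfinite induction via the sandwich $\calW(k')\subset\calW(k'')\subset\calW(k)=\calW(k')$ does this cleanly. Second, and more importantly, the paper's proof asserts ``it follows from Lemma~\ref{lem;keepwalking} that $\calW(k_{top})$ is not $(\tfrac{c_*}{2}-20\kappa)$-convex,'' which is not what the lemma says (the lemma gives a dichotomy: gap osculator in the non-convex case, neighbor osculator in the convex case), and it never explains why the presence of an osculator forces a strict enlargement after unfolding. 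Your ``crux'' paragraph -- using proper isotropy of $\Gamma_{R_0}$ to push a point of $\calW(\beta)_{j_0}$ outside it via $\calL_{j_0}$-convexity, and handling $\calW(\beta)_{j_0}=\emptyset$ via Proposition~\ref{prop;principal_tree} -- is exactly the missing justification. One small caveat: Lemma~\ref{lem;keepwalking} produces an osculator, but not automatically in the current principal coordinate; you acknowledge this for the gap case (shifting by $r<m$), and the same remark should be made explicitly for the neighbor case, which your set-up already accommodates since the principal index cycles through all of $\{1,\dots,m\}$ on $[\beta,\beta+m)$ while the underlying family is frozen.
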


\begin{proof}  By Lemma \ref{lem;end_in_countable_time}, the suppremum of $\calI$ is still a countable ordinal. Call  $k_{top}$ is this ordinal, $\calW(k_{top}) $ is thus well defined. Assume that  $ \bbY_* \not\subset \calW(k_{top}) $. Then it follows from Lemma \ref{lem;keepwalking}  
that  $\calW(k_{top}) $ is not $(\frac{c_*}{2}-20\kappa)$-convex. Therefore, there is a gap osculator in one of the coordinates, and this coordinate is reached while $r\leq m$. This is a contradiction on the definition of  $k_{top}$. Thus,   $ \bbY_* \subset \calW(k_{top}) $.

 \end{proof}

       \subsection{End of the proof  Theorems \ref{theo;main} and \ref{theo;mainGreendlingerLemma}}

       Consider $\calW(k_{top})$ from Lemma \ref{lem;the_top_k}. 
 Assume it is not a composite windmill. Then there is a smallest ordinal $k_1$ such that $\calW(k_{1})$ is not a composite windmill. If $k_1$ is not a limit ordinal, it is of the form $k_0 +1$ for $k_0$ such that $\calW(k_{0})$ is  a composite windmill. Lemma \ref{lem;still_a_CWM} concludes a contradiction. If $k_1$ is a limit ordinal, then  
  Lemma \ref{lem;still_a_CWM_again} concludes a contradiction. 
Thus  $\calW(k_{top})$ is a composite windmill.

  Since it contains all elements of $\bbY_*$, the statement of the Theorems \ref{theo;main} and  \ref{theo;mainGreendlingerLemma} follow  from the definition of composite windmill.

\section{Conclusion, application to Dehn twists, and Theorem \ref{theo;intro}}    

Let $\Sigma$ be an orientable closed surface of genus greater than
$2$. Consider $\MCG$ its Mapping Class Group. 

Bestvina Bromberg and Fujiwara produced a finite coloring of the set
of simple closed curves of $\Sigma$ such that two curves of same color
intersect, and a finite-index normal subgroup
$G_0$ of $\MCG$ that preserves the coloring. $G_0$ is called the color
preserving group. After refinement of the colors, we actually may
assume that the colors are in correspondance with the cosets of
$G_0$. We denote the colors by $\{1, \dots, m\}$.

Let $c$  and $c'$ be  simple closed curves. If they intersect, the
projection of $c'$ on $c$ is the family of elements in the arc complex
of the annulus around $c$ (that is the  cover of $\Sigma$ associated
to $c$) that come from lifts of $c'$. They are all disjoint. If $c''$
is another simple closed curve intersecting $c$, $d^\pi_c(c',c'')$ is
the diameter in the curve graph of the union of the projections of $c'$ and $c''$
on the annulus around $c$.

$d^\pi$ defines a composite projection system on the set of all
(homotopy classes of) simple
closed curves. Indeed, let $\Act(c)$ be the set of curves intersecting
$c$.  Clearly $d^\pi_c$ is symmetric,  and satisfies the
separation. The symetry in action, and the closeness in inaction are
also direct consequences of definitions. The finite filling property
is a consequence of the fact that   all sequences of subsurfaces up to isotopy,
increasing under inclusion, are eventually stationnary.   $d^\pi_c$ satisfies the
triangle inequality since it is a diameter of projections,  and the Behrstock inequality
\cite{Beh06}, see also \cite{Mang10} \cite{Mang13}. The properness is ensured by \cite[Lemma 5.3]{BBF}

We can now define two composite projection systems with composite
rotating families. The first one is defined on
$\bbY_*$ is the set $\mathfrak{S}$  of {\it all}  homotopy classes of simple closed curves of $\Sigma$.

Let us define $\bbY_i$ to be the subset of this set of simple closed
curve of color $i$ in the Bestvina-Bromberg-Fujiwara coloring, and
$\bbY_*$ their union. It is, as we just said, a composite projection
system on which $G_0$ acts by automorphisms. 

Performing the construction of \cite{BBF} and the choices as after Definition \ref{def;CPS}, we have
constants $\Theta, \kappa, c_*, \Theta_P, \Theta_{Rot}$.

We select $N_1$ such that all $N_1$-powers of Dehn
twists in $\MCG$ are in $G_0$. This is possible since there are only
finitely many $\MCG$-orbits of simple closed curves in $\Sigma$, and
$G_0$ has finite index. Then we select $N_2$ a multiple of $N_1$ such
that for all simple closed curve $c$, the Dehn twist $\tau_c^{N_2}$
around $c$ satisfies that $d_c (c', \tau^{N_2} c') >\Theta_{Rot}+2\Theta_P$ if
$c'$ is a curve of the same color than $c$ (hence intersecting $c$).
Since $d_c$ is comparable with $d^\pi_c$, by definition of the latter,
there exists such an exponent $N_2$.  Then it follows that, for all
$k\in \bbN$,  the
collection $\{\Gamma_c=\langle \tau_c^{kN_2}\rangle,\, c\in \mathfrak{S} \}$,
 is a composite
rotating family.

The second composite projection system is a sub-system, invariant for
$G_0$,  provided by the $\MCG$-orbit of a  
simple closed curve $c_0 \in \mathfrak{S}$. Namely, the composite rotating
family  is the collection
$\{\Gamma_c,\, c\in   (\MCG c_0) \subset \mathfrak{S} \}$.

It is straightforward that both families are composite rotating
families. 

One can then apply  Theorem  \ref{theo;main}. In the first case, one
obtains that the group generated by the $kN_2$-th powers of all Dehn
twists has a partially commutative presentation, which is the second
point of  Theorem \ref{theo;intro}. In the case of the second
composite rotating family,
one obtains that the group generated by all $kN_2$-th powers of all Dehn
twists that are $\MCG$-conjugated to $\tau_{c_0}$ has a partially commutative
presentation. This latter group is the normal closure of
$\tau_{c_0}^{kN_2}$ in $\MCG$.  We therefore obtained Theorem \ref{theo;intro}.

\section*{Acknowledgements}

The present work has been mostly developed during the visit of the
author to the Mathematical Science Research Institute in Berkeley,
during the thematic semester on Geometric Group Theory of the Fall
2016.  The author is supported by the Institut Universitaire de
France. 

I wish to thank M. Bestvina, K. Bromberg, 
K. Fujiwara,  J. Mangahas, J. Manning, and A. Sisto for discussions,  and  J. Tao,
and S. Dowdall for organising an influencial working seminar in
MSRI.    After I talked in MSRI on a first version of this work,
 which was performed on  cone-offs of the blown-up projection
 complexes of \cite{BBF}, and was more in
 line with \cite[\S 5]{DGO}, J. Mangahas
 suggested  that I work directly in the language of
projection complexes, which is indeed more natural for this situation,
and allows a similar argument; this choice is in line
with one of her work in progress with M. Clay and D. Margalit.  
I thank  T. Brendle and D. Margalit for suggesting relevant references. I finally thank the anonymous  referee  whose remarks helped to improve the paper.

{\small

\noindent
{\sc  Institut Fourier, Univ. Grenoble Alpes, cnrs, 38000 Grenoble, France}\\
{\tt e-mail: francois.dahmani@univ-grenoble-alpes.fr} 
}
\end{document}